\pgfplotsset{compat=1.18}
\pgfplotsset{soldot/.style={color=black,only marks,mark=*}} \pgfplotsset{holdot/.style={color=black,fill=white,only marks,mark=*}}
\pgfplotsset{bluedot/.style={color=blue,fill=blue,only marks,mark=*}}
\newtheorem{thm}{Theorem}[section]
\newtheorem{lem}[thm]{Lemma}
\newtheorem{prop}[thm]{Proposition}
\newtheorem{cor}[thm]{Corollary}
\newtheorem{assumption}[thm]{Assumption}
\theoremstyle{definition}
\newtheorem{defn}[thm]{Definition}
\theoremstyle{remark}
\newtheorem{remark}[thm]{Remark}
\newtheorem{example}[thm]{Example}
\numberwithin{equation}{section}
\numberwithin{figure}{section}
 \newcommand{\N}{{\mathbb N}}
 \newcommand{\R}{{\mathbb R}}
 \newcommand{\Cont}{{\mathcal C}}
\newcommand{\gtm}{{\mathfrak m}}
\newcommand{\Ff}{{\EuScript F}}
\newcommand{\Pp}{{\EuScript P}}
\newcommand{\Ss}{{\EuScript S}}
\newcommand{\Tt}{{\EuScript T}}
\newcommand{\Bb}{{\EuScript B}}
\newcommand{\Cc}{{\EuScript C}}
\newcommand{\Qq}{{\EuScript Q}}
\newcommand{\Hh}{{\EuScript H}}
\newcommand{\Rr}{{\EuScript R}}
\newcommand{\Jj}{{\EuScript J}}
\newcommand{\Sing}{\operatorname{Sing}}
\newcommand{\Int}{\operatorname{Int}}
\newcommand{\cl}{\operatorname{Cl}}
\newcommand{\dist}{\operatorname{dist}}
\newcommand{\id}{\operatorname{id}}
\newcommand{\zar}{\operatorname{zar}}
\newcommand{\Sth}{\operatorname{Sth}}
\newcommand{\x}{{\tt x}} \newcommand{\y}{{\tt y}} 
\newcommand{\z}{{\tt z}} \renewcommand{\t}{{\tt t}}
\newcommand{\s}{{\tt s}}
\newcommand{\veps}{\varepsilon}
\newcommand{\ol}{\overline}
\definecolor{silver}{rgb}{0.808,0.808,0.808}
\definecolor{grey}{rgb}{0.9,0.9,0.9}
\newcolumntype{a}{>{\columncolor{grey}}c}
\newcolumntype{b}{>{\columncolor{silver}}c}
\begin{document}

\title[Applications of the Nash double of a Nash manifold with corners]{Applications of the Nash double\\ 
of a Nash manifold with corners}

\author{Antonio Carbone}
\address{Dipartimento di Scienze dell'Ambiente e della Prevenzione, Palazzo Turchi di Bagno, C.so Ercole I D'Este, 32, Università di Ferrara, 44121 Ferrara (ITALY)}
\email{antonio.carbone@unife.it}
\thanks{The first author is supported by GNSAGA of INDAM}

\author{Jos\'e F. Fernando}
\address{Departamento de \'Algebra, Geometr\'\i a y Topolog\'\i a, Facultad de Ciencias Matem\'aticas, Universidad Complutense de Madrid, Plaza de Ciencias 3, 28040 MADRID (SPAIN)}
\email{josefer@mat.ucm.es}
\thanks{The second author is supported by Spanish STRANO PID2021-122752NB-I00. This article has been developed during several one month research stays of the second author in the Dipartimento di Matematica of the Universit\`a di Trento. The second author would like to thank the department for the invitations and the very pleasant working conditions. This article contains a part of the results of the Ph.D. Thesis of the first author written under the supervision of the second author.
}

\date{15/01/2026}
\subjclass[2010]{Primary: 14P10, 14P20, 58A07; Secondary: 14E20, 57R12.}
\keywords{Nash manifolds with corners, folding Nash manifolds, Nash manifolds with boundary, Nash maps and images, approximation.}

\begin{abstract}
In this work we study some properties and applications of Nash manifolds with corners. Our first main result shows how to `build' a Nash manifold with corners $\Qq\subset\R^n$ from a suitable Nash manifold $M\subset\R^n$ (of its same dimension), that contains $\Qq$ as a closed subset, by folding $M$ along the irreducible components of a normal-crossings divisor of $M$ (the smallest Nash subset of $M$ that contains the boundary $\partial\Qq$ of $\Qq$). Our second main results shows that we can choose as the Nash manifold $M$ the Nash `double' $D(\Qq)$ of $\Qq$, which is the analogous to the Nash double of a Nash manifold with (smooth) boundary, but $D(\Qq)$ takes into account the peculiarities of the boundary of a Nash manifold with corners. 

We propose several applications of the previous results: (1) Nash ramified coverings of closed semialgebraic sets, (2) Weak Nash uniformization of closed semialgebraic sets using Nash manifolds with (smooth) boundary, (3) Representation of compact semialgebraic sets connected by analytic paths as images under Nash maps of closed unit balls, (4) Explicit construction of Nash models for compact orientable smooth surfaces of genus $g\geq0$, and (5) Nash approximation of continuous semialgebraic maps whose target spaces are Nash manifolds with corners. 
\end{abstract}

\maketitle

\section{Introduction}\label{s1}

Nash manifolds with boundary and more generally Nash manifolds with corners are a useful tool in semialgebraic geometry to deal with several problems: (1) Compactifications of Nash manifolds \cite[Thm.VI.2.1]{sh}, (2) To prove that a $d$-dimensional non-compact Nash manifold is Nash diffeomorphic to a $d$-dimensional non-singular algebraic set \cite[Rem.VI.2.11]{sh}, (3) representation of semialgebraic sets connected by analytic paths as images under Nash maps of affine spaces \cite{fe2}, or in the compact case as images under Nash maps of closed balls or spheres \cite{cf1}, (4) Nash uniformization of closed semialgebraic sets by Nash manifold with corners \cite{cf2}, and (5) Approximation of continuous semialgebraic or ${\mathcal S}^\mu$ maps (that is, $\Cont^r$ semialgebraic maps) from a locally compact semialgebraic sets to Nash manifolds with corners by Nash maps \cite{cf3,fgh,fgh2}, among many others. 

In this article we develop new tools to deal with Nash manifolds with corners: (1) the folding of a Nash manifold $M$ along the irreducible components of a normal-crossings divisor of $M$ to construct a Nash manifold with corners (Theorem \ref{fold}), and (2) the construction of the Nash double of a Nash manifold with corners (Theorem \ref{ndnmwc}) as the natural generalization of the construction of the double of a Nash manifold with (smooth) boundary \cite[\S4.B]{fe2} (general case) and \cite[\S VI]{sh} (compact case), and we present further application of such results. Let us introduce the precise terminology to state with precision the main results of this article.

\subsection{Semialgebraic setting}
A set $\Ss\subset\R^n$ is \em semialgebraic \em if it admits a description in terms of a finite boolean combination of polynomial equalities and inequalities, which we will call a \em semialgebraic description\em. The category of semialgebraic sets is closed under basic boolean operations but also under usual topological operations: taking closures (denoted by $\cl(\cdot)$), interiors (denoted by $\Int(\cdot)$), connected components, etc. Every semialgebraic subset of $\R^n$ can be written as a finite union of {\em basic semialgebraic sets}, that is, sets of the type $\{f_1>0,\ldots,f_r>0,g=0\}$ where $f_i,g\in\R[\x]:=\R[\x_1,\ldots,\x_n]$. Closed semialgebraic subsets of $\R^n$ can be represented as finite unions of {\em closed basic semialgebraic sets}, that is, semialgebraic sets of the type $\{f_1\geq0,\ldots,f_r\geq0\}$ where $f_i\in\R[\x]$ (see \cite[Thm.2.7.2]{bcr}). Analogously, open semialgebraic subsets of $\R^n$ can be written as finite unions of {\em open basic semialgebraic sets}, that is, semialgebraic sets of the type $\{f_1>0,\ldots,f_r>0\}$ where $f_i\in\R[\x]$ (see \cite[Thm.2.7.2]{bcr}).

We denote $\ol{\Ss}^{\zar}$ the Zariski closure of a semialgebraic set $\Ss\subset\R^n$. If $\Ss\subset\R^m$ and $\Tt\subset\R^n$ are semialgebraic sets, a map $f:\Ss\to\Tt$ is \em semialgebraic \em if its graph is a semialgebraic set. Two relevant types of semialgebraic maps $f:\Ss\to\Tt$ are restrictions to $\Ss$ of {\em polynomial maps} $f:=(f_1,\ldots,f_n):\R^m\to\R^n$ (where each $f_k\in\R[\x]:=\R[\x_1,\ldots,\x_n]$) whose images are contained in $\Tt$ and restrictions to $\Ss$ of {\em rational maps} $f:=(\frac{g_1}{h_1},\ldots,\frac{g_n}{h_n}):\R^m\dasharrow\R^n$ (where each $g_k,h_k\in\R[\x]:=\R[\x_1,\ldots,\x_n]$ and each $h_k\neq0$) whose images are contained in $\Tt$. In case $\Ss\cap\{h_k=0\}=\varnothing$ for each $k=1,\ldots,n$ we say $f|_\Ss$ is a {\em regular map on $\Ss$}.

A {\em Nash map} on an open semialgebraic set $U\subset\R^n$ is a smooth semialgebraic map $f:U\to\R^m$. Along this article {\em smooth} means $\Cont^\infty$. Given a semialgebraic set $\Ss\subset\R^n$, a \em Nash map on $\Ss$ \em is the restriction to $\Ss$ of a Nash map $F:U\to\R^m$ on an open semialgebraic neighborhood $U\subset\R^n$ of $\Ss$. We denote with ${\mathcal N}(\Ss)$ the ring of Nash functions on $\Ss$ and following \cite{fg1} we say that the semialgebraic set $\Ss$ is {\em irreducible} if ${\mathcal N}(\Ss)$ is an integral domain. In \cite[\S4]{fg1} we prove that each semialgebraic set $\Ss$ can be decomposed uniquely as a finite union of irreducible semialgebraic sets $\Ss_1,\ldots,\Ss_r$ such that each $\Ss_i$ is a maximal irreducible semialgebraic subset of $\Ss$ with respect to the inclusion. The semialgebraic sets $\Ss_1,\ldots,\Ss_r$ are called the {\em irreducible components of $\Ss$}. The irreducible components $\Ss_1,\ldots,\Ss_r$ are closed subsets of $\Ss$. Observe that if $\Ss$ is irreducible (as a semialgebraic set), its Zariski closure $\ol{\Ss}^{\zar}$ is irreducible (as a algebraic set), while the converse does not hold in general.

A semialgebraic set $\Ss\subset\R^n$ is {\em connected by analytic paths} if for each pair of points $x,y\in\Ss$ there exists an analytic path $\alpha:[0,1]\to\Ss$ such that $\alpha(0)=x$ and $\alpha(0)=y$. In \cite[Thm.9.2]{fe2} we prove that each semialgebraic set $\Ss$ can be decomposed uniquely as a finite union of semialgebraic sets $\Tt_1,\ldots,\Tt_\ell$ connected by analytic paths such that each $\Tt_i$ is a maximal semialgebraic subset of $\Ss$ connected by analytic paths with respect to the inclusion. The semialgebraic sets $\Tt_1,\ldots,\Tt_\ell$ are called the {\em components of $\Ss$ connected by analytic paths}. The components $\Tt_1,\ldots,\Tt_\ell$ of $\Ss$ connected by analytic paths are closed subsets of $\Ss$. A component $\Tt_j$ connected by analytic paths of $\Ss$ is irreducible (as a semialgebraic set), so it is contained in at least one of the irreducible component $\Ss_i$ of $\Ss$ (maybe of larger dimension). Each irreducible component $\Ss_i$ of $\Ss$ is a (finite) union of components connected by analytic paths of $\Ss$, that is, the components connected by analytic paths of $\Ss_i$ are also components connected by analytic paths of $\Ss$.

A {\em Nash subset} $X\subset M$ of a {\em Nash manifold} $M\subset\R^n$ (that is, a semialgebraic set that is a smooth submanifold of $\R^n$) is the zero set of a Nash function $f:M\to\R$, whereas the {\em Nash closure in $M$} of a semialgebraic set $\Ss\subset M$ is the smallest Nash subset $X$ of $M$ that contains $\Ss$. The Nash subset $X$ of $M$ is irreducible if it cannot be written as the union of two Nash subsets of $M$ strictly contained in $X$. By \cite[Cor.8.6.8]{bcr} each Nash set admits a unique decomposition as the union of its Nash irreducible components. The Nash irreducible components of $X$ coincide with its irreducible components as a semialgebraic set \cite[3.1(v)]{fg1}. In this setting, a {\em Nash normal-crossings divisor} of a Nash manifold $M$ is a Nash set $X\subset M$ whose Nash irreducible components are Nash submanifolds of codimension $1$ of $M$ and constitute a transversal family. 

\subsubsection{Whitney's semialgebraic topology}
The {\em(Whitney's) ${\mathcal S}^0$ topology} of the space ${\mathcal S}^0(\Ss,\Tt)$ of continuous semialgebraic maps between the semialgebraic sets $\Ss\subset\R^m$ and $\Tt\subset\R^n$ is determined by the open neighborhoods: ${\mathcal U}_f:=\{g\in{\mathcal S}^0(\Ss,\Tt):\ \|f-g\|<\veps\}$ where $f\in{\mathcal S}^0(\Ss,\Tt)$ and $\veps:\Ss\to\R$ is a strictly positive continuous semialgebraic function on $\Ss$. If $\mu\geq1$ the {\em (Whitney's) ${\mathcal S}^\mu$ topology} \cite[\S II.1, p.79--80]{sh} of the space ${\mathcal S}^\mu(M,N)$ of ${\mathcal S}^\mu$ maps between two Nash manifolds $M\subset\R^m$ and $N\subset\R^n$ is defined as follows. Let $\xi_1,\dots,\xi_s$ be semialgebraic ${\mathcal S}^{\mu-1}$ tangent fields on $M$ that span the tangent bundle of $M$. For every strictly positive continuous semialgebraic function $\veps:M\to\R$ and each $f\in{\mathcal S}^\mu(M,N)$ we denote the set of all $g\in{\mathcal S}^\mu(M,N)$ such that $\|f-g\|<\veps$ and $\|\xi_{i_1}\cdots\xi_{i_\ell}(f-g)\|<\veps$ for $1\leq i_1,\dots,i_\ell\leq s$ and $1\leq\ell\leq r$ with ${\mathcal U}_\epsilon$. These sets ${\mathcal U}_\epsilon$ form a basis of neighborhoods of $f$ that does not depend on the choice of the tangent fields. Let $\Ss\subset M$ be a closed semialgebraic subset of $M$ and $\Tt\subset N$ a semialgebraic subset of $N$. The ${\mathcal S}^\mu$ topology of ${\mathcal S}^\mu(M,N)$ induces naturally a \textit{(Whitney's) ${\mathcal S}^\mu$ topology} on the subspace ${\mathcal S}^\mu(\Ss,\Tt)$ of ${\mathcal S}^\mu$ maps between $\Ss$ and $\Tt$. We refer the reader to \cite[\S2.D]{bfr} and \cite[\S2.2]{cf3} for further details.

\subsection{Folding Nash manifolds to build Nash manifolds with corners}
A {\em Nash manifold with corners} is a semialgebraic set that is a smooth submanifold with corners of $\R^n$. A Nash manifold with corners $\Qq\subset\R^n$ is contained, as a closed (semialgebraic) subset, in a Nash manifold $M\subset\R^n$ of its same dimension \cite[Prop.1.2]{fgr}. In fact, we restrict our scope to Nash manifolds $\Qq\subset\R^n$ with corners such that the Nash closure in $M$ of the boundary $\partial\Qq$ is a (Nash) normal-crossings divisor of $M$ (maybe after shrinking $M$). Our main result shows that the previous Nash manifold $M$ can be `folded' to reconstruct the Nash manifold with corners $\Qq$. Namely,

\begin{thm}[Folding Nash manifolds]\label{fold}
Let $\Qq\subset\R^n$ be a $d$-dimensional Nash manifold with corners that is a closed (semialgebraic) subset of $\R^n$. Then there exist:
\begin{itemize}
\item[(i)] A $d$-dimensional Nash manifold $M\subset\R^n$ that contains $\Qq$ as a closed subset.
\item[(ii)] A Nash normal-crossings divisor $Y\subset M$ that is the Nash closure of $\partial\Qq$ in $M$ and satisfies $\Qq\cap Y=\partial\Qq$.
\item[(iii)] An open semialgebraic neighborhood $W\subset M$ of $\Qq$ such that $\cl(W)\subset M$ and a proper Nash map $f:\cl(W)\to\Qq$ such that $f(\Qq)=\Qq$, $f(\Int(\Qq))=\Int(\Qq)$, $f|_\Qq:\Qq\to\Qq$ is a semialgebraic homeomorphism close to the identity map (with respect to the ${\mathcal S}^0$ topology) and $f|_{\Int(\Qq)}:\Int(\Qq)\to\Int(\Qq)$ is a Nash diffeomorphism.
\end{itemize}
In addition, for each $x\in\partial\Qq$ there exist open semialgebraic neighborhoods $U,V\subset M$ of $x$ equipped with Nash diffeomorphisms $\varphi:U\to\R^d$ and $\psi:V\to\R^d$ and $1\leq s\leq d$ such that 
$$
\psi\circ f\circ\varphi^{-1}:\R^d\to\R^d,\ (x_1,\ldots,x_d)\mapsto(x_1^2,\ldots,x_s^2,x_{s+1},\ldots,x_d).
$$
\end{thm}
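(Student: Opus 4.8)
The plan is to obtain (i)--(ii) from the embedding theorem \cite[Prop.1.2]{fgr} together with a shrinking argument, and then to establish (iii) and the local normal form by induction on the number $r$ of irreducible components of the Nash normal-crossings divisor $Y$, ``unfolding'' $\Qq$ into $M$ one wall at a time, so that the sought map $f$ arises as the composition of the resulting partial folds. By \cite[Prop.1.2]{fgr} the manifold $\Qq$ sits as a closed subset of a $d$-dimensional Nash manifold, and after shrinking the latter to a suitable open semialgebraic neighborhood $M$ of $\Qq$ I may assume that the Nash closure $Y$ of $\partial\Qq$ in $M$ is a Nash normal-crossings divisor with $\Qq\cap Y=\partial\Qq$, that each irreducible component $Y_i$ equals $\{h_i=0\}$ for a Nash function $h_i\colon M\to\R$ with $dh_i\neq0$ along $Y_i$, and that near $\partial\Qq$ the set $\Qq$ lies on the side $h_i\geq0$ of each wall $Y_i$ it meets. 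This settles (i) and (ii).

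For (iii) I argue by induction on $r$. If $r=0$ then $\partial\Qq=\varnothing$, $\Qq$ is already a Nash manifold and $f=\mathrm{id}_\Qq$ works. For the inductive step I fix the component $Y_1$ and, using a Nash tubular neighborhood of $Y_1$ adapted to $Y_2,\dots,Y_r$ (transversality of $Y$ lets me choose Fermi-type collar coordinates in which $Y_1=\{x_1=0\}$ and each $Y_j$, $j\geq2$, is a union of ``vertical'' coordinate sheets), I produce an open semialgebraic neighborhood $N_1$ of $Y_1$ and a Nash involution $\sigma_1\colon N_1\to N_1$ with $\mathrm{Fix}(\sigma_1)=Y_1$ acting as $x_1\mapsto-x_1$ and fixing $h_2,\dots,h_r$ near $Y_1$. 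I then \emph{double $\Qq$ along $Y_1$}, setting $\Qq':=\Qq\cup\sigma_1(\Qq\cap N_1)$, and check that $\Qq'$ is again a Nash manifold with corners, closed in a further shrunk Nash manifold, whose boundary has Nash closure the normal-crossings divisor $Y_2\cup\dots\cup Y_r$ with only $r-1$ components and $\Qq'\cap(Y_2\cup\dots\cup Y_r)=\partial\Qq'$; the crucial point is the classical Nash-double lemma that a $\sigma_1$-invariant Nash function defined near $Y_1$ is a Nash function of $h_1^2$, which is precisely what makes $\Qq'$ a Nash object. Simultaneously I construct a proper Nash ``squaring fold'' $g_1\colon\Qq'\to\Qq$ that is $\sigma_1$-invariant, equals $(x_1,x')\mapsto(x_1^2,x')$ in the chosen collar coordinates near the faces of $\Qq$ contained in $Y_1$, and restricts to a Nash diffeomorphism $\Int(\Qq)\to\Int(\Qq)$ that is $\veps$-close to the identity; here one cannot ``cut off'' in the Nash category (a Nash map agreeing with the identity on an open set is the identity), so $g_1|_\Qq$ is built as a genuine Nash self-map of $\Qq$ that is close to but equal nowhere to $\mathrm{id}_\Qq$, via explicit even Nash functions in the collar direction (of the type $t\mapsto t^2/(\sqrt{t^2+\eta}+c)$), whose ${\mathcal S}^0$-distance to $t\mapsto|t|$ is controlled by $\eta,c$.

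Applying the induction hypothesis to $\Qq'$ (re-embedded closed in an affine space) yields the required $d$-dimensional Nash manifold $M$ containing $\Qq'$, hence $\Qq$, as a closed subset, a Nash normal-crossings divisor which is the Nash closure of $\partial\Qq'$, an open neighborhood $W'\supset\Qq'$ with $\cl(W')\subset M$, and a proper Nash map $f'\colon\cl(W')\to\Qq'$ with all the stated properties and local normal form. Setting $W:=(f')^{-1}(g_1^{-1}(\Qq))\cap W'$ and $f:=g_1\circ f'|_{\cl(W)}$, I would verify: $\cl(W)\subset\cl(W')\subset M$; $f$ is proper as a composition of proper maps; $f(\Qq)=\Qq$ and $f(\Int(\Qq))=\Int(\Qq)$ with $f|_{\Int(\Qq)}$ an open proper Nash embedding, hence a Nash diffeomorphism onto $\Int(\Qq)$; that $Y:=\,$(Nash closure of $\partial\Qq$ in $M$) is the union of the induction-hypothesis divisor with the Nash closure of the $Y_1$-faces, again normal-crossings with $\Qq\cap Y=\partial\Qq$; and that $f|_\Qq$ is a semialgebraic self-homeomorphism of $\Qq$ that is ${\mathcal S}^0$-close to the identity, its deviation from $\mathrm{id}_\Qq$ being concentrated in the thin collars around $\partial\Qq$. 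For the normal form at $x\in\partial\Qq$: if $x\notin Y_1$, then $g_1$ is a local Nash diffeomorphism near the relevant point and the form for $f'$ transports directly; if $x\in Y_1$ lies in exactly $s$ walls, then $x$ corresponds to a depth-$(s-1)$ corner of $\Qq'$, the induction hypothesis supplies $s-1$ quadratic factors for $f'$, $g_1$ contributes the remaining factor $x_1\mapsto x_1^2$, and after matching the charts $\varphi,\psi$ one gets exactly $(x_1,\dots,x_d)\mapsto(x_1^2,\dots,x_s^2,x_{s+1},\dots,x_d)$.

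I expect the main obstacle to be carrying the inductive step out honestly in the Nash category: showing that doubling $\Qq$ along the single wall $Y_1$ produces a genuine Nash manifold with corners together with a genuine Nash squaring fold --- which rests on the invariant-function lemma and parallels the construction of the classical Nash double --- and doing this compatibly at all the corners of $\Qq$ simultaneously, so that the reflections across the various walls commute near each corner and the local charts can be matched to yield the \emph{exact} normal form, while keeping $\Qq'$ and the eventual $M$ closed and the successive divisors normal-crossings, and globally patching the fold (which, lacking Nash cutoffs, must be assembled from the collar models into a single close-to-the-identity Nash map). By comparison, the quantitative estimate making $f|_\Qq$ close to $\mathrm{id}_\Qq$ in the ${\mathcal S}^0$ topology is routine once the collar structure is in place.
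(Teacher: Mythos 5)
Your outline agrees with the paper's strategy in spirit (the fold is a composition of single-wall folds, one per irreducible component of $Y$, followed by a confrontation of local models), but the two points you defer to your ``main obstacle'' paragraph are precisely the content of the proof, and your proposed substitutes do not work as stated. The first gap is the \emph{global} construction of a single Nash fold. Your even functions $t\mapsto t^2/(\sqrt{t^2+\eta}+c)$ live only in a collar coordinate of $Y_1$, and, as you yourself note, there is no Nash cutoff to splice them into a map defined on all of a neighborhood of $\Qq$; acknowledging this does not resolve it. The mechanism the paper uses is to make the fold factor through the Nash double: one builds an ${\mathcal S}^{2k-1}$ embedding $\phi_{a,k}:M\to D(\Hh_1)$ which equals $(\pi|_{\Hh_{1+}})^{-1}$ outside the collar and interpolates, inside the collar, between the identity and $\sqrt{\cdot}$ by a function $f_{a,k}$ whose Taylor expansions match $\s$ at $0$ and $\sqrt{\s}$ at $a$ to order $2k-1$ (Lemma \ref{fold1}); one then approximates $\phi_{a,k}$ by a genuine Nash embedding \emph{relative to the normal-crossings divisor} using \cite[Thm.1.6, Thm.1.7, Prop.8.2]{bfr}, and composes with the globally Nash projection $\pi:D(\Hh_1)\to\Hh_1$. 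This relative-approximation step is the idea missing from your proposal, and without it the single-wall fold $g_1$ does not exist in the Nash category.

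The second gap is the object you induct on. You set $\Qq':=\Qq\cup\sigma_1(\Qq\cap N_1)$ with $N_1$ a collar neighborhood of $Y_1$; but the reflected piece $\sigma_1(\Qq\cap N_1)$ acquires an artificial boundary at the far end of the collar (or fails to be closed if $N_1$ is open), so $\Qq'$ is not a Nash manifold with corners, closed in an ambient Nash manifold, whose divisor has $r-1$ components -- and where $Y_1$ meets the other walls this spurious face even interferes with them. Hence the induction hypothesis cannot be applied to $\Qq'$ as defined. The paper avoids this by never forming such a partial double of $\Qq$: each $f_i$ folds a closed neighborhood of $\Qq$ in $M$ onto the full Nash manifold with boundary $\Hh_i=\{h_i\geq0\}\supset\Qq$, and one then verifies the ``crossed'' inclusions $f_i(\Hh_j\cap W_k)\subset\Hh_i\cap\Hh_j\cap W_{k-1}$ on a nested chain of neighborhoods $W_\ell\subset\cdots\subset W_0$ so that the composition lands in $\bigcap_i\Hh_i=\Qq$ and remains proper. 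You would need either to repair $\Qq'$ (which amounts to replacing it by $\Hh_2\cap\cdots\cap\Hh_\ell$ near $\Qq$, i.e.\ the paper's setup) or to abandon the induction on walls in favor of the direct composition. The remaining items -- the confrontation of local models via units (Lemma \ref{change}) and the ${\mathcal S}^0$ estimate -- are indeed routine once the above is in place, as you say.
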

\begin{remark}
By Mostowski's trick (Proposition \ref{Mos}) we can always embedded a Nash manifold with corners as a closed (semialgebraic) subset of an affine space.
\end{remark}

In addition, we construct for Nash manifolds with corners $\Qq\subset\R^n$ the counterpart of the Nash double of a Nash manifold with (smooth) boundary and such Nash double $D(\Qq)$ is Nash diffeomorphic around $\Qq_+$ (which is a suitable Nash manifold with corners contained in $D(\Qq)$ and Nash diffeomorphic to $\Qq$) to the Nash manifold $M$ provided by Theorem \ref{fold} (Theorem \ref{ndnmwc}). As the presentation is rather technical and involves quite notations, we pospone the precise statement until Section \ref{s4}.

\subsection{Applications}
We next propose some further applications of folding of Nash manifolds to construct Nash manifolds with corners.

\subsubsection{Nash ramified covering of closed semialgebraic sets}
As a straightforward consequence of \cite[Cor.1.10]{cf2}, Lemma \ref{cnma} and Theorem \ref{ndnmwc} the reader can prove the following:

\begin{cor}[Nash ramified covering map]\label{red5}
Let $\Ss\subset\R^m$ be a closed semialgebraic set and let $\Ss_1,\ldots,\Ss_\ell$ be the components of $\Ss$ connected by analytic paths. Then there exist: 
\begin{itemize}
\item[{\rm(i)}] A finite union $X\subset\R^n$ of pairwise disjoint irreducible non-singular real algebraic sets $X_i\subset\R^n$ (maybe of different dimensions) for $i=1,\ldots,\ell$. 
\item[\rm{(ii)}] A polynomial map $f:\R^n\to\R^m$ such that $f(X_i)=\Ss_i$ and the restriction $f|_{X_i}:X_i\to\Ss_i$ is proper for each $i=1,\ldots,\ell$. 
\item[\rm{(iii)}] Closed semialgebraic sets $\Rr_i\subset\Ss_i$ of dimension strictly smaller than the dimension of $\Ss_i$ for $i=1,\ldots,\ell$ such that $\Ss_i\setminus\Rr_i$ and $X\setminus f^{-1}(\Rr_i)$ are Nash manifolds of the same dimension as $\Ss_i$ and the restriction $f|_{X_i\setminus f^{-1}(\Rr_i)}:X_i\setminus f^{-1}(\Rr_i)\to\Ss_i\setminus\Rr_i$ is a Nash covering map whose fibers are finite and have constant cardinality for $i=1,\ldots,\ell$. 
\end{itemize}
\end{cor}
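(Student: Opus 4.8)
The plan is to reduce everything to the cited black boxes and assemble the covering structure component by component. First I would invoke \cite[Cor.1.10]{cf2}: applied to the closed semialgebraic set $\Ss$ and its decomposition into analytic-path components $\Ss_1,\dots,\Ss_\ell$, this should produce a Nash manifold with corners $\Qq\subset\R^N$ (or a disjoint union of such, one for each $\Ss_i$) together with a proper surjective Nash map $g\colon\Qq\to\Ss$ whose restriction to each piece $\Qq_i\to\Ss_i$ is proper and is a Nash diffeomorphism over the complement of a lower-dimensional semialgebraic set — this is exactly the ``Nash uniformization by Nash manifolds with corners'' statement. Here I expect to have to massage the statement so that the branching set downstairs is a closed semialgebraic subset $\Rr_i\subsetneq\Ss_i$ of strictly smaller dimension; this is routine since the non-regular locus of a proper semialgebraic map together with the images of the corner/boundary strata is semialgebraic and nowhere dense in $\Ss_i$ (each $\Ss_i$ being pure-dimensional on a dense open set, after possibly enlarging $\Rr_i$ to include the lower-dimensional local strata).

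Next I would apply Theorem \ref{ndnmwc} to each Nash manifold with corners $\Qq_i$: this gives the Nash double $D(\Qq_i)$, a Nash manifold without boundary of the same dimension, containing a copy $\Qq_{i,+}$ of $\Qq_i$ as a closed subset, together with (via its identification with the manifold $M$ of Theorem \ref{fold}) a proper Nash map $f_i\colon\cl(W_i)\to\Qq_i$ which is a folding: a Nash diffeomorphism over $\Int(\Qq_i)$ and locally of the form $(x_1,\dots,x_d)\mapsto(x_1^2,\dots,x_s^2,x_{s+1},\dots,x_d)$ near boundary points. Composing, $g_i\circ f_i\colon\cl(W_i)\to\Ss_i$ is a proper Nash map which, over $\Ss_i\setminus\Rr_i'$ for a suitably enlarged closed lower-dimensional $\Rr_i'$ (add the image under $g_i$ of the branch locus of the folding, which lies in $\partial\Qq_i$ and hence maps into a lower-dimensional set), restricts to a Nash covering map with finite fibers. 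To see the fibers have constant cardinality on $\Ss_i\setminus\Rr_i'$ one uses that a proper Nash local diffeomorphism onto a connected (even: analytic-path-connected, hence connected) base is a covering map with constant finite fiber cardinality — this is where connectedness of $\Ss_i$ is used, after checking $\Ss_i\setminus\Rr_i'$ is still connected, which holds because removing a closed semialgebraic set of strictly smaller dimension from a pure-dimensional analytic-path-connected set leaves it connected (indeed analytic-path-connected, by a general-position argument for analytic arcs, cf.\ the discussion of analytic-path components in the excerpt).

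Finally I would apply Lemma \ref{cnma} (the ``compact/closed Nash manifold to non-singular algebraic set'' lemma referenced in the statement) to realize each $\cl(W_i)$ — or rather the ambient Nash manifold $D(\Qq_i)$, or a closed Nash submanifold thereof carrying $\cl(W_i)$ — as a non-singular real algebraic set $X_i\subset\R^{n}$ with the composite map becoming the restriction of a polynomial map $\R^n\to\R^m$; arranging the $X_i$ to sit in a common $\R^n$ and be pairwise disjoint is achieved by taking a disjoint union (translating into disjoint parallel slices, or using the standard trick of embedding $\R^{n_i}\times\{i\}$) and noting a finite disjoint union of non-singular algebraic sets is again non-singular algebraic. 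The main obstacle is the bookkeeping in (iii): one must enlarge $\Rr_i$ consistently so that simultaneously (a) $\Ss_i\setminus\Rr_i$ and its preimage are Nash manifolds of dimension $\dim\Ss_i$, (b) $f|_{X_i\setminus f^{-1}(\Rr_i)}$ is a local Nash diffeomorphism (so $\Rr_i$ must absorb both the singular locus of $\Ss_i$, the image of $\partial\Qq_i$ under $g_i$, and the critical values of $g_i\circ f_i$), and (c) the fibers stay finite of constant cardinality; each of these is a closed semialgebraic set of dimension $<\dim\Ss_i$, so their union $\Rr_i$ works, but verifying properness of the restriction and constancy of the fiber cardinality over the connected base $\Ss_i\setminus\Rr_i$ is the step requiring genuine care rather than citation.
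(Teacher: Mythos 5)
Your overall recipe (uniformize by \cite[Cor.1.10]{cf2}, pass to the Nash double, algebraize via Artin--Mazur) is the intended one, but there is a concrete misstep in the middle: you build the ramified covering out of the \emph{folding} map $f_i\colon\cl(W_i)\to\Qq_i$ of Theorem \ref{fold} rather than out of the projection $\pi\colon D(\Qq_i)\to\Qq_i$ of Theorem \ref{ndnmwc}(i), and the folding map cannot do the job. First, its fibers over $\Int(\Qq_i)$ are \emph{not} of constant cardinality: deep in the interior of $\Qq_i$, away from the folded collar, $f_i$ is injective (one preimage), while near a corner of codimension $s$ the local model $(x_1,\dots,x_d)\mapsto(x_1^2,\dots,x_s^2,x_{s+1},\dots,x_d)$ produces $2^s$ preimages; moreover the domain $\cl(W_i)$ has a topological boundary $\cl(W_i)\setminus W_i$ on which $f_i$ is proper but not a local diffeomorphism in the open sense, so the ``proper local diffeomorphism onto a connected base is a covering'' argument you invoke does not apply (already for $\Qq=[0,\infty)\subset\R$ the fold $[-\delta,\infty)\to[0,\infty)$ has fibers of cardinality $2$ near $0$ and $1$ far from $0$). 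Second, $\cl(W_i)$ is not a boundaryless Nash manifold, so you cannot feed the composite $g_i\circ f_i$ into the Artin--Mazur construction of Lemma \ref{cnma} to obtain an irreducible non-singular algebraic set $X_i$ and a \emph{polynomial} $f$ with $f(X_i)=\Ss_i$ and $f|_{X_i}$ proper; and if you instead algebraize all of $D(\Qq_i)$, the folding map is undefined (and uncontrolled) outside $\cl(W_i)$.

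The fix is exactly why the paper cites Theorem \ref{ndnmwc} and not Theorem \ref{fold} here: use $\pi\colon D(\Qq_i)\to\Qq_i$, $(x,t)\mapsto x$, which is defined on the whole boundaryless Nash manifold $D(\Qq_i)$, is proper and surjective, and over $\Int(\Qq_i)=\{h_1>0,\dots,h_{\ell_i}>0\}$ has the explicit fiber $\{(x,\pm\sqrt{h_1(x)},\dots,\pm\sqrt{h_{\ell_i}(x)})\}$ of constant cardinality $2^{\ell_i}$, i.e.\ it is an honest $2^{\ell_i}$-sheeted Nash covering there (no connectedness argument for $\Ss_i\setminus\Rr_i$ is needed). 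Then $\Rr_i$ absorbs the non-diffeomorphism locus of $g_i$ together with $g_i(\partial\Qq_i)$, the composite $G_i:=g_i\circ\pi\colon D(\Qq_i)\to\Ss_i\subset\R^m$ is a proper surjective Nash map on a Nash manifold, and applying the Artin--Mazur argument of Lemma \ref{cnma} to $G_i$ (its proof works verbatim for any Nash map, not just the inclusion) yields the irreducible non-singular $X_i$ on which $G_i$ becomes the restriction of a linear projection; each connected component of $X_i$ contributes a full copy of the $2^{\ell_i}$-fold cover, so the generic fiber cardinality is $2^{\ell_i}$ times the number of components, still constant. The remaining points of your write-up (enlarging $\Rr_i$, disjoint placement of the $X_i$, properness of finite unions of proper restrictions) are fine.
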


\subsubsection{Weak Nash uniformization of closed semialgebraic sets by Nash manifolds with boundary.}\label{resboundary}

A Nash uniformization result for semialgebraic sets similar to \cite[Cor.1.10]{cf2} changing the Nash manifold with corners $\Qq$ by a Nash manifold with boundary $\Hh$ (that is, a semialgebraic set that is a smooth submanifold with (smooth) boundary of $\R^n$) seems difficult to be obtained. We propose the following statement, whose proof is based on the folding of Nash manifolds (Theorem \ref{fold}):

\begin{thm}[Weak Nash uniformization of closed semialgebraic sets]\label{red3}
Let $\Ss\subset\R^m$ be a $d$-dimensional closed semialgebraic set, let $\Ss_1,\ldots,\Ss_\ell$ be the components of $\Ss$ connected by analytic paths and let $\veps_i:\Ss_i\to\R$ be a strictly positive continuous semialgebraic function on $\Ss_i$ for $i=1,\ldots,\ell$. Then there exist:
\begin{itemize}
\item[{\rm(i)}] A finite union $X\subset\R^n$ of pairwise disjoint irreducible non-singular real algebraic sets $X_i\subset\R^n$ (maybe of different dimensions) for $i=1,\ldots,\ell$. 
\item[{\rm(ii)}] Nash manifolds with boundary $\Hh_{i,\veps}\subset\R^m$ such that the Zariski closure $Z_{i,\veps}$ of $\partial\Hh_{i,\veps}$ is a non-singular real algebraic set $Z_{i,\veps}\subset X_i$ of dimension $\dim(\Ss_i)-1$ and the interior $\Int(\Hh_{i,\veps}):=\Hh_{i,\veps}\setminus\partial\Hh_{i,\veps}$ is a connected component of $X_i\setminus Z_{i,\veps}$ for $i=1,\ldots,\ell$.
\item[{\rm(iii)}] A proper Nash map $f:\bigsqcup_{i=1}^\ell\Hh_{i,\veps}\to\Ss$ such that $f(\Hh_{i,\veps})=\Ss_i$ for $i=1,\ldots,\ell$.
\item[{\rm(iv)}] A closed semialgebraic subset $\Rr_i\subset\Ss_i$ of dimension strictly smaller than $\dim(\Ss_i)$ such that if $\Tt_{i,\veps}:=\{x\in\Ss_i:\ {\rm dist}(x,\Rr_i)\leq\veps_i(x)\}$, the restriction map $f|_{\Hh_{i,\veps}\setminus f^{-1}(\Tt_{i,\veps})}:\Hh_{i,\veps}\setminus f^{-1}(\Tt_{i,\veps})\to\Ss_i\setminus\Tt_{i,\veps}$ is a Nash diffeomorphism.
\end{itemize}
If $\Ss$ is compact, we may assume in addition: $\veps_i$ is constant, $X_i$ and $\Hh_{i,\veps}$ are also compact and $X_i$ is connected for $i=1,\ldots,\ell$.
\end{thm}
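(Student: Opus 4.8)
The plan is to reduce Theorem~\ref{red3} to \cite[Cor.1.10]{cf2} combined with the folding construction of Theorem~\ref{fold}, extracting a Nash manifold with (smooth) boundary from the Nash manifold with corners that the uniformization result produces. First I would apply \cite[Cor.1.10]{cf2} to the closed semialgebraic set $\Ss$ and its components $\Ss_1,\dots,\Ss_\ell$ connected by analytic paths: this yields, for each $i$, a Nash manifold with corners $\Qq_i$ (of dimension $\dim(\Ss_i)$), a proper Nash map $g_i\colon\Qq_i\to\Ss_i$ onto $\Ss_i$, and a closed semialgebraic subset $\Rr_i\subset\Ss_i$ of dimension $<\dim(\Ss_i)$ outside of whose $\veps_i$-neighbourhood $g_i$ restricts to a Nash diffeomorphism. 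Here I would need to record that the source can be taken to be a closed subset of an affine space (Mostowski's trick, Proposition~\ref{Mos}) so that Theorem~\ref{fold} applies, and that the normal-crossings hypothesis on the Nash closure of $\partial\Qq_i$ in an ambient Nash manifold $M_i$ of the same dimension holds after shrinking $M_i$; both are guaranteed by \cite[Prop.1.2]{fgr} and the discussion preceding Theorem~\ref{fold}.

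The key new step is to replace the Nash manifold with corners $\Qq_i$ by a Nash manifold with boundary $\Hh_{i,\veps}$. For this I would apply Theorem~\ref{fold} to each $\Qq_i$, obtaining the ambient Nash manifold $M_i\supset\Qq_i$, the Nash normal-crossings divisor $Y_i\subset M_i$ with $\Qq_i\cap Y_i=\partial\Qq_i$, the neighbourhood $W_i$ with $\cl(W_i)\subset M_i$, and the proper folding map $f_i\colon\cl(W_i)\to\Qq_i$ which is a Nash diffeomorphism over $\Int(\Qq_i)$ and, near each corner point, has the local normal form $(x_1,\dots,x_d)\mapsto(x_1^2,\dots,x_s^2,x_{s+1},\dots,x_d)$. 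The point is that $\cl(W_i)$ is a Nash manifold \emph{without} corners (only a piece of $M_i$), but it is not a manifold with boundary either; so instead I would look at the closure inside $M_i$ of a single connected component of $M_i\setminus Y_i$ containing $\Int(\Qq_i)$. Because $Y_i$ is a \emph{transversal} family of non-singular codimension-one Nash hypersurfaces, one can perturb/choose the defining equations so that a well-chosen irreducible component $Z_{i,\veps}$ of $Y_i$ (or a suitable non-singular real algebraic hypersurface approximating it) cuts $M_i$ transversally, and the closure of one side is a Nash manifold with (smooth) boundary whose boundary is $Z_{i,\veps}\cap(\text{that side})$; composing the inclusion of this manifold-with-boundary into $\cl(W_i)$ with $f_i$ and then with $g_i$ gives the desired proper Nash map onto $\Ss_i$. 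The algebraic sets $X_i$ of item~(i) and the non-singular algebraic $Z_{i,\veps}\subset X_i$ of item~(ii) are produced by approximating the Nash data by algebraic data: this is where one invokes the standard algebraization results (e.g. \cite[Rem.VI.2.11]{sh} and Nash approximation in the ${\mathcal S}^\mu$ topology) to realise $M_i$ as a non-singular algebraic set $X_i$ and $Y_i$ (or the relevant component of it) as a non-singular algebraic hypersurface, after which $\Int(\Hh_{i,\veps})$ becomes a connected component of $X_i\setminus Z_{i,\veps}$ as required.

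With $\Hh_{i,\veps}$ and the composite map $f|_{\Hh_{i,\veps}}=g_i\circ f_i\circ(\text{inclusion})$ in hand, items~(iii) and~(iv) follow by bookkeeping: the map is proper as a composite of proper maps restricted to a closed subset; the image is $\Ss_i$ because $f_i(\Qq_i)=\Qq_i$, $f_i(\Int(\Qq_i))=\Int(\Qq_i)$ and $g_i(\Qq_i)=\Ss_i$, and one checks that the manifold-with-boundary side surjects onto all of $\Qq_i$ under $f_i$; and on the complement of $f^{-1}(\Tt_{i,\veps})$, where $\Tt_{i,\veps}$ is the $\veps_i$-neighbourhood of $\Rr_i$, the map $g_i$ is a Nash diffeomorphism (by \cite[Cor.1.10]{cf2}) onto $\Ss_i\setminus\Tt_{i,\veps}$ and $f_i$ is a Nash diffeomorphism over $\Int(\Qq_i)$, so the composite is a Nash diffeomorphism — one must only verify that the preimage of $\Ss_i\setminus\Tt_{i,\veps}$ lies in the interior part where everything is a diffeomorphism, which is arranged by shrinking $\veps_i$ so that $\Tt_{i,\veps}$ swallows a neighbourhood of $g_i(\partial\Qq_i)$. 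The compact case is obtained by taking $\veps_i$ constant and using that in \cite[Cor.1.10]{cf2} and in the algebraization step one may keep everything compact and $X_i$ connected.

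The main obstacle I expect is item~(ii): producing a genuine Nash (and then non-singular algebraic) manifold with \emph{smooth} boundary out of the folded manifold $M_i$, whose divisor $Y_i$ is in general a union of \emph{several} transversally intersecting hypersurfaces rather than a single smooth one. One has to argue that among the connected components of $M_i\setminus Y_i$ there is one whose closure is bounded by only one smooth hypersurface — equivalently, that $\Int(\Qq_i)$ can be ``engulfed'' in a half-space-like region of $M_i$ — or else enlarge $M_i$ (shrinking $W_i$ correspondingly) so that this holds; controlling this globally while keeping the map onto $\Ss_i$ surjective and keeping the diffeomorphism property over $\Ss_i\setminus\Tt_{i,\veps}$ is the delicate point, and is presumably why the result is stated as a \emph{weak} uniformization.
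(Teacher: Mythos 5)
Your first step (reducing via \cite[Cor.1.10]{cf2} to a single connected Nash manifold with corners $\Qq$ sitting inside a Nash envelope $M$ with normal-crossings divisor $Y$, and then invoking Theorem \ref{fold}) matches the paper. But the step you yourself flag as the main obstacle — producing the Nash manifold with \emph{smooth} boundary $\Hh_{i,\veps}$ — is exactly where your proposal has a genuine gap, and the mechanism you suggest for it would not work. You try to extract $\Hh_{i,\veps}$ from the corner structure itself: take the closure of a connected component of $M_i\setminus Y_i$ containing $\Int(\Qq_i)$ and hope that it, or a side of a single well-chosen irreducible component of $Y_i$, is bounded by one smooth hypersurface. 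The closure of the component containing $\Int(\Qq_i)$ is $\Qq_i$ itself, which genuinely has corners, and there is no reason any component of $M_i\setminus Y_i$ should have smooth-boundary closure; taking one half-space $\{h_1\geq 0\}$ instead contains $\Qq_i$ but its interior is only a connected component of $M_i\setminus Y_{i,1}$, not of the global algebraic set $X_i\setminus Z_{i,\veps}$ as item (ii) demands, and its boundary passes through $\partial\Qq_i$, which interferes with the diffeomorphism requirement in (iv). You leave this unresolved ("controlling this globally \dots is the delicate point").

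The paper's resolution is different in kind: the smooth boundary does \emph{not} come from $Y$ at all. One first shrinks to a small semialgebraic neighborhood $M=\{\dist(\cdot,\Qq)<\veps_0\}$ of $\Qq$ in the non-singular Zariski closure $X$, then uses Shiota's theorem \cite[Thm.VI.2.1]{sh} to produce an ${\mathcal S}^1$ function $g$ with $\Hh:=g^{-1}([0,+\infty))$ an ${\mathcal S}^1$ manifold with boundary containing $\Qq$ \emph{in its interior}, extends and Nash-approximates it in the ${\mathcal S}^1$ topology by a global $H$ on $X$ (a polynomial in the compact case), so that $Z_\veps:=H^{-1}(0)$ is a non-singular hypersurface of $X$ disjoint from $\Qq$ and $\Int(\Hh_\veps)$ is a genuine connected component of $X\setminus Z_\veps$; a further resolution argument (\cite[Lem.C.1]{fe3}, \cite{bm}) makes $Z_\veps$ algebraic in the non-compact case. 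Since $\Qq\subset\Hh_\veps\subset\cl(M)$ and the folding map $f_0:\cl(M)\to\Qq$ of Theorem \ref{fold} is proper with $f_0(\Qq)=\Qq$, the restriction $f_0|_{\Hh_\veps}$ is proper and surjective, and $\Hh_\veps\setminus f_0^{-1}(\Tt_\veps)\subset\Int(\Qq)$ guarantees (iv). So the missing idea is to place the smooth boundary \emph{outside} $\Qq$, engulfing it, and let the folding map collapse the excess, rather than to carve a smooth boundary out of the corner divisor.
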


Related to the previous result in \cite[Proof of Thm.1.4, \S8.C.12]{fe3} it is shown that if $\Ss\subset\R^n$ is a semialgebraic set connected by analytic paths, then there exists a Nash manifold $\Hh$ with smooth boundary and a surjective Nash map $f:\Hh\to\Ss$. However, the Nash map provided there is far from being proper even if $\Ss$ is a closed semialgebraic set and in general far from being generically injective.

\subsubsection{Nash images of the closed unit ball}
In Section \ref{altcontru} we provide an alternative proof of the following result proposed in \cite{cf1}. We denote the closed ball of $\R^n$ of center $p\in\R^n$ and radius $r>0$ with $\ol{\Bb}_n(p,r)$ and for the sake of simplicity we write $\ol{\Bb}_n:=\ol{\Bb}_n(0,1)$.

\begin{thm}[Compact Nash images]\label{main1}
Let $\Ss\subset\R^n$ be a $d$-dimensional compact semialgebraic set. The following assertions are equivalent:
\begin{itemize}
\item[(i)] There exists a Nash map $f:\R^d\to\R^n$ such that $f(\ol{\Bb}_d)=\Ss$.
\item[(ii)] $\Ss$ is connected by analytic paths.
\end{itemize}
\end{thm}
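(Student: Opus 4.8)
The implication (i) $\Rightarrow$ (ii) is the easy direction: the closed ball $\ol{\Bb}_d$ is connected by analytic paths (any two points are joined by a segment, an analytic path), a Nash map sends analytic paths to analytic paths, and the property of being connected by analytic paths is preserved under surjective maps of this type. Since $\Ss=f(\ol{\Bb}_d)$ is then connected by analytic paths, this direction is immediate.

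The substance is (ii) $\Rightarrow$ (i). The plan is to combine the weak Nash uniformization result (Theorem \ref{red3}) with the folding machinery (Theorem \ref{fold}) and a "drilling/collapsing" reduction as in \cite{cf1,cf2}. First, since $\Ss$ is connected by analytic paths it has a single such component ($\ell=1$), so Theorem \ref{red3} (in its compact version) yields a compact connected non-singular irreducible real algebraic set $X\subset\R^n$, a compact Nash manifold with boundary $\Hh\subset\R^m$ whose interior is a connected component of $X\setminus Z$ for a non-singular hypersurface $Z\subset X$, and a proper (hence surjective onto $\Ss$) Nash map $f\colon\Hh\to\Ss$ that is a Nash diffeomorphism away from a small closed semialgebraic neighborhood of a lower-dimensional "ramification" set $\Rr\subset\Ss$. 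The key geometric input that remains is to pass from a compact Nash manifold with boundary $\Hh$ of dimension $d$ to the closed ball $\ol{\Bb}_d$: one needs a Nash (or at least Nash on the interior, continuous semialgebraic on $\Hh$) surjection $\ol{\Bb}_d\to\Hh$. This is where one invokes the folding construction: $\Hh$ being a Nash manifold with (smooth) boundary is in particular a Nash manifold with corners, so Theorem \ref{fold} provides a $d$-dimensional Nash manifold $M\supset\Hh$, a normal-crossings divisor $Y$ with $\Hh\cap Y=\partial\Hh$, and a proper Nash map folding $M$ (near $\Hh$) onto $\Hh$. One then handles the handles: using the classification-type normal forms in the "In addition" part of Theorem \ref{fold} together with the standard trick of successively collapsing the generators of $H_1$ and filling higher handles (exactly the inductive "surgery by Nash maps" argument of \cite{cf1}), one builds a Nash map $\ol{\Bb}_d\to\Hh$ that is surjective; composing with $f$ gives the desired $g\colon\R^d\to\R^n$ (extend off $\ol{\Bb}_d$ arbitrarily as a Nash map, e.g. by a retraction) with $g(\ol{\Bb}_d)=\Ss$.

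More concretely, the inductive heart is: given a compact connected Nash manifold with corners $\Pp$ of dimension $d$, one attaches a Nash "bridge" to kill a generator of $\pi_1$ or to cap a boundary component, realizing the result as the Nash image of another compact Nash manifold with corners of the same dimension but "simpler" topology, with control so that the composite map onto the original $\Ss$ remains surjective and proper on the relevant pieces. After finitely many steps the source becomes Nash diffeomorphic to $\ol{\Bb}_d$ (a contractible compact Nash manifold with corners of full dimension). The role of the Nash double $D(\Qq)$ and Theorem \ref{ndnmwc} here is to give a clean model of $M$ near $\Hh$ and to make the folding maps explicit, so that each surgery step can be performed with genuine Nash (not merely $\Cont^\infty$ semialgebraic) maps and the quadratic normal form $(x_1,\dots,x_d)\mapsto(x_1^2,\dots,x_s^2,x_{s+1},\dots,x_d)$ can be used to understand the behaviour at corners.

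The main obstacle, and the place where care is needed, is controlling the \emph{combination} of the three reductions simultaneously: the uniformization map $f\colon\Hh\to\Ss$ is only a diffeomorphism off $\Tt_{\veps}$, the folding map is only a homeomorphism (not a diffeomorphism) along $\partial\Hh$, and each surgery step introduces its own ramification locus. One must arrange all these exceptional sets to be lower-dimensional and, crucially, to remain compatible under composition so that the final map $\ol{\Bb}_d\to\Ss$ is still surjective; this is a bookkeeping argument but a delicate one, and it is essentially the reason the statement is about images rather than diffeomorphisms. A secondary technical point is ensuring, at every stage, that the intermediate objects can be re-embedded as closed semialgebraic subsets of affine space (Mostowski's trick, Proposition \ref{Mos}) so that Theorems \ref{fold} and \ref{ndnmwc} keep applying.
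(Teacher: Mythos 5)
The direction (i)$\Rightarrow$(ii) and the first reduction are fine and agree with the paper: one invokes the fact that Nash images of compact sets connected by analytic paths are again connected by analytic paths, and Theorem \ref{red3} reduces (ii)$\Rightarrow$(i) to the case where $\Ss$ is a connected compact Nash manifold $\Hh$ with smooth boundary. From that point on, however, your argument has a genuine gap. The entire content of the theorem is the construction of a surjective Nash map $\ol{\Bb}_d\to\Hh$, and you replace it by an appeal to ``successively collapsing the generators of $H_1$ and filling higher handles'' by ``Nash surgery'' until the source becomes Nash diffeomorphic to $\ol{\Bb}_d$. No such surgery machinery exists in this paper or in \cite{cf1}, and the assertion that a compact Nash manifold can be simplified to a ball through finitely many steps each of which preserves being the source of a surjection onto $\Ss$ is precisely the kind of claim the theorem is designed to avoid having to prove. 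Moreover, Theorem \ref{fold} points in the wrong direction for your purposes: it produces a proper Nash map \emph{from} a Nash manifold $M$ (an open envelope) \emph{onto} a Nash manifold with corners $\Qq\subset M$, i.e.\ it folds an ambient manifold onto a corner object; it does not produce a map from a ball, or from any contractible model, onto $\Hh$.

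The paper's actual route is quite different and more concrete. After the reduction to a compact Nash manifold with boundary $\Hh$, one passes to the Nash double $D(\Hh)$, which is a connected compact Nash manifold without boundary, using the proper surjection $\pi:D(\Hh)\to\Hh$ of Lemma \ref{doubleii}; this removes the boundary entirely rather than filling it by surgery. One then covers the resulting closed manifold $M$ by finitely many Nash charts $u_i:U_i\to\R^d$ with $M=\bigcup_i u_i^{-1}(\Delta_d)$, and builds a continuous semialgebraic map $h$ from a simplicial prism $\sigma\times[0,2r-1]$ that sweeps out each chart by the cone map of Lemma \ref{simp3} and joins consecutive charts by paths. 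Finally $h$ is approximated by a genuine Nash map $F$ fixing the preimages of the sphere $\partial\ol{\Bb}_d$ in each chart, and Lemma \ref{approxb} (surjectivity of approximations of homeomorphisms between manifolds with boundary) guarantees that $F$ still covers every $u_i^{-1}(\ol{\Bb}_d)$, hence all of $M$; the polynomial relation between the prism and the ball from \cite[Cor.2.8]{fu6} converts this into a map on $\ol{\Bb}_d$. These ingredients --- the doubling step, the chart-sweeping construction, and the surjectivity-preserving approximation lemma --- are absent from your proposal, and without them the proof does not go through.
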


We refer the reader to an unconventional citation \cite{whi} in `AMS Feature Column' concerning some related result \cite{cf1} known as Teddy-Lambkin's Theorem (B\"archen-Sch\"afchen's Theorem). 

\subsubsection{Explicit Nash models for orientable compact smooth surfaces}

Theorem \ref{fold} provides an explicit procedure to construct Nash models for orientable compact smooth surfaces, see \S\ref{surfaces}.

\subsubsection{Approximation of continuous semialgebraic maps by Nash maps}

Approximation of maps of certain type by maps of a better subtype is an important tool in Mathematics. A celebrated example is Whitney's approximation theorem \cite{wh}: {\em a continuous map (defined on a locally compact subset of an affine space) whose target space is a ${\mathcal C}^r$ submanifold $M$ of $\R^n$ for $r\in\N\cup\{\infty,\omega\}$ can be approximated by a ${\mathcal C}^r$ map whose target space is $M$}. Whitney approximation theorem has been extended in many directions, like the case of manifolds with boundary (using partitions of unit and collars, \cite[Ch. III, Thm. 6.1]{orr}). New results on approximation in Whitney's style have been proved in \cite{fgh2} when the target space has `singularities' and admits `nice' triangulations. As a consequence: {\em every continuous map between a locally compact subset $X\subset\R^m$ and a smooth manifold with corners $\Qq\subset\R^n$ can be approximated by a smooth map with respect to the strong Whitney's $\Cont^0$ topology \cite[Cor.1.10]{fgh2}}.

There are also relevant results concerning approximation in the semialgebraic setting. Efroymson \cite{ef} showed that every continuous semialgebraic function on a Nash manifold can be approximated by Nash functions. Shiota \cite{sh} improved this result in many directions. He proved relative and absolute versions with (strong) control on finitely many derivatives of the approximation map. Approximation results also appear when the target space has singularities. In \cite[Thm.1.7]{bfr} the target space is a Nash set with monomial singularities and approximation of continuous semialgebraic maps by Nash maps is proved under certain restrictions. In \cite{fgh} it is presented differentiable approximation of continuous semialgebraic maps when the target space admits `nice' triangulations. In \cite{ca} it is shown that the approximation can be made without changing the image. As a consequence: {\em Every continuous semialgebraic map $f$ between a compact semialgebraic set $\Ss\subset\R^m$ and a Nash manifold with corners $\Qq\subset\R^n$, can be approximated by an ${\mathcal S}^\mu$ map $g$ with respect to ${\mathcal S}^0$ topology such that $g(\Ss)=f(\Ss)$ \cite[Thm.1.3 \& Thm.1.4]{fgh} and \cite[Thm.1.4]{ca}}. To apply \cite[Thm.1.4]{fgh} we need in addition that the target space is compact. But this is not limiting, because as $X$ is compact, its image $f(\Ss)$ under a continuous semialgebraic map $f:\Ss\to\Qq$ is compact. Thus, we can substitute $\Qq$ by a compact Nash manifold with corners of the type $\Qq_r:=\Qq\cap\Bb(0,r)\subset\Qq$ for a suitable radius $r>0$ that in addition contains $f(\Ss)$. The suitable radius $r>0$ can be found as an application of Sard's theorem. Recall that Paw\l{}ucki has shown that (compact) semialgebraic sets always admit `nice triangulations' \cite{p1,p2}, so the results of \cite{fgh} and \cite{ca} always hold for continuous semialgebraic maps defined on a compact semialgebraic set.

The techniques developed in \cite{fgh} make an essential use of ${\mathcal S}^\mu$ partitions of unity, so the proofs proposed there do not extend to Nash approximation. However, as an additional application of Theorem \ref{fold}, we prove approximation of continuous semialgebraic maps by Nash maps when the target space is a Nash manifold with corners.

\begin{thm}[Nash approximation I]\label{approxn}
Let $\Ss\subset\R^m$ be a locally compact semialgebraic set, $\Qq\subset\R^n$ a Nash manifold with corners and $f:\Ss\to\Qq$ a continuous semialgebraic map. Then there exist Nash maps $g:\Ss\to\Qq$ arbitrarily close to $f$ (with respect to the ${\mathcal S}^0$ topology).
\end{thm}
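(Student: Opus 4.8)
The plan is to reduce the case of a Nash manifold with corners target to the case of a Nash manifold target, which is classically known (Efroymson, Shiota). Given $f\colon\Ss\to\Qq$, the first step is to realize $\Qq$ as a closed semialgebraic subset of $\R^n$ by Mostowski's trick (Proposition \ref{Mos}), and then invoke Theorem \ref{fold} to obtain: a $d$-dimensional Nash manifold $M\subset\R^n$ containing $\Qq$ as a closed subset, a Nash normal-crossings divisor $Y\subset M$ that is the Nash closure of $\partial\Qq$ with $\Qq\cap Y=\partial\Qq$, an open semialgebraic neighborhood $W\subset M$ of $\Qq$ with $\cl(W)\subset M$, and a proper Nash map $h\colon\cl(W)\to\Qq$ with $h(\Qq)=\Qq$, $h(\Int(\Qq))=\Int(\Qq)$, $h|_{\Int(\Qq)}$ a Nash diffeomorphism onto $\Int(\Qq)$, and $h|_\Qq$ a semialgebraic homeomorphism close to $\id_\Qq$; moreover $h$ is, in suitable Nash coordinates around each boundary point, the folding map $(x_1,\ldots,x_d)\mapsto(x_1^2,\ldots,x_s^2,x_{s+1},\ldots,x_d)$.

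The second step is to lift $f$ to a continuous semialgebraic map $\widetilde f\colon\Ss\to M$ with $h\circ\widetilde f=f$. Over $\Int(\Qq)$ this is forced, since $h|_{\Int(\Qq)}$ is a diffeomorphism; the content is to produce a \emph{continuous} (indeed $\Cont^0$ semialgebraic) lift across $f^{-1}(\partial\Qq)$. This is where the local folding normal form is essential: near a point of $f^{-1}(\partial\Qq)$ the map $h$ looks like squaring in the first $s$ coordinates, and a continuous semialgebraic choice of square roots can be made because in those coordinates $f$ takes values in the closed orthant $\{x_1\geq0,\ldots,x_s\geq0\}$ (that is the local model of $\Qq$ inside $M$), so $\widetilde f$ is obtained by replacing the nonnegative coordinate functions $f_1,\ldots,f_s$ by $\sqrt{f_1},\ldots,\sqrt{f_s}$, which are continuous and semialgebraic on $\Ss$. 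One must check these local lifts glue to a global continuous semialgebraic $\widetilde f\colon\Ss\to M$; gluing is unproblematic since on overlaps both candidates equal the unique lift dictated by the interior, extended by continuity. I expect this lifting step, and in particular verifying that the naive square-root lift is genuinely semialgebraic and continuous up to the boundary (rather than merely on a dense open set), to be the main technical obstacle.

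The third step is to approximate $\widetilde f\colon\Ss\to M$ by a Nash map $\widetilde g\colon\Ss\to M$ in the $\Cont^0$ (i.e. ${\mathcal S}^0$) topology; since $M$ is a Nash manifold and $\Ss$ is locally compact semialgebraic, this is available from the classical relative Nash approximation theorems (Efroymson \cite{ef}, Shiota \cite{sh}; see also \cite{bfr}). We can even arrange $\widetilde g$ to map into $W$, since $\widetilde f(\Ss)\subset W$ is at positive semialgebraic distance from $M\setminus W$ and approximation can be taken finer than this distance function. Finally set $g:=h\circ\widetilde g\colon\Ss\to\Qq$. Because $h\colon\cl(W)\to\Qq$ is a Nash map, $g$ is Nash; and because $h$ is (in particular) continuous and $g=h\circ\widetilde g$ while $f=h\circ\widetilde f$, continuity of $h$ together with the closeness of $\widetilde g$ to $\widetilde f$ yields that $g$ is close to $f$ in the ${\mathcal S}^0$ topology. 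A small additional point, handled by a uniform continuity / Łojasiewicz estimate for the proper Nash map $h$ on $\cl(W)$ (or by taking $\widetilde g$ close to $\widetilde f$ with respect to a sufficiently fine positive semialgebraic function pulled back through $h$), is needed to convert the prescribed target tolerance $\veps\colon\Ss\to\R$ on $g$ into a tolerance on $\widetilde g$; this is routine given that $h$ is proper and semialgebraic. This completes the plan.
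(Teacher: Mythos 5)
Your strategy is sound and the resulting proof is correct in outline, but it is worth comparing it with the route the paper actually takes, because the two differ precisely at the step you single out as the main technical obstacle. The paper does not use Theorem \ref{fold}; it uses the Nash double (Theorem \ref{ndnmwc}): writing $\Qq=\{h_1\geq0,\ldots,h_\ell\geq0\}$ with the equations of Lemma \ref{equation}, the section $\pi_+^{-1}:\Qq\to D(\Qq)$, $x\mapsto(x,\sqrt{h_1(x)},\ldots,\sqrt{h_\ell(x)})$, is a \emph{global, explicit} continuous semialgebraic lift of the identity through the projection $\pi:D(\Qq)\to\Qq$. One then extends $\pi_+^{-1}\circ f$ to an open semialgebraic neighborhood $U$ of $\Ss$ via a Delfs--Knebusch retraction, approximates it by a Nash map $G:U\to D(\Qq)$ into the Nash manifold $D(\Qq)$, and sets $g:=\pi\circ G|_{\Ss}$; closeness follows from the continuity of $F\mapsto\pi\circ F$. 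This one-line formula for the lift completely sidesteps the local square-root gluing you propose. Your lifting step, as written, is the one shaky point: the ``unique lift dictated by the interior, extended by continuity'' does not determine the lift on an open subset of $\Ss$ that $f$ maps into $\partial\Qq$ (there $f^{-1}(\Int(\Qq))$ need not be dense), and checking that the locally chosen nonnegative square roots agree on overlaps requires knowing that the charts of Theorem \ref{fold} respect $\Qq$ in a compatible way, which the statement of the theorem does not record. Fortunately there is an immediate repair inside your own framework: Theorem \ref{fold}(iii) already asserts that $h|_{\Qq}:\Qq\to\Qq$ is a semialgebraic homeomorphism, so $\widetilde f:=(h|_{\Qq})^{-1}\circ f:\Ss\to\Qq\subset M$ is a global continuous semialgebraic lift with $h\circ\widetilde f=f$, and no local construction is needed. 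With that substitution your remaining steps (arranging $\widetilde g(\Ss)\subset W$ by approximating finer than the pullback of $\dist(\cdot,M\setminus W)$, and transferring the tolerance through $h$ via \cite[Rmk.II.1.5]{sh}) go through, and also the transfer back through the Mostowski embedding is harmless since its inverse is a linear projection. In short: your approach buys nothing over the paper's but costs an extra lifting argument; the paper's choice of $D(\Qq)$ over the folded envelope exists exactly to make that lift a formula.
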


The previous result has been improved in \cite{cf3} where we have shown, using more sophisticated techniques, the following result.

\begin{thm}[Nash approximation II]
Let $\Ss\subset\R^m$ be a locally compact semialgebraic set, let $\Qq\subset\R^n$ be a Nash manifold with corners and let $f:\Ss\to\Qq$ be the restriction of an ${\mathcal S}^\mu$ map $F:\Omega\to\R^n$, where $\Omega$ is an open semialgebraic neighborhood of $\Ss$ in $\R^n$. Then there exist Nash maps $g:\Ss\to\Int(\Qq)$ arbitrarily close to $f$ with respect to the ${\mathcal S}^\mu$ topology.
\end{thm}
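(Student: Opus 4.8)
The plan is to split the approximation into two independent steps: first displace $f$ into the interior $\Int(\Qq)$ through an $\mathcal{S}^\mu$ map that stays uniformly away from $\partial\Qq$, and then, since $\Int(\Qq)$ is a genuine Nash manifold, finish by Shiota's relative Nash approximation theorem \cite{sh}.

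I would begin with the standard reductions. By Mostowski's trick (Proposition~\ref{Mos}) we may assume $\Qq$ is a closed semialgebraic subset of $\R^n$; composing $f$ and $F$ with the corresponding Nash embedding distorts the relevant $\mathcal{S}^\mu$ distances only by a controlled factor, so nothing is lost. Apply Theorem~\ref{fold} to $\Qq$: it yields a $d$-dimensional Nash manifold $M\subset\R^n$ containing $\Qq$ as a closed subset, a Nash normal-crossings divisor $Y\subset M$ that is the Nash closure of $\partial\Qq$ (with $\Qq\cap Y=\partial\Qq$), an open semialgebraic neighbourhood $W$ of $\Qq$ in $M$ with $\cl(W)\subset M$, a proper Nash map $\pi\colon\cl(W)\to\Qq$ with the stated properties, and, around each point of $\partial\Qq$, Nash charts of $M$ in which $\Qq$ is $\{x_1\ge0,\dots,x_s\ge0\}$, $Y$ is $\{x_1\cdots x_s=0\}$, and $\pi$ has the normal form $(x_1,\dots,x_d)\mapsto(x_1^2,\dots,x_s^2,x_{s+1},\dots,x_d)$. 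Fix a Nash retraction $\rho$ of an open semialgebraic neighbourhood of $\Qq$ in $\R^n$ onto $W$; shrinking $\Omega$ so that $F(\Omega)$ lies in that neighbourhood, $F':=\rho\circ F\colon\Omega\to W$ is an $\mathcal{S}^\mu$ map with $F'|_\Ss=f$, and it suffices to approximate $F'$ in the $\mathcal{S}^\mu$ topology by Nash maps $\Omega\to M$ whose restriction to $\Ss$ takes values in $\Int(\Qq)$.

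For step (1) I want a strictly positive continuous semialgebraic function $\delta$ on $\Ss$ and an $\mathcal{S}^\mu$ map $F_1\colon\Omega\to M$, $\mathcal{S}^\mu$-close to $F'$ on $\Ss$, with $\dist(F_1(x),\partial\Qq)>\delta(x)$ for every $x\in\Ss$. In each of the charts above the obvious move is the translation $(x_1,\dots,x_d)\mapsto(x_1+t,\dots,x_s+t,x_{s+1},\dots,x_d)$, which pushes $\Qq$ strictly into $\Int(\Qq)$; the difficulty is to glue these local translations into one global $\mathcal{S}^\mu$ displacement that is inward-pointing along \emph{every} corner stratum simultaneously, since the faces of $\Qq$ overlap along the corners. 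I would realize this globally on the Nash double $D(\Qq)$ of Theorem~\ref{ndnmwc}, which is a Nash manifold containing a copy $\Qq_+$ of $\Qq$ and Nash diffeomorphic around $\Qq_+$ to $M$: there one has enough room to write down a single Nash vector field that is inward along all faces of $\Qq_+$ at once and whose flow, in suitable Nash coordinates near $\partial\Qq_+$, is the translation $(x_1,\dots,x_d)\mapsto(x_1+t,\dots,x_s+t,x_{s+1},\dots,x_d)$; flowing along it for a small positive multiple of $\delta$ gives a Nash map $\tau$ with $\tau(\Qq)\subseteq\Int(\Qq)$ and the required quantitative control, and we set $F_1:=\tau\circ F'$. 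For step (2), applying Shiota's relative approximation theorem to $F_1\colon\Omega\to\Int(\Qq)$ — whose target is now an honest Nash manifold — within $\mathcal{S}^\mu$-tolerance $\tfrac12\delta$ and with the required control on the first $\mu$ derivatives produces a Nash map $H\colon\Omega\to\Int(\Qq)$ with $\dist(H(x),\partial\Qq)>\tfrac12\delta(x)>0$ on $\Ss$; then $g:=H|_\Ss\colon\Ss\to\Int(\Qq)$ is the desired Nash approximation of $f$.

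The main obstacle is the global gluing in step (1): patching the local inward translations compatibly along the corner strata is exactly where \cite{fgh} uses $\mathcal{S}^\mu$ partitions of unity, which must be avoided here lest semialgebraicity be destroyed before the final Nash approximation — this is what forces the detour through the Nash double (the `more sophisticated techniques' of \cite{cf3}). It is really the requirement that $g$ land in $\Int(\Qq)$ that causes this: for the $\mathcal{S}^0$ statement of Theorem~\ref{approxn}, where $g$ need only take values in $\Qq$, no displacement is needed — one approximates $F'\colon\Omega\to M$ by a Nash map $H$ so close that $H(\Ss)\subseteq W$ and sets $g:=\pi\circ(H|_\Ss)\colon\Ss\to\pi(W)\subseteq\Qq$, which is $\mathcal{S}^0$-close to $f$ because $\pi|_\Qq$ is $\mathcal{S}^0$-close to the identity; this argument does not upgrade to $\mathcal{S}^\mu$ precisely because $\pi$ has degenerate differential along $\partial\Qq$, which is also why getting the image inside $\Int(\Qq)$ in the $\mathcal{S}^\mu$ topology requires the separate displacement step.
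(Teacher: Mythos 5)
First, a point of comparison: this paper does not actually prove Nash approximation~II — it is quoted from \cite{cf3} with the remark that it requires ``more sophisticated techniques'', and only the ${\mathcal S}^0$ statement (Theorem \ref{approxn}) is proved here. So your proposal has to be judged on its own merits, and there it has a genuine gap precisely at the step you yourself identify as the crux.

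The fatal point is the construction of the inward displacement $\tau$. You propose to obtain it as the time-$\delta$ flow of a Nash vector field that points inward along all faces of $\Qq_+$. But the time-$t$ map of a Nash (or even ${\mathcal S}^\mu$ semialgebraic) vector field is in general \emph{not} semialgebraic: for $\xi=\x\y\,\partial_{\x}$ on $\R^2$ the time-$t$ flow is $(x,y)\mapsto(xe^{ty},y)$, which is not a Nash map for any $t\neq0$. Avoiding integration of vector fields is exactly why the Nash category is delicate here (it is why Shiota's collars and tubular neighbourhoods are built by hand rather than by flows), so ``flowing along it gives a Nash map $\tau$'' cannot stand — and even an ${\mathcal S}^\mu$ semialgebraic $\tau$, which would suffice before the final Shiota approximation, does not come out of a flow. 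Moreover, the detour through $D(\Qq)$ buys nothing for this purpose: by Theorem \ref{ndnmwc}(v) a neighbourhood of $\Qq_+$ in $D(\Qq)$ is Nash diffeomorphic to $M$, so any globally inward displacement constructed there transports back to one on $M$ and vice versa; the problem of gluing the local translations $(x_1,\ldots,x_d)\mapsto(x_1+t,\ldots,x_s+t,x_{s+1},\ldots,x_d)$ compatibly along overlapping corner strata — without ${\mathcal S}^\mu$ partitions of unity destroying semialgebraicity, and without the degenerate folding map $\pi$ destroying ${\mathcal S}^\mu$ closeness — is left entirely unresolved. Since the rest of your argument (retract onto $M$, then apply Shiota's relative ${\mathcal S}^\mu$ approximation with a quantitative lower bound on $\dist(\cdot,\partial\Qq)$ so that the Nash approximant lands in $\Int(\Qq)$) is routine once $\tau$ exists, the displacement \emph{is} the theorem, and the proposal does not prove it.
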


\subsection{Structure of the article}
The article is organized as follows. In Section \ref{s2} we present some preliminary results: (1) Nash retractions (of open semialgebraic subsets of a Nash manifold onto a Nash hypersurface) compatible with a Nash normal-crossings divisor \cite[Prop.4.1]{fgh}, (2) Compatible Nash collars (Lemma \ref{style}) and Nash doubles for Nash manifolds with (smooth) boundary \cite[\S4.B.1, \S4.B.2]{fe3}, (3) Basic properties on Nash manifolds with corners and Nash equations of the irreducible components of their boundaries, (4) Non-singular real algebraic models of Nash manifolds via Artin-Mazur's theorem (Lemma \ref{cnma}). In Section \ref{s3} we prove Theorem \ref{fold} and in Section \ref{s4} we construct the Nash double of a Nash manifold with corners (Theorem \ref{ndnmwc}). Section \ref{s5} is devoted to prove Theorems \ref{red3}, \ref{main1} and \ref{approxn}. We also present an explicit procedure to construct Nash models for orientable compact smooth surfaces.

\section{Preliminaries}\label{s2}

We present in this section several tools that will be used in the subsequent sections.

\subsection{Mostowski's trick}

We begin with a well-known trick of Mostowski, because it allows to embedded via a Nash map locally compact semialgebraic sets of an affine space as closed semialgebraic sets of a (maybe larger) affine space. Given a semialgebraic set $\Ss\subset\R^m$, we define its {\em exterior boundary} as $\delta^\bullet\Ss:=\cl(\Ss)\setminus\Ss$.

\begin{prop}[Mostowski's trick]\label{Mos}
Let $\Ss\subset\R^m$ be a locally compact semialgebraic set. Then there exists a Nash map $H:\R^m\setminus\delta^\bullet\Ss\to\R^{m+1}$ such that the image $H(\Ss)$ is a closed semialgebraic subset of $\R^{m+1}$ Nash diffeomorphic to $\Ss$. 
\end{prop}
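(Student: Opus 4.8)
The plan is to exploit the classical Mostowski construction: since $\Ss\subset\R^m$ is locally compact, its exterior boundary $\delta^\bullet\Ss=\cl(\Ss)\setminus\Ss$ is a closed semialgebraic subset of $\R^m$ that is disjoint from $\Ss$, and the function $x\mapsto\dist(x,\delta^\bullet\Ss)$ is a continuous semialgebraic function on $\R^m\setminus\delta^\bullet\Ss$ which is strictly positive exactly on $\R^m\setminus\cl(\delta^\bullet\Ss)\supset\Ss$ and tends to $0$ as one approaches $\delta^\bullet\Ss$. The distance function is only continuous semialgebraic, not Nash, so the first step is to replace it by a Nash function: by a Nash (semialgebraic) approximation/partition-of-unity argument, or more directly by choosing polynomials defining $\delta^\bullet\Ss$, one produces a Nash function $h:\R^m\setminus\delta^\bullet\Ss\to\R$ with $h>0$ on $\R^m\setminus\delta^\bullet\Ss$ and $1/h$ unbounded along $\delta^\bullet\Ss$ — concretely, if $\delta^\bullet\Ss=\{g=0\}$ for some polynomial $g$ (using that closed semialgebraic sets are the zero sets of a single polynomial, e.g. a sum of squares), we may simply take $h:=g$ restricted to $\R^m\setminus\delta^\bullet\Ss$, where $h\ne 0$.

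Next I would define the map
$$
H:\R^m\setminus\delta^\bullet\Ss\longrightarrow\R^{m+1},\qquad H(x):=\Big(x,\frac{1}{h(x)}\Big),
$$
which is visibly a Nash map (each component is a Nash function on the open semialgebraic set $\R^m\setminus\delta^\bullet\Ss$, since $h$ does not vanish there). It is injective because its first $m$ coordinates already are, and it is a Nash diffeomorphism onto its image $\Gamma:=\{(x,t):x\in\R^m\setminus\delta^\bullet\Ss,\ t\,h(x)=1\}$, the graph of $1/h$, with Nash inverse given by the projection onto the first $m$ coordinates. In particular $H|_\Ss:\Ss\to H(\Ss)$ is a Nash diffeomorphism.

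It remains to check that $H(\Ss)$ is closed in $\R^{m+1}$, which is the only point requiring a little care. Suppose a sequence $H(x_k)=(x_k,1/h(x_k))$ converges to some $(a,b)\in\R^{m+1}$. Then $x_k\to a$ in $\R^m$ and $1/h(x_k)\to b$. If $a\in\delta^\bullet\Ss$ then $h(x_k)\to 0$ (as $h$ extends continuously by $0$ to $\delta^\bullet\Ss$, being a polynomial), so $1/h(x_k)\to\infty$, contradicting convergence; hence $a\notin\delta^\bullet\Ss$, so $a\in\R^m\setminus\delta^\bullet\Ss$, and by continuity $b=1/h(a)$, i.e. $(a,b)=H(a)$. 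Moreover $a\in\cl(\Ss)$ and $a\notin\delta^\bullet\Ss=\cl(\Ss)\setminus\Ss$, so $a\in\Ss$ and $(a,b)\in H(\Ss)$. Thus $H(\Ss)$ is closed. (One should double-check that $\Ss$ is indeed bounded away from $\delta^\bullet\Ss$ only locally, which is all that is needed; the argument above is purely sequential/topological and semialgebraicity of all the sets involved is automatic.) The main — though mild — obstacle is the first step, ensuring the auxiliary function $h$ is genuinely Nash (not merely continuous semialgebraic) while still blowing up along all of $\delta^\bullet\Ss$; using a polynomial equation for the closed set $\delta^\bullet\Ss$ sidesteps any need for Efroymson-type approximation here.
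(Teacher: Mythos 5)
Your overall strategy is exactly the paper's: take a function $h$ vanishing precisely on $\delta^\bullet\Ss$ and Nash off it, and embed via the graph of $1/h$. But the step where you produce $h$ contains a genuine error. You assert that ``closed semialgebraic sets are the zero sets of a single polynomial''; this is false. Zero sets of polynomials are algebraic sets, and a closed semialgebraic set need not be algebraic. Concretely, for $\Ss:=\{(x,y)\in\R^2:\ y>0\}\cup\{(x,0):\ x<0\}$, which is locally compact, one has $\delta^\bullet\Ss=\{(x,0):\ x\geq0\}$, a closed half-line whose Zariski closure is the whole line $\{y=0\}$; no polynomial vanishes exactly on it. Since your closedness argument for $H(\Ss)$ hinges on $h$ being (the restriction of) a globally continuous function that vanishes exactly on $\delta^\bullet\Ss$ --- so that $h(x_k)\to0$ whenever $x_k\to a\in\delta^\bullet\Ss$ --- the ``polynomial'' route does not close the proof. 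Your alternative suggestion (``approximation/partition-of-unity'') is left entirely vague; it could be made to work (e.g.\ approximate $\tfrac12\dist(\cdot,\delta^\bullet\Ss)$ on the Nash manifold $\R^m\setminus\delta^\bullet\Ss$ by a Nash function $h$ with $0<h\leq\dist(\cdot,\delta^\bullet\Ss)$), but as written it is not a proof.

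The paper avoids this by invoking Mostowski's separation lemma \cite[Lem.6]{mo}: for any closed semialgebraic set $C\subset\R^m$ there is a \emph{continuous semialgebraic} function $h:\R^m\to\R$ with $\{h=0\}=C$ that is Nash on $\R^m\setminus C$. With that $h$ in hand, your map $H(x)=(x,1/h(x))$, the identification of $H(\Ss)$ with $\{th(x)=1\}\cap(\cl(\Ss)\times\R)$ (an intersection of closed sets), and the Nash inverse given by the projection all go through exactly as you and the paper describe. So the argument is salvageable, but the missing ingredient is precisely this lemma (or an explicit Nash minorant of the distance function), not a polynomial equation for $\delta^\bullet\Ss$.
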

\begin{proof}
As $\Ss$ is locally compact in $\R^m$, its exterior boundary $\delta^\bullet\Ss$ is a closed semialgebraic subset of $\R^m$. By \cite[Lem.6]{mo} there exists a continuous semialgebraic function $h:\R^m\to\R$ such that $\delta^\bullet\Ss=\{h=0\}$ and $h$ is Nash on the open semialgebraic set $\R^m\setminus\delta^\bullet\Ss$. Consider the Nash diffeomorphism 
$$
H:\R^m\setminus\delta^\bullet\Ss\to M:=\{(x,t)\in\R^m\times\R:\ th(x)=1\}\subset\R^{m+1},\ x\mapsto\Big(x,\frac{1}{h(x)}\Big),
$$
whose inverse is the restriction to the Nash manifold $M$ (which is a closed subset of $\R^{m+1}$) of the projection $\pi:\R^m\times\R\to\R^m$ onto the first factor. Observe that $H(\Ss)$ is the closed semialgebraic set $\Tt:=M\cap(\cl(\Ss)\times\R)$, as required.
\end{proof}

The following semialgebraic separation result will be useful in the sequel also combined with Mostowski's trick.

\begin{lem}\label{co}
Let $\Ss\subset\R^n$ be a locally compact semialgebraic set and let $\Tt\subset\Ss$ be a semialgebraic subset that is a closed semialgebraic subset of $\R^n$. Then there exists an open semialgebraic neighborhood $U\subset\Ss$ of $\Tt$ such that $\cl(U)\subset\Ss$. 
\end{lem}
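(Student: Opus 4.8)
The plan is to exploit the local compactness of $\Ss$ together with the fact that $\Tt$ is closed in $\R^n$, and then reduce to a separation statement in an ambient affine space. First I would recall that, since $\Ss$ is locally compact, its exterior boundary $\delta^\bullet\Ss=\cl(\Ss)\setminus\Ss$ is a closed semialgebraic subset of $\R^n$ disjoint from $\Ss$, hence in particular disjoint from $\Tt$. Now both $\Tt$ and $\delta^\bullet\Ss$ are closed semialgebraic subsets of $\R^n$ and they are disjoint. By semialgebraic normality of $\R^n$ (or directly: the function $x\mapsto\dist(x,\Tt)/(\dist(x,\Tt)+\dist(x,\delta^\bullet\Ss))$ is continuous semialgebraic, well defined since the two closed sets are disjoint), there is a continuous semialgebraic function $\theta:\R^n\to[0,1]$ with $\theta|_\Tt\equiv 0$ and $\theta|_{\delta^\bullet\Ss}\equiv 1$; actually it is cleaner to take $\theta(x):=\dist(x,\delta^\bullet\Ss)$, which is a continuous semialgebraic function on $\R^n$, strictly positive on $\Ss$ (as $\Ss\cap\delta^\bullet\Ss=\varnothing$ and $\delta^\bullet\Ss$ is closed) and, being defined on all of $\cl(\Ss)$, it vanishes on $\cl(\Ss)\setminus\Ss=\delta^\bullet\Ss$.

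Next I would choose a strictly positive continuous semialgebraic function that is, conversely, \emph{small} near $\Tt$: since $\Tt$ is closed in $\R^n$ the function $x\mapsto\dist(x,\Tt)$ is continuous semialgebraic on $\R^n$. The idea is to set
$$
U:=\{x\in\Ss:\ \dist(x,\delta^\bullet\Ss)>\tfrac12\dist(x,\Tt)\}\quad\text{(or a similar strict inequality),}
$$
or even more simply a tube $U:=\{x\in\Ss:\ \dist(x,\Tt)<\dist(x,\delta^\bullet\Ss)\}$. This $U$ is open in $\Ss$, semialgebraic, and contains $\Tt$ because on $\Tt$ we have $\dist(x,\Tt)=0<\dist(x,\delta^\bullet\Ss)$. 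It remains to check $\cl(U)\subset\Ss$, equivalently $\cl(U)\cap\delta^\bullet\Ss=\varnothing$: if $y\in\cl(U)$, take a sequence (or use the curve selection lemma) $x_k\in U$ with $x_k\to y$; then $\dist(x_k,\Tt)<\dist(x_k,\delta^\bullet\Ss)$, and passing to the limit $\dist(y,\Tt)\leq\dist(y,\delta^\bullet\Ss)$. If $y\in\delta^\bullet\Ss$ this forces $\dist(y,\Tt)\leq 0$, i.e. $y\in\Tt\subset\Ss$, contradicting $y\in\delta^\bullet\Ss=\cl(\Ss)\setminus\Ss$. Hence $\cl(U)\cap\delta^\bullet\Ss=\varnothing$, so $\cl(U)\subset\cl(\Ss)\setminus\delta^\bullet\Ss=\Ss$.

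The one subtlety to be careful about is that a priori the inequality might degenerate at points where $\dist(x,\delta^\bullet\Ss)=+\infty$ does not occur but $\delta^\bullet\Ss=\varnothing$ is possible (when $\Ss$ is already closed in $\R^n$); in that case $\dist(x,\delta^\bullet\Ss)$ is not defined, but then we simply take $U:=\Ss$, since $\cl(\Ss)=\Ss\subset\Ss$. A second point to double-check is that $U$ is really \emph{semialgebraic}: it is a semialgebraic condition on $x$ because $\dist(\cdot,\Tt)$ and $\dist(\cdot,\delta^\bullet\Ss)$ are semialgebraic functions, and the set of $x\in\Ss$ on which one strictly exceeds the other is then semialgebraic. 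I expect the main (and only genuine) obstacle to be the verification that $\cl(U)$ stays inside $\Ss$, i.e. that the tube does not accidentally reach the exterior boundary; this is exactly where local compactness of $\Ss$ (which guarantees $\delta^\bullet\Ss$ is closed and disjoint from $\Ss$) and closedness of $\Tt$ in $\R^n$ (which guarantees $\dist(\cdot,\Tt)$ vanishes exactly on $\Tt$) are both used, and the limiting argument above settles it. Everything else is routine semialgebraic bookkeeping.
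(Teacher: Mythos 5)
Your proof is correct and follows essentially the same route as the paper: both rest on the observation that local compactness makes $\delta^\bullet\Ss=\cl(\Ss)\setminus\Ss$ a closed semialgebraic set disjoint from $\Tt$, and then separate the two and intersect with $\Ss$. The only difference is cosmetic: the paper invokes semialgebraic separation abstractly to get an open $V$ with $\cl(V)\cap\delta^\bullet\Ss=\varnothing$, whereas you instantiate that separation explicitly with the tube $\{\dist(\cdot,\Tt)<\dist(\cdot,\delta^\bullet\Ss)\}$ and verify the closure condition by a limiting argument.
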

\begin{proof}
As $\Ss$ is locally compact, the semialgebraic set $\cl(\Ss)\setminus\Ss$ is a closed subset of $\R^n$. As $\Tt\subset\Ss$, the closed semialgebraic sets $\Tt$ and $\cl(\Ss)\setminus\Ss$ are disjoint. Thus, there exists an open semialgebraic set $V\subset\R^n$ such that $\Tt\subset V\subset\cl(V)\subset\R^n\setminus(\cl(\Ss)\setminus\Ss)$. Define $U:=\Ss\cap V$, which is an open semialgebraic neighborhood of $\Tt$ in $\Ss$, and observe that $\cl(U)\subset\cl(\Ss)\cap\cl(V)\subset\cl(\Ss)\cap(\R^n\setminus(\cl(\Ss)\setminus\Ss))=\Ss$, as required.
\end{proof}

\subsection{Nash manifolds with boundary}
A Nash manifold $M\subset\R^n$ with (smooth) boundary is a semialgebraic set that is smooth submanifold with smooth boundary. We first present the construction of the Nash double of a Nash manifold with boundary and next Nash collars of the boundary of a Nash mnaifold with boundary compatible with a Nash normal-crossings divisor.

\subsubsection{Nash doubles of Nash manifolds with boundary.} 
Doubling a smooth manifold with boundary is a standard tool in differential topology. The Nash construction has been treated in \cite[Thm.VI.2.1]{sh} (compact case) and in \cite[\S4.B.1, \S4.B.2]{fe3} (general case). We recall next the main statements and refer the reader to \cite[\S4.B.1, \S4.B.2]{fe3} for the precise details (Figure \ref{fig11}).

\begin{lem}[Nash double, {\cite[\S4.B.1]{fe3}}]\label{double} 
Let $\Hh\subset\R^n$ be a $d$-dimensional Nash manifold with boundary $\partial\Hh$ and let $h:M\to\R$ be a Nash equation for $\partial\Hh$ such that $\partial\Hh=\{h=0\}$, $\Int(\Hh)=\{h>0\}$ and $d_xh:T_x\Hh\to\R$ is surjective for all $x\in\partial\Hh$. Then, the semialgebraic set
$$
D(\Hh):=\{(x,t)\in\Hh\times\R:\ t^2-h(x)=0\}\subset\R^{n+1}
$$ 
is a $d$-dimensional Nash manifold that contains $\partial\Hh\times\{0\}$ as the Nash subset $\{t=0\}$.
\end{lem}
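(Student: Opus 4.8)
The plan is to verify directly that the semialgebraic set $D(\Hh)=\{(x,t)\in\Hh\times\R:\ t^2-h(x)=0\}$ is a smooth submanifold of $\R^{n+1}$ of dimension $d$, and then to identify the subset $\{t=0\}$ with $\partial\Hh\times\{0\}$. First I would check smoothness away from the locus $t=0$: on the open set $\{t\ne 0\}\cap D(\Hh)$ one has $h(x)=t^2>0$, so $x\in\Int(\Hh)$, and $D(\Hh)$ is there locally the graph of the Nash function $t\mapsto\pm\sqrt{h(x)}$ over the Nash manifold $\Int(\Hh)$ (choosing the sign according to the sign of $t$); alternatively, apply the implicit function theorem to $F(x,t):=t^2-h(x)$, whose partial derivative $\partial F/\partial t=2t$ is nonzero there, obtaining a Nash chart of dimension $d=\dim\Int(\Hh)$. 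The more delicate part is the behaviour near $t=0$, i.e. near $\partial\Hh\times\{0\}$, and this is where I expect the main obstacle to lie.

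Near a point $(x_0,0)$ with $x_0\in\partial\Hh$, I would use the hypothesis that $d_{x_0}h:T_{x_0}\Hh\to\R$ is surjective. Since $M\subset\R^n$ is an ambient $d$-dimensional Nash manifold containing $\Hh$ as a closed subset with $\partial\Hh=\{h=0\}$ and $\Int(\Hh)=\{h>0\}$, the surjectivity of $d_{x_0}h$ lets me choose Nash local coordinates $(u_1,\ldots,u_d)$ on a neighborhood of $x_0$ in $M$ in which $h=u_1$, hence $\Hh=\{u_1\ge 0\}$ and $\partial\Hh=\{u_1=0\}$ locally. In these coordinates $D(\Hh)$ is locally $\{(u_1,\ldots,u_d,t):\ t^2=u_1,\ u_1\ge 0\}$, which is precisely the graph $u_1=t^2$ over the $(t,u_2,\ldots,u_d)$-space $\R^d$ (no constraint on $t$, since $t^2\ge0$ automatically), and hence a Nash manifold of dimension $d$. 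Concretely, the map $(t,u_2,\ldots,u_d)\mapsto(t^2,u_2,\ldots,u_d,t)$ is a Nash diffeomorphism onto $D(\Hh)$ near $(x_0,0)$, giving the required chart. Patching these local Nash diffeomorphisms with the charts over $\{t\ne0\}$ shows $D(\Hh)$ is a $d$-dimensional Nash submanifold of $\R^{n+1}$.

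Finally, I would identify $D(\Hh)\cap\{t=0\}=\{(x,0):\ h(x)=0,\ x\in\Hh\}=\partial\Hh\times\{0\}$, which is the zero set of the Nash function $(x,t)\mapsto t$ restricted to $D(\Hh)$, hence a Nash subset of $D(\Hh)$. The main obstacle, as noted, is the point $t=0$: one must check that the square-root singularity of the two sheets $t=\pm\sqrt{h(x)}$ is resolved by using $t$ itself as a coordinate transverse to $\partial\Hh$, and this is exactly what the transversality hypothesis $d_xh\ne 0$ on $\partial\Hh$ guarantees; without it, $D(\Hh)$ would generally fail to be smooth along $\partial\Hh\times\{0\}$. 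Everything else is a routine check that the pieces are semialgebraic and glue to a Nash structure.
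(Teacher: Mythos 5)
Your proof is correct and is essentially the standard argument: the paper itself does not reprove this lemma but simply cites \cite[\S4.B.1]{fe3}, where the same verification is carried out — implicit function theorem (or the graph description $t=\pm\sqrt{h}$) away from $t=0$, and near $\partial\Hh\times\{0\}$ the choice of Nash coordinates with $h=u_1$ so that $D(\Hh)$ becomes the graph $u_1=t^2$ parametrized smoothly by $(t,u_2,\ldots,u_d)$. Your observation that the constraint $x\in\Hh$ is automatic in these coordinates (since $u_1=t^2\geq0$) and that the transversality hypothesis on $d_xh$ is exactly what resolves the square-root singularity is precisely the content of the construction.
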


\begin{figure}[!ht]
\begin{center}
\begin{tikzpicture}[scale=0.75]
%\draw[style=help lines,step=0.5cm] (0,0) grid (20,8);

\draw[line width=1pt,rotate=-90,dashed] (-3.5,0) parabola bend (-5.5,4) (-7.5,0);
\draw[rotate=-90,draw=none,fill=gray!100] (-4.5,6) parabola bend (-4.5,6) (-6.5,2)--
(-7.5,0) parabola bend (-5.5,4) (-6.5,3)--(-5.5,5);
\draw[rotate=-90,draw=none,fill=gray!20] (-2.5,2) parabola bend (-4.5,6) (-6,3.75) parabola bend (-5.5,4) (-3.5,0);

\draw[line width=0.5pt,rotate=-90,dashed] (-2.5,2) parabola bend (-4.5,6) (-6.5,2);
\draw[line width=0.5pt,dashed](2,2.5)--(0,3.5);
\draw[line width=1.5pt,draw](6,4.5)--(4,5.5);
\draw[line width=0.5pt,dashed](2,6.5)--(0,7.5);
\draw[line width=0.5pt,dashed](5,5.5)--(3,6.5);

\draw[draw=none,fill=gray!20] (2,0.5) -- (6,0.5) -- (4,1.5) -- (0,1.5) -- (2,0.5);
\draw[line width=0.5pt,dashed](2,0.5)--(6,0.5);
\draw[line width=1.5pt,draw](6,0.5)--(4,1.5);
\draw[line width=0.5pt,dashed](4,1.5)--(0,1.5);
\draw[line width=0.5pt,dashed](0,1.5)--(2,0.5);

\draw[<->, line width=0.5pt] (1.5,5) -- (6.5,5);
\draw[<->, line width=0.5pt] (3.5,5.75) -- (6.5,4.25);
\draw[<->, line width=0.5pt] (5,3) -- (5,7);

\draw[->, line width=1pt] (5.5,3) -- (5.5,1.5);

\draw (2,5.25) node{\small$x_1$};
\draw (6.85,4.25) node{\small$x_d$};
\draw (5.25,6.75) node{\small$t$};
\draw (0.6,6.85) node{\small$\Hh_+$};
\draw (0.6,5.5) node{\small$D(\Hh)$};
\draw (0.6,4) node{\small$\Hh_-$};
\draw (6.1,5.25) node{\small$\partial\Hh\times\{0\}$};
\draw (2.25,1) node{\small$\Hh$};
\draw (6,1) node{\small$\partial\Hh$};
\draw (5.75,2.25) node{\small$\pi$};

\end{tikzpicture}
\end{center}
\caption{\small{Nash double of $\Hh$ and the projection $\pi$ (figure borrowed from \cite[Fig.3]{fe3}).}\label{fig11}}
\label{doppio}
\end{figure}
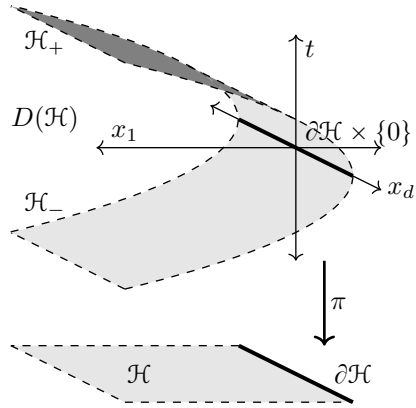

\begin{lem}[{\cite[4.B.2]{fe3}}]\label{doubleii}
The projection $\pi: D(\Hh)\to\Hh,\, (x,t)\mapsto x$ is a surjective Nash map. Fix $\epsilon=\pm$ and denote $\Hh_{\epsilon}:=D(\Hh)\cap\{\epsilon t\geq0\}$. We have the following: 
\begin{itemize}
\item[{\rm(i)}] The restriction $\pi_{\epsilon}:=\pi|_{\Hh_{\epsilon}}:\Hh_{\epsilon}\to\Hh$ is a semialgebraic homeomorphism and the restriction $\pi|_{D(\Hh)\cap\{\epsilon t>0\}}:D(\Hh)\cap\{\epsilon t>0\}\to\Int(\Hh)$ is a Nash diffeomorphism. 
\item[{\rm(ii)}] $\pi(x,0)=x$ for all $(x,0)\in\partial\Hh\times\{0\}=D(\Hh)\cap\{t=0\}$. 
\item[{\rm(iii)}] $\pi$ has local representations $(y_1,\ldots,y_d)\mapsto(y_1^2,y_2,\ldots,y_d)$ at each point of $D(\Hh)\cap\{t=0\}$.
\item[{\rm(iv)}] $\pi$ is open and proper.
\end{itemize}
\end{lem}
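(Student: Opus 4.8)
The plan is to read everything off the explicit description $D(\Hh)=\{(x,t)\in\Hh\times\R:\ t^2=h(x)\}$, using that $h\geq0$ on $\Hh$ (since $\Int(\Hh)=\{h>0\}$ and $\partial\Hh=\{h=0\}$) together with the local picture of $D(\Hh)$ already contained in Lemma \ref{double}. First I would note that $\pi$ is the restriction to $D(\Hh)$ of the linear projection $\R^{n+1}\to\R^n$, hence Nash, and that it is surjective because $x\mapsto(x,\sqrt{h(x)})$ is a continuous semialgebraic section of $\pi$ (here $\sqrt{h}$ is continuous and semialgebraic on $\Hh$ as $h\geq0$). This same formula yields (ii): indeed $(x,0)\in D(\Hh)$ if and only if $x\in\Hh$ and $h(x)=0$, i.e. $x\in\partial\Hh$, and $\pi(x,0)=x$ by definition.

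For (i) I would use the two sections $s_{\epsilon}\colon\Hh\to\Hh_{\epsilon}$, $x\mapsto(x,\epsilon\sqrt{h(x)})$. Each $s_{\epsilon}$ is continuous and semialgebraic, and $\pi_{\epsilon}\circ s_{\epsilon}=\id_{\Hh}$ while $s_{\epsilon}\circ\pi_{\epsilon}=\id_{\Hh_{\epsilon}}$ (the latter because on $\Hh_{\epsilon}$ the sign of $t$ is fixed and $t^2=h(x)$), so $\pi_{\epsilon}$ is a semialgebraic homeomorphism. Over $\{h>0\}$, where $\sqrt{\,\cdot\,}$ is a Nash function, the section $s_{\epsilon}$ is Nash, hence $\pi|_{D(\Hh)\cap\{\epsilon t>0\}}$ is a Nash diffeomorphism onto $\Int(\Hh)$.

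The crux is (iii), which is essentially the local computation underlying Lemma \ref{double}. Fix $x_0\in\partial\Hh$. Since $d_{x_0}h\colon T_{x_0}\Hh\to\R$ is surjective and $\partial\Hh=\{h=0\}$, $\Int(\Hh)=\{h>0\}$, one completes $h$ to a Nash chart $\varphi=(h,u_2,\dots,u_d)$ of $\Hh$ near $x_0$, i.e. a Nash diffeomorphism from a neighborhood of $x_0$ in $\Hh$ onto a neighborhood of $0$ in the half-space $\{u_1\geq0\}$. In these coordinates $D(\Hh)$ near $(x_0,0)$ is $\{(u,t):t^2=u_1\}$, which is the image of the Nash embedding $(t,u_2,\dots,u_d)\mapsto\big((t^2,u_2,\dots,u_d),t\big)$ of an open neighborhood of $0$ in $\R^d$; writing $(y_1,\dots,y_d):=(t,u_2,\dots,u_d)$ the map $\pi$ becomes $(y_1,\dots,y_d)\mapsto(y_1^2,y_2,\dots,y_d)$, as claimed. (That $D(\Hh)$ is a genuine Nash manifold, without boundary, along $\{t=0\}$ is precisely the content of Lemma \ref{double}.)

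Finally (iv). For properness, given a compact $K\subset\Hh$ set $R:=(\max_{K}h)^{1/2}<\infty$; then $\pi^{-1}(K)=D(\Hh)\cap(K\times[-R,R])$ is a closed subset of the compact set $K\times[-R,R]$, hence compact. For openness, $\pi$ is a diffeomorphism (so open) on $D(\Hh)\cap\{\epsilon t>0\}$, and near a point of $\{t=0\}$ it has by (iii) the local model $(y_1,\dots,y_d)\mapsto(y_1^2,y_2,\dots,y_d)$, which maps a neighborhood of $0$ onto a neighborhood of $0$ in $\{u_1\geq0\}$ because $y_1\mapsto y_1^2$ is an open map $\R\to[0,\infty)$ and a finite product of open maps is open. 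The only genuinely delicate ingredient is the Nash half-space chart with first coordinate $h$ used in (iii), together with the compatibility of the chart on $D(\Hh)$ with the smoothing across $\{t=0\}$; but both are supplied by Lemma \ref{double}, so here the argument is essentially bookkeeping.
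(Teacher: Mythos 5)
Your proof is correct. Note first that the paper itself only proves part (iv) of this lemma, citing \cite[4.B.2]{fe3} for items (i)--(iii); your write-up supplies full arguments for all four items, and your verifications of (i)--(iii) via the explicit sections $x\mapsto(x,\epsilon\sqrt{h(x)})$ and the chart $(h,u_2,\dots,u_d)$ agree with the construction being cited. For the part the paper does prove, (iv), you take a genuinely different route: the paper derives both openness and properness from the decomposition $D(\Hh)=\Hh_+\cup\Hh_-$ together with the fact that each $\pi_\epsilon$ is a semialgebraic homeomorphism, so that $\pi(A)=\pi(A_+)\cup\pi(A_-)$ is open and $\pi^{-1}(K)=\pi_+^{-1}(K)\cup\pi_-^{-1}(K)$ is a union of two compacta; you instead prove properness by the direct bound $\pi^{-1}(K)=D(\Hh)\cap(K\times[-R,R])$ with $R=(\max_K h)^{1/2}$ (valid since $D(\Hh)$ is closed in $\Hh\times\R$ and $h\geq0$ on $\Hh$), and openness from the local normal form of (iii) combined with the diffeomorphism off $\{t=0\}$. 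Both arguments are sound; the paper's is shorter because it recycles (i), while yours is more self-contained and makes the behaviour at the fold locus explicit, namely that $(y_1,\dots,y_d)\mapsto(y_1^2,y_2,\dots,y_d)$ is open onto the half-space.
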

\begin{proof}
Properties (i), (ii) and (iii) are proved in \cite[4.B.2]{fe3}. Let us show (iv). We prove first that $\pi$ is open. Let $A\subset D(\Hh)$ be an open set and let $A_{\epsilon}:=A\cap \Hh_{\epsilon}$, which is an open subset of $\Hh_{\epsilon}$. As $\pi_{\epsilon}:\Hh_{\epsilon}\to\Hh$ is a semialgebraic homeomorphism, $\pi(A_{\epsilon})$ is an open subset of $\Hh$. Thus, $\pi(A)=\pi(A_+)\cup\pi(A_-)$ is an open subset of $\Hh$, so $\pi$ is open.

To show that $\pi$ is proper, pick $K\subset\Hh$ compact and observe that $\pi^{-1}(K)=(\pi_+)^{-1}(K)\cup(\pi_-)^{-1}(K)$. As each $\pi_{\epsilon}:\Hh_{\epsilon}\to\Hh$ is a semialgebraic homeomorphism, $(\pi_{\epsilon})^{-1}(K)$ is compact for $\epsilon=\pm$, so $\pi^{-1}(K)$ is a compact subset of $D(\Hh)$, so $\pi$ is proper, as required. 
\end{proof}

\subsubsection{Compatible Nash retractions.}

Let $M\subset\R^n$ be a connected $d$-dimensional Nash manifold and $Y\subset M$ a Nash normal-crossings divisor. If $Y_1,\ldots,Y_r$ are the Nash irreducible components of $Y$, we define $\Sing(Y):=\bigcup_{i\neq j}(Y_i\cap Y_j)$. Construct inductively 
\begin{align*}
&\Sing_1(Y):=\Sing(Y),\\
&\Sing_\ell(Y):=\Sing_{\ell-1}(\Sing(Y))\quad\text{for $\ell\geq 2$}.
\end{align*}
In order to lighten the exposition, we write $\Sing_0(Y):=Y$. As $Y\subset M$ is a Nash normal-crossings divisor, $\Sing_\ell(Y)$ is either the empty set or a transversal union of finitely many Nash manifolds for each $\ell\geq0$ (see \cite[Lem.5.1]{bfr}). Thus, the Nash irreducible components of $\Sing_\ell(Y)$ are Nash manifolds for each $\ell\geq0$ such that $\Sing_\ell(Y)\neq\varnothing$. In fact, if $Y_{\ell,1},\ldots,Y_{\ell, s_\ell}$ are the irreducible components of $\Sing_\ell(Y)$, then $\Sing_{\ell+1}(Y)=\bigcup_{i\neq j}(Y_{\ell,i}\cap Y_{\ell, j})$. If $\Sing_\ell(Y)\neq\varnothing$, then $\dim(\Sing_\ell(Y))=d-\ell-1$. 

For each $\ell\geq 1$ we have $\Sing_{\ell+1}(Y)\subset\Sing_\ell(Y)$ and $\dim(\Sing_{\ell+1}(Y))\leq\dim(\Sing_\ell(Y))-1$, so there exists an $r\geq0$ such that $\Sing_k(Y)\neq\varnothing$ for $0\leq k\leq r$ and $\Sing_k(Y)=\varnothing$ if $k\geq r+1$. 

\begin{defn}
Let $Z$ be a Nash irreducible component of $\Sing_k(Y)$ for some $0\leq k\leq r$. A Nash retraction $\rho:W\to Z$, where $W\subset M$ is an open semialgebraic neighborhood of $Z$, is {\em compatible with} $Y$ if $\rho(Y_i\cap W)=Y_i\cap Z$ for each irreducible component $Y_i$ of $Y$ such that $Y_i\cap Z\neq\varnothing$. 
\end{defn}

In \cite[Prop.4.1]{fgh} the following result is proved, which is a powerful tool to make constructions compatible with an assigned Nash normal-crossings divisor.

\begin{prop}[Compatible Nash retractions, {\cite[Prop. 4.1]{fgh}}]\label{compretraction}
There exist an open semialgebraic neighborhood $W\subset M$ of $Z$ and a Nash retraction $\rho:W\to Z$ that is compatible with $Y$. In addition, $\rho(X\cap W)=X\cap Z$ for each irreducible component $X$ of $\Sing_\ell(Y)$ such that $X\cap Z\neq\varnothing$ and $\ell\geq0$. 
\end{prop}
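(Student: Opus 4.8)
The plan is to build $\rho$ first near each point of $Z$, where the normal-crossings structure of $Y$ makes the construction transparent, and then to glue these local retractions into a global Nash one; this gluing is the only delicate point, since there are no Nash partitions of unity.

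First I would record the relevant local picture. The set $I_Z:=\{i:Y_i\supseteq Z\}$ has exactly $k+1$ elements, say $i_0,\dots,i_k$: a generic $z\in Z$ is not in $\Sing_{k+1}(Y)$, so near $z$ one has $Z=Y_{i_0}\cap\cdots\cap Y_{i_k}$, and $Z$ is in fact a connected component of $Y_{i_0}\cap\cdots\cap Y_{i_k}$; every other $Y_j$ meeting $Z$ does so transversally and cuts out on $Z$ a Nash normal-crossings divisor of codimension $1$. Fix Nash equations $y_i\colon M\to\R$ with $Y_i=\{y_i=0\}$ and $d_xy_i\neq 0$ on $Y_i$, available from the preliminary material on equations of the components of a normal-crossings divisor. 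Around any $p\in Z$ there is a Nash chart taking each $Y_i$ to a coordinate hyperplane and $Z$ to the coordinate subspace indexed by $I_Z$, and the pull-back of the linear projection onto that subspace is a local Nash retraction onto $Z$ that is compatible with $Y$, in fact with every stratum $\Sing_\ell(Y)$. In such a chart $Z$ and all the $Y_i$ are affine, so the germs at $p$ of retractions compatible with all of them form an affine---hence convex---family; this convexity is what will later permit patching at the continuous semialgebraic level and Nash approximation while staying inside the relevant affine constraint sets.

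To globalize I would induct on $d=\dim M$, and the key case is $k=0$, i.e.\ $Z=Y_{i_0}$ a Nash hypersurface: one needs a Nash retraction onto $Y_{i_0}$ of a neighborhood of $Y_{i_0}$ in $M$ preserving each transverse $Y_j$. Starting from the orthogonal Nash tubular-neighborhood projection $\mathrm{pr}\colon W_0\to Y_{i_0}$ (so that $(\mathrm{pr},y_{i_0})$ is a Nash diffeomorphism of $W_0$ onto a neighborhood of $Y_{i_0}\times\{0\}$ in $Y_{i_0}\times\R$), I would correct it by subtracting from $\mathrm{pr}(x)$ a term proportional to $y_{i_0}(x)$ built from a Nash tangent field $\xi$ on $Y_{i_0}$ that solves, near each $Y_j\cap Y_{i_0}$, the locally defined affine linear conditions forcing $y_j$ to vanish to first order along $\{y_j=0\}$ (simultaneously solvable by normal crossings), then re-projecting onto $Y_{i_0}$ and iterating the correction until $Y_j\cap W_0$ maps into $Y_j\cap Y_{i_0}$ exactly. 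For $k\geq 1$ the inductive hypothesis applied to $Y_{i_0}$, its normal-crossings divisor $\bigcup_{j\neq i_0}(Y_j\cap Y_{i_0})$ and the component $Z$ of the appropriate stratum gives a compatible Nash retraction onto $Z$ inside $Y_{i_0}$, and composing it with the $k=0$ retraction of a neighborhood of $Z$ in $M$ onto $Y_{i_0}$ yields $\rho$. Requiring all corrections to respect the whole flag $Y=\Sing_0(Y)\supseteq\Sing_1(Y)\supseteq\cdots$ simultaneously is harmless, since the extra constraints are again affine and jointly solvable, and this gives the final ``in addition'' statement; the base of the induction ($\dim M$ small, or $\Sing(Y)=\varnothing$, where compatibility is vacuous) reduces to the Nash tubular-neighborhood theorem.

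The main obstacle is precisely this globalization. ``Being a retraction compatible with $Y$'' is not an open condition---a small perturbation of a compatible retraction sends $Y_j$ into a neighborhood of $Y_j\cap Z$ rather than into $Y_j\cap Z$---so one cannot simply take a continuous semialgebraic compatible retraction and Nash-approximate it; the map has to be assembled from genuinely Nash pieces, which is what forces the explicit algebraic correction of the orthogonal projection in the codimension-one step. Carrying out that correction over all of $Y_{i_0}$, checking that the first-order correction can be iterated to an exact one, and verifying that the outcome is a bona fide Nash retraction, is where the real work lies.
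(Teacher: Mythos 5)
First, a point of reference: the paper does not prove Proposition \ref{compretraction} at all --- it is quoted verbatim from \cite[Prop.4.1]{fgh} --- so your argument has to stand on its own rather than be measured against an internal proof. Its architecture is sensible and close in spirit to how such statements are established: the local analysis is correct (exactly $k+1$ components of $Y$ contain $Z$, $Z$ is a connected component of their intersection, local normal-crossings charts make the compatibility constraints affine), and the reduction by induction on $\dim M$ --- composing a compatible retraction of a neighborhood of $Z$ in $M$ onto $Y_{i_0}$ with a compatible retraction inside $Y_{i_0}$ onto $Z$ --- does go through, since each stratum $X$ of $\Sing_\ell(Y)$ meeting $Y_{i_0}$ induces a stratum $X\cap Y_{i_0}$ of the divisor $\bigcup_{j\neq i_0}(Y_j\cap Y_{i_0})$.

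The genuine gap is the codimension-one case, which you rightly call ``where the real work lies'' but do not actually carry out, and two distinct things are missing there. (1) The instruction to ``iterate the correction until $Y_j\cap W_0$ maps into $Y_j\cap Y_{i_0}$ exactly'' is unjustified: a first-order correction followed by re-projection onto $Y_{i_0}$ improves the order of tangency but there is no reason finitely many iterations achieve exact compatibility, and an infinite iteration leaves the Nash category. (2) The global Nash tangent field $\xi$ on $Y_{i_0}$ solving the local affine conditions near each $Y_j\cap Y_{i_0}$ is asserted, not constructed; gluing its local solutions is precisely the partition-of-unity obstruction you raise at the outset, and your own (correct) observation that compatibility is not an open condition blocks the naive ``glue with an ${\mathcal S}^\mu$ partition of unity, then Nash-approximate'' route. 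The missing ingredient is \emph{constrained} approximation: since, by your convexity remark, the compatibility requirements form a finite affine-linear system over the ring of Nash functions, one can produce an ${\mathcal S}^\mu$ solution with partitions of unity and then replace it by a Nash solution of the \emph{same} linear system, using approximation results that preserve Nash subsets and solve linear equations with Nash coefficients in the style of \cite[Thm.1.6, Thm.1.7, Prop.8.2]{bfr}. Without an ingredient of this kind the globalization does not close, so what you have is a credible plan rather than a proof.
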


\subsubsection{Compatible Nash collars for Nash manifolds with boundary.}

Nash collars for a Nash manifold $\Hh\subset\R^n$ with boundary $\partial\Hh$ have been constructed in \cite[Lem.VI.1.6]{sh} (compact case) and in \cite[Lem.4.2]{fe3} (general case). We adapt these constructions to build Nash collars compatible with a Nash normal-crossings divisor $Y$ that contains $\partial\Hh$.

\begin{lem}[Compatible Nash collars]\label{style}
Let $M\subset\R^n$ be a Nash manifold and $Y\subset M$ a Nash normal-crossings divisor. Let $Y_1$ be a Nash irreducible component of $Y$, $W\subset M$ an open semialgebraic neighborhood of $Y_1$ and $\rho:W\to Y_1$ a Nash retraction compatible with $Y_1$ such that $\rho(X\cap W)=X\cap Y_1$ for each irreducible component $X$ of $\Sing_\ell(Y)$ such that $X\cap Y_1\neq\varnothing$ and $\ell\geq0$. Let $h$ be a Nash function on $W$ such that $\{h=0\}=Y_1$ and $d_xh:T_xM\to\R$ is surjective for all $x\in Y_1$. Then there exist an open semialgebraic neighborhood $V\subset W$ of $Y_1$ and a strictly positive Nash function $\veps:Y_1\to\R$ such that: 
\begin{itemize}
\item[(i)] The Nash map $\varphi:=(\rho,\frac{h}{\veps\circ\rho}):V\to Y_1\times(-1,1)$ is a Nash diffeomorphism. 
\item[(ii)] $\varphi(Z\cap V)=(Z\cap Y_1)\times(-1,1)$ for each irreducible component $Z$ of $\Sing_\ell(Y)$ such that $Z\not\subset Y_1$ and $Z\cap Y_1\neq\varnothing$ and $\ell\geq0$.
\end{itemize}
\end{lem}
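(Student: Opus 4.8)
The plan is to build the collar on the product model provided by the compatible retraction, using the standard idea that a function $h$ with surjective differential along its zero set can be straightened out to a coordinate near $Y_1$, and then rescale in the normal direction so that the image is exactly $Y_1\times(-1,1)$. First I would apply the local submersion theorem combined with the retraction $\rho$: the Nash map $\Phi:=(\rho,h):W'\to Y_1\times\R$, defined on a possibly smaller open semialgebraic neighborhood $W'\subset W$ of $Y_1$, is a submersion along $Y_1$, hence (by a Nash version of the tubular/inverse function theorem, which is available in the Nash category by the semialgebraic implicit function theorem, e.g. \cite[Cor.8.9.5, Prop.8.9.6]{bcr}) restricts to a Nash diffeomorphism from some open semialgebraic neighborhood $V'\subset W'$ of $Y_1$ onto an open semialgebraic neighborhood $\Omega$ of $Y_1\times\{0\}$ in $Y_1\times\R$. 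By construction $\Phi(Y_1\cap V')=Y_1\times\{0\}$ and $\Phi^{-1}(Y_1\times\{0\})=Y_1\cap V'$.

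The second step is to shrink $\Omega$ to a band $\{(y,s):|s|<\veps(y)\}$ for a strictly positive Nash function $\veps:Y_1\to\R$. This is a routine use of the fact that $\Omega\subset Y_1\times\R$ is an open semialgebraic neighborhood of $Y_1\times\{0\}$: the function $y\mapsto\sup\{t>0:\{y\}\times(-t,t)\subset\Omega\}$ is semialgebraic and strictly positive, and by \cite{ef} or Shiota's approximation \cite{sh} can be minorized by a strictly positive Nash function $\veps$; replacing $V'$ by $\Phi^{-1}(\{|s|<\veps(y)\})$ and composing with the fiberwise rescaling $(y,s)\mapsto(y,s/\veps(y))$ (which is a Nash diffeomorphism of $\{|s|<\veps(y)\}$ onto $Y_1\times(-1,1)$) yields the map $\varphi=(\rho,\frac{h}{\veps\circ\rho})$ and gives (i), after noting $\rho\circ\Phi^{-1}(y,s)=y$ so the first component of $\varphi$ really is $\rho$.

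The heart of the matter, and the step I expect to be the main obstacle, is (ii): ensuring the collar is compatible with every singular stratum $Z$ of $Y$ meeting $Y_1$ transversally. Here I would exploit the compatibility of $\rho$ hypothesized in the statement, namely $\rho(X\cap W)=X\cap Y_1$ for every irreducible component $X$ of every $\Sing_\ell(Y)$ with $X\cap Y_1\neq\varnothing$. For such a stratum $Z\not\subset Y_1$ with $Z\cap Y_1\neq\varnothing$, the set $Z\cap Y_1$ is (a union of components of) $\Sing_{\ell+1}(Y)$ and $Z$ is a Nash submanifold transverse to $Y_1$; the point is that $Z\cap V'$ is saturated for the fibers of $\Phi$, i.e. $\Phi(Z\cap V')=(Z\cap Y_1)\times I$ for the appropriate interval. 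To see this one checks that $Z$ is a union of fibers of the normal coordinate: locally at a point of $Z\cap Y_1$ one can choose Nash coordinates $(u,v,s)$ with $Y_1=\{s=0\}$, $h=s$, $Z=\{v=0\}$ (using that $Z$ is a Nash normal-crossings component transverse to $Y_1$, so it is cut out by coordinates independent of $s$ once we have straightened $Y_1$), and the compatibility $\rho(Z\cap W)=Z\cap Y_1$ forces the retraction to preserve the $v=0$ locus, so in $(\rho,h)$-coordinates $Z$ is precisely $(Z\cap Y_1)\times(-\veps,\veps)$. Rescaling then gives $\varphi(Z\cap V)=(Z\cap Y_1)\times(-1,1)$. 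The delicate part is verifying that $\Phi^{-1}$ carries $(Z\cap Y_1)\times(-\veps,\veps)$ back onto $Z\cap V'$ and not merely into it; this follows because both sides are $d_Z$-dimensional Nash submanifolds (where $d_Z=\dim Z$), the inclusion is an open embedding fiberwise by the transversality, and $\Phi$ restricted to $Z\cap V'$ is still a submersion onto $(Z\cap Y_1)\times\R$ with the same straightened form. Finally I would set $V:=\Phi^{-1}(\{(y,s):|s|<\veps(y)\})$ as the desired open semialgebraic neighborhood, and observe that all maps and sets involved are semialgebraic and smooth, hence Nash, so everything stays in the Nash category.
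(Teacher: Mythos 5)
Your overall route coincides with the paper's: straighten the normal direction with $\Phi=(\rho,h)$ via the Nash inverse function theorem, shrink the image to a Nash band $\{|s|<\veps(y)\}$ and rescale fiberwise to get $\varphi=(\rho,\tfrac{h}{\veps\circ\rho})$, and then use the compatibility of $\rho$ to get $\varphi(Z\cap V)\subset(Z\cap Y_1)\times(-1,1)$. Steps (i) and the construction of $\veps$ are fine (the paper takes $\delta(y)=\dist((y,0),(Y_1\times\R)\setminus U)$ and a Nash minorant, which is your width function in disguise).

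The gap is exactly where you suspected: the surjectivity in (ii). Your justification --- ``both sides are $d_Z$-dimensional Nash submanifolds, the inclusion is an open embedding fiberwise, and $\Phi|_{Z\cap V'}$ is a submersion \emph{onto} $(Z\cap Y_1)\times\R$'' --- is circular, since ``onto'' is the claim to be proved, and the local coordinate picture $(u,v,s)$ only controls $Z$ near $Y_1\times\{0\}$, not along the whole fiber $\{y\}\times(-\veps(y),\veps(y))$; an open subset of the same dimension need not be everything. The two missing ingredients are: (a) $Z\cap V$ is \emph{closed} in $V$ (it is an irreducible component of the Nash set $\Sing_\ell(Y)$), so $\varphi(Z\cap V)$ is a closed, $e$-dimensional Nash subset of $Y_1\times(-1,1)$ contained in $(Z\cap Y_1)\times(-1,1)$; and (b) each connected component $C_i$ of $Z\cap Y_1$ gives an \emph{irreducible} $e$-dimensional Nash subset $C_i\times(-1,1)$, so the identity principle (equivalently, open-and-closed in a connected set that $\varphi(Z\cap V)$ meets along $C_i\times\{0\}$) forces $\varphi(Z_i)=C_i\times(-1,1)$ for the component $Z_i$ of $Z\cap V$ over $C_i$. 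One also has to check $\rho(Z_i)=C_i$ (connectedness of $Z_i$ plus $C_i\subset Z_i\cap Y_1\subset\rho(Z_i)$), which is what localizes the argument to one component at a time. With these additions your argument closes; without them the claimed equality $\varphi(Z\cap V)=(Z\cap Y_1)\times(-1,1)$ is not established.
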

\begin{proof}
Define $\phi:=(\rho,h):W\to Y_1\times\R$. We show first: {\em The derivative $d_x\phi=(d_x\rho,d_xh):T_xM\to T_xY_1\times\R$ is an isomorphism for all $x\in Y_1$}. As $\dim(T_xM)=\dim(T_xY_1\times\R)$, it is enough to prove: \em $d_x\phi$ is surjective\em. 

As $\phi|_{Y_1}=(\id_{Y_1},0)$, we have $d_x\phi|_{T_xY_1}=(\id_{T_xY_1},0)$, so $T_xY_1\times\{0\}\subset d_x\phi(T_xM)$. In addition $d_xh:T_xM\to\R$ is surjective, so there exists $v\in T_xM$ such that $d_xh(v)=1$. Thus, $d_x\phi(v)=(d_x\rho(v),1)$, so $d_x\phi$ is surjective.

Let $W':=\{x\in W:\ d_x\phi\ \text{is an isomorphism}\}$, which is an open semialgebraic neighborhood of $Y_1$. Thus, $\phi|_{W'}:W'\to Y_1\times\R$ is an open map and $\phi(W')$ is an open semialgebraic neighborhood of $Y_1\times\{0\}$ in $Y_1\times\R$. As $\phi|_{W'}:W'\to\phi(W')$ is a local homeomorphism and $\phi|_{Y_1}=(\id_{Y_1},0)$ is a homeomorphism (onto its image), there exist by \cite[Lem.9.2]{bfr} open semialgebraic neighborhoods $W''\subset W'$ of $Y_1$ and $U\subset Y_1\times\R$ of $Y_1\times\{0\}$ such that $\phi|_{W''}:W''\to U$ is a semialgebraic homeomorphism.

Consider the strictly positive, continuous semialgebraic map 
$$
\delta:Y_1\to(0,+\infty),\ x\mapsto\dist((x,0),(Y_1\times\R)\setminus U).
$$ 
By \cite[Thm.II.4.1]{sh} there exists a strictly positive Nash function $\veps$ on $Y_1$ such that $|\veps-\frac{3}{4}\delta|<\frac{1}{4}\delta$, that is, $\frac{1}{2}\delta<\veps<\delta$. Consider the open semialgebraic neighborhood 
$$
U':=\{(y,t)\in Y_1\times\R:\ |t|<\veps(y)\}\subset U
$$ 
of $Y_1\times\{0\}$ and define $V:=(\phi|_{W''})^{-1}(U')$. The restriction $\phi|_V:V\to U'$ is a Nash diffeomorphism (because it is a Nash homeomorphism such that $d_x(\phi|_V)$ is an isomorphism for each $x\in V$). Consequently, 
$$
\varphi:V\to Y_1\times(-1,1),\ x\mapsto\Big(\rho(x),\frac{h(x)}{\veps(\rho(x))}\Big)
$$
is a Nash diffeomorphism, because it is the composition of $\phi|_V$ with the Nash diffeomorphism
$$
\psi:U'\to Y_1\times(-1,1),\ (y,t)\mapsto\Big(y,\frac{t}{\veps(y)}\Big).
$$ 

Let $\ell\geq0$ and let $Z$ be a Nash irreducible component of $\Sing_\ell(Y)$ such that $Z\not\subset Y_1$ and $Z\cap Y_1\neq\varnothing$. As $Y$ is a Nash normal-crossings divisor, $Z$ is a Nash manifold \cite[Lem.5.1]{bfr}. Observe that $Z\cap V$ is a Nash manifold, say of dimension $e$. By Proposition \ref{compretraction} $\rho(Z\cap V)=Z\cap Y_1$, so $\varphi(Z\cap V)\subset(Z\cap Y_1)\times(-1,1)$. Let us prove: {\em $\varphi|_{Z\cap V}:Z\cap V\to(Z\cap Y_1)\times(-1,1)$ is surjective}.

As $Y$ is a Nash normal-crossings divisor, $Z\cap Y_1$ is a Nash manifold, so its connected components $C_i$ are Nash manifolds of dimension $e-1$ where $e=\dim(Z)$, because $Z\not\subset Y_1$. Thus, each $C_i\times(-1,1)$ is a closed connected Nash submanifold of $Y_1\times(-1,1)$ of dimension $e$ and, in particular, $C_i\times(-1,1)$ is an irreducible Nash subset of $Y_1\times(-1,1)$ of dimension $e$. 

Let $Z_i$ be a connected component of the Nash manifold $Z\cap V$ such that $C_i\subset Z_i\cap Y_1$. As $Z_i$ is connected, also $\rho(Z_i)\subset Z\cap Y_1$ is connected, so it is contained in one of the connected components of $Z\cap Y_1$. As $C_i\subset Z_i\cap Y_1\subset\rho(Z_i)$ is a connected component of $Z\cap Y_1$, we deduce $\rho(Z_i)=Z_i\cap Y_1=C_i$. Then $\varphi(Z_i)\subset C_i\times(-1,1)$ and $\varphi(Z_i)$ is a Nash subset of $Y_1\times(-1,1)$ of dimension $e$ (because $\varphi$ is a Nash diffeomorphism and $Z_i$ is a connected component of the $e$-dimensional Nash manifold $Z\cap V$, which is a closed subset of $V$). By the identity principle, we conclude $\varphi(Z_i)=C_i\times(-1,1)$ because $C_i\times(-1,1)$ is an irreducible Nash subset of $Y_1\times(-1,1)$ of dimension $e$. Consequently, 
$$
(Z\cap Y_1)\times(-1,1)=\bigcup_i(C_i\times(-1,1))=\bigcup_i\varphi(Z_i)\subset\varphi(Z\cap V)\subset(Z\cap Y_1)\times(-1,1),
$$
so $\varphi|_{Z\cap V}:Z\cap V\to(Z\cap Y_1)\times(-1,1)$ is surjective, as required.
\end{proof}

\subsection{Nash manifolds with corners.}\label{angoliPre}
We next study some properties of Nash manifolds with corners. Let $\Qq\subset\R^n$ be a Nash manifold with corners. The set of {\em internal points} of $\Qq$ is $\Int(\Qq):=\Sth(\Qq)$, that is, the set of {\em smooth points of $\Qq$}, which correspond to the points $x\in\Qq$ for which there exists an open semialgebraic neighborhood $W^x\subset\R^n$ of $x$ such that the intersection $W^x\cap\Qq$ is a Nash manifold. The {\em boundary} $\partial\Qq$ of $\Qq$ is $\partial\Qq:=\Qq\setminus\Int(\Qq)=\Qq\setminus\Sth(\Qq)$. The set $\Sth(\Qq)$ of smooth points of $\Qq$ is by \cite{st} a semialgebraic subset of $\R^n$. Thus, both $\Int(\Qq)=\Sth(\Qq)$ and $\partial \Qq=\Qq\setminus\Sth(\Qq)$ are semialgebraic subsets of $\R^n$.

A Nash manifold with corners $\Qq\subset\R^n$ is locally closed. Consequently, {\em $\Qq$ is a closed Nash submanifold with corners of the Nash manifold $\R^n\setminus(\cl(\Qq)\setminus\Qq)$.} In \cite[Thm.1.11]{fgr} it is shown that $\Qq$ is a closed subset of an affine Nash manifold of its same dimension. Recall that a Nash subset $Y$ of a Nash manifold $M\subset\R^n$ {\em has only Nash normal-crossings in $M$} if for each point $y\in Y$ there exists an open semialgebraic neighborhood $U\subset M$ such that $Y\cap U$ is a Nash normal-crossings divisor of $U$.

\begin{thm}[{\cite[Thm.1.11]{fgr}}]\label{corners0}
Let $\Qq\subset\R^n$ be a $d$-dimensional Nash manifold with corners. There exists a $d$-dimensional Nash manifold $M\subset\R^n$ that contains $\Qq$ as a closed subset and satisfies:
\begin{itemize}
\item[\rm{(i)}] The Nash closure $Y$ of $\partial\Qq$ in $M$ has only Nash normal-crossings in $M$ and $\Qq\cap Y=\partial\Qq$.
\item[\rm{(ii)}] For every $x\in\partial\Qq$ the smallest analytic germ that contains the germ $\partial\Qq_x$ is $Y_x$.
\item[\rm{(iii)}] $M$ can be covered by finitely many open semialgebraic subsets $U_i$ (for $i=1,\ldots,r$) equipped with Nash diffeomorphisms $u_i:=(u_{i1},\dots,u_{id}):U_i\to\R^d$ such that:
$$\hspace{-3mm}
\begin{cases}
\text{$U_i\subset\Int(\Qq)$ or $U_i\cap\Qq=\varnothing$},&\text{\!if $U_i$ does not meet $\partial\Qq$,}\\ 
U_i\cap\Qq=\{u_{i1}\ge0,\dots,u_{ik_i}\ge0\},&\text{\!if $U_i$ meets $\partial\Qq$ (for a suitable $k_i\ge1$).} 
\end{cases}
$$
\end{itemize}
\end{thm}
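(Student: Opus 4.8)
The plan is to build $M$ by unfolding the corner structure of $\Qq$ in finitely many local charts and gluing the unfoldings into a single embedded Nash manifold; properties (i)--(iii) then become purely local. As recalled just before the statement, $\Qq$ is locally closed, so I may and will work inside the Nash manifold $N:=\R^n\setminus(\cl(\Qq)\setminus\Qq)$, in which $\Qq$ is closed. By definition every $x\in\Qq$ has an open semialgebraic neighbourhood $W^x\subset N$ and a Nash diffeomorphism $\Phi^x\colon W^x\to\Omega^x$ onto an open semialgebraic set $\Omega^x\subset\R^n$ carrying $W^x\cap\Qq$ onto $\Omega^x\cap(C_k\times\{0\}^{n-d})$, where $C_k:=\{y_1\ge0,\dots,y_k\ge0\}\subset\R^d$ and $k=k(x)\ge0$. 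By a standard finiteness argument for semialgebraic sets I fix finitely many such charts $\Phi_i\colon W_i\to\Omega_i$ ($i=1,\dots,r$), arranged (after shrinking) so that $\bigcup_iW_i\supset\Qq$, each $\Omega_i\cap(\R^d\times\{0\}^{n-d})$ is an open box, and there is a second finite cover $\{W_i'\}$ of $\Qq$ with $\cl(W_i')\cap\Qq\subset W_i$. Writing $E:=\R^d\times\{0\}^{n-d}$, the unfolded model in chart $i$ is the $d$-dimensional Nash submanifold $M_i:=\Phi_i^{-1}(\Omega_i\cap E)$ of $W_i$ together with the Nash normal-crossings divisor $Y_i:=\Phi_i^{-1}(\Omega_i\cap(\{y_1\cdots y_{k_i}=0\}\times\{0\}^{n-d}))$; its irreducible components $\Phi_i^{-1}(\{y_j=0\}\cap\cdots)$ are transversal because the hyperplanes $\{y_j=0\}$ are, $W_i\cap\Qq$ is closed in $M_i$, and $Y_i\cap\Qq=W_i\cap\partial\Qq$. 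After shrinking the box to $(-a,a)^d$ and postcomposing with the coordinatewise Nash diffeomorphism $t\mapsto t/\sqrt{a^2-t^2}$ (which preserves every hyperplane $\{y_j=0\}$ and every sign), the first $d$ coordinates of $\Phi_i$ give a Nash diffeomorphism $u_i\colon M_i\to\R^d$ with $M_i\cap\Qq=\{u_{i1}\ge0,\dots,u_{ik_i}\ge0\}$ and $Y_i=\{u_{i1}\cdots u_{ik_i}=0\}$.

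Next I would prove that the local models agree near $\Qq$. Fix $i\neq j$ and $p\in M_i\cap M_j$ lying over a point of $W_i\cap W_j\cap\Qq$. The germs $(M_i)_p$ and $(M_j)_p$ are $d$-dimensional smooth, hence irreducible, Nash germs, and both contain the germ $(\Qq\cap W_i\cap W_j)_p$; since $\Int(\Qq)$ is dense in $\Qq$, this germ has nonempty interior in a $d$-dimensional manifold, so the smallest analytic germ containing it is irreducible of dimension $d$. A $d$-dimensional irreducible analytic germ containing another equals it, so $(M_i)_p=(M_j)_p$; and $(Y_i)_p=(Y_j)_p$ as well, because the transition Nash diffeomorphism $\Phi_j\circ\Phi_i^{-1}$ carries the germ of the corner at $\Phi_i(p)$ to the germ of the corner at $\Phi_j(p)$ and so matches up the corresponding coordinate hyperplanes. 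Consequently the incompatibility loci $B_{ij}:=\{p\in M_i\cap M_j\cap W_i'\cap W_j'\colon M_i \text{ and } M_j \text{ do not coincide near } p\}$ are semialgebraic, and since $\cl(W_i')\cap\Qq\subset W_i$ and $M_i,M_j$ coincide near every point of $W_i\cap W_j\cap\Qq$, the closures $\cl(B_{ij})$ in $\R^n$ are disjoint from $\Qq$.

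The gluing is the one genuinely delicate step, because there are no Nash partitions of unity at our disposal. As $\Qq$ is closed in $N$ and there are finitely many pairs $(i,j)$, I would choose an open semialgebraic neighbourhood $\Omega\subset N$ of $\Qq$ disjoint from every $\cl(B_{ij})$ and, shrinking it further, thin enough that $\{M_i\cap W_i'\cap\Omega\}_{i=1}^r$ is a \emph{good} finite cover of $\Qq$: it still covers $\Qq$, each piece is relatively closed in $\Omega$, and, by the previous paragraph, any two pieces coincide wherever they meet. Then $M:=\bigcup_{i=1}^r(M_i\cap W_i'\cap\Omega)$ is, near each of its points, equal to one of the Nash manifolds $M_i$, hence is a $d$-dimensional Nash manifold; it contains $\Qq$, and $\Qq$ is closed in $M$ because it is closed in $\Omega$. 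The uniform control needed to choose such a tube and cover can be supplied, if desired, by the compatible Nash retractions and collars of Proposition \ref{compretraction} and Lemma \ref{style} applied to the divisors $Y_i$; this bookkeeping, not any new idea, is the true technical cost of the theorem.

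It remains to read off (i)--(iii). For (iii), the sets $U_i:=M_i\cap W_i'\cap\Omega$ cover $M$ and the $u_i$ above restrict to Nash diffeomorphisms onto open boxes, hence (after a final coordinatewise rescaling) onto $\R^d$; if $U_i$ meets $\partial\Qq$ then $U_i\cap\Qq=\{u_{i1}\ge0,\dots,u_{ik_i}\ge0\}$ with $k_i\ge1$, and if it does not, I shrink $U_i$ either into $\Int(\Qq)$ or off the closed set $\Qq$. For (ii), at $x\in\partial\Qq$ and in a chart in which $\partial\Qq$ is the germ $\bigcup_{j=1}^k(\{y_j=0\}\cap C_k)_x$, each face $\{y_j=0\}\cap C_k$ is $(d-1)$-dimensional with nonempty interior in the smooth irreducible hypersurface germ $\{y_j=0\}_x$, so every analytic germ at $x$ containing $\partial\Qq_x$ contains $\{y_j=0\}_x$; hence the smallest such germ equals $\bigcup_{j=1}^k\{y_j=0\}_x=\{y_1\cdots y_k=0\}_x$, which is a Nash germ and therefore coincides with $Y_x$, the germ of the Nash closure of $\partial\Qq$ in $M$. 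Then (i) follows: the $Y_i\cap\Omega$ agree on overlaps by the coherence proved above and glue to a Nash subset $Y$ of $M$ (locally the zero set of $u_{i1}\cdots u_{ik_i}$), which is the Nash closure of $\partial\Qq$ in $M$ by (ii), has only Nash normal-crossings in $M$ since each $Y_i$ does, and satisfies $\Qq\cap Y=\bigcup_i(W_i\cap\partial\Qq\cap\Omega)=\partial\Qq$. As flagged, the main obstacle is the semialgebraic gluing of the unfoldings into a single embedded Nash manifold; the local normal form and the germ-theoretic verifications are routine.
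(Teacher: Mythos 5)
This statement is not proved in the paper at hand: it is quoted verbatim from \cite[Thm.1.11]{fgr} and used as a black box, so there is no ``paper's own proof'' to compare against. I will therefore assess your argument on its own merits.

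The central difficulty is that your proof starts from a false premise. You write that ``by definition every $x\in\Qq$ has an open semialgebraic neighbourhood $W^x\subset N$ and a \emph{Nash} diffeomorphism $\Phi^x\colon W^x\to\Omega^x$\ldots carrying $W^x\cap\Qq$ onto $\Omega^x\cap(C_k\times\{0\}^{n-d})$.'' But a Nash manifold with corners is defined in the paper as a semialgebraic set that is a \emph{smooth} submanifold with corners of $\R^n$; the definition only furnishes $\Cont^\infty$ local charts, not Nash ones. For a Nash manifold \emph{without} boundary, the passage from smooth to Nash charts is routine: locally $M$ is the graph of a $\Cont^\infty$ semialgebraic function on an \emph{open} domain, and $\Cont^\infty$ semialgebraic on an open set already means Nash, so the Nash inverse function theorem supplies Nash charts. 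For a manifold with corners the same strategy runs into a real obstacle: near $x\in\partial\Qq$ the manifold is the graph of a $\Cont^\infty$ semialgebraic map $g$ defined on a corner $C_{k}\times\R^{d-k}$, and there is no a priori reason why the $\Cont^\infty$ extension of $g$ across the boundary (whose existence is what ``smooth'' means on a closed corner) can be chosen semialgebraic; producing such a Nash extension, and thereby a Nash chart straightening the corner, is essentially the nontrivial content that \cite{fgr} supplies (compare already \cite[Prop.1.2]{fgr}, which is itself a theorem and not a consequence of the definition). By invoking these charts ``by definition'' you are assuming the heart of the statement. Once such Nash corner charts are granted, the remainder of your argument — irreducibility of $d$-dimensional analytic germs to make the local unfoldings $M_i$ cohere over $\Qq$, shrinking to a tube to discard the incompatibility loci $B_{ij}$, reading off (i)--(iii) from the local models — is plausible in outline, though you explicitly leave the semialgebraic bookkeeping of the gluing as a promissory note. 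The gap to repair is therefore not the gluing but the first displayed claim: you need a separate, genuinely semialgebraic argument (not available from the definition) producing Nash straightening charts for a smooth-with-corners semialgebraic set before the rest of the construction can begin.
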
 

The Nash manifold $M$ is called a {\em Nash envelope of $\Qq$}. In general, it is not guaranteed that the Nash closure $Y$ of $\partial\Qq$ in $M$ is a Nash normal-crossings divisor of $M$ as we show next.

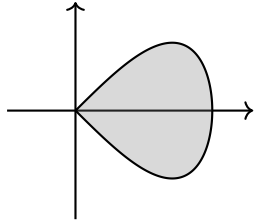
\begin{figure}[!ht]
\begin{center}
\begin{tikzpicture}[scale=1.8]
\draw[thick,->] (-0.5, 0) -- (1.3, 0) ;
\draw[thick,->] (0, -0.8) -- (0, 0.8) ;
\draw [thick,scale=1,domain=0:1, draw=black, fill=gray!100, fill opacity=0.3,smooth,samples=1000]
(0, 0)
-- plot ({\x}, {sqrt(\x*\x-\x*\x*\x*\x)})
-- (1, 0);
\draw [thick,scale=1,domain=0:1, draw=black, fill=gray!100, fill opacity=0.3,samples=1000]
(0, 0)
-- plot ({\x},{ -sqrt(\x*\x-\x*\x*\x*\x)})
-- (1, 0);
\end{tikzpicture}
\end{center}
\caption{\small{The teardrop.}}
\label{teardrop}
\end{figure}

\begin{example}
The {\em teardrop} $\Qq:=\{\x\geq0,\y^2\leq\x^2-\x^4\}\subset\R^2$ is a Nash manifold with corners (Figure \ref{teardrop}). Given any open semialgebraic neighborhood $M$ of $\Qq$ in $\R^2$ the Nash closure of $\partial\Qq$ in $M$ is not a Nash normal-crossings divisor. 
\hfill$\sqbullet$
\end{example} 

We define now Nash manifolds with divisorial corners. 

\begin{defn}
A Nash manifold with corners $\Qq\subset\R^n$ is a Nash manifold with {\em divisorial corners} if there exists a Nash envelope $M\subset\R^n$ such that the Nash closure of $\partial\Qq$ in $M$ is a Nash normal-crossings divisor.
\end{defn}

A {\em facet} of a Nash manifold with corners $\Qq\subset\R^n$ is the (topological) closure in $\Qq$ of a connected component of $\Sth(\partial\Qq)$. As $\partial\Qq=\Qq\setminus\Sth(\Qq)$ is semialgebraic, the facets are semialgebraic and finitely many. The non-empty intersections of facets of $\Qq$ are the {\em faces} of $\Qq$. In \cite{fgr} the following characterization for Nash manifolds with divisorial corners is shown:

\begin{thm}[{\cite[Thm.1.12, Cor.6.5]{fgr}}]\label{divisorialPre}
Let $\Qq\subset\R^n$ be a $d$-dimensional Nash manifold with corners. The following assertions are equivalent:
\begin{itemize}
\item[\rm{(i)}] There exists a Nash envelope $M\subset\R^n$ where the Nash closure of $\partial\Qq$ is a Nash normal-crossings divisor.
\item[\rm{(ii)}] Every facet $\Ff$ of $\Qq$ is contained in a Nash manifold $X\subset\R^n$ of dimension $d-1$.
\item[\rm{(iii)}] The number of facets of $\Qq$ that contain every given point $x\in\partial\Qq$ coincides with the number of connected components of the germ $\Sth(\partial\Qq)_x$.
\item[\rm{(iv)}] All the facets of $\Qq$ are Nash manifold with divisorial corners. 
\end{itemize}
If that is the case, the Nash manifold $M$ in {\rm(i)} can be chosen such that the Nash closure in $M$ of every facet $\Ff$ of $\Qq$ meets $\Qq$ exactly along $\Ff$.
\end{thm}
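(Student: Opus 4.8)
The plan is to run the cycle of implications $(\mathrm i)\Rightarrow(\mathrm{iv})\Rightarrow(\mathrm{ii})\Rightarrow(\mathrm{iii})\Rightarrow(\mathrm i)$ and to extract the last assertion from the construction carried out in the final step. Throughout I fix a Nash envelope $M\subset\R^n$ of $\Qq$ as in Theorem~\ref{corners0}, together with its Nash closure $Y$ of $\partial\Qq$ (so $Y$ has only Nash normal-crossings in $M$, $\Qq\cap Y=\partial\Qq$, and $Y_x$ is the smallest analytic germ containing $\partial\Qq_x$ for each $x\in\partial\Qq$) and the chart atlas $u_i:U_i\to\R^d$ with $U_i\cap\Qq=\{u_{i1}\ge0,\dots,u_{ik_i}\ge0\}$. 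The basic local bookkeeping I will use repeatedly is: for $x\in\partial\Qq$, after relabelling and shrinking a chart around $x$ one has $\Qq\cap U=\{u_1\ge0,\dots,u_m\ge0\}$ with $u_1(x)=\dots=u_m(x)=0$, so $Y_x=\bigcup_{j=1}^m\{u_j=0\}_x$ is a normal-crossings germ with exactly $m$ branches and $\Sth(\partial\Qq)_x=\bigcup_{j=1}^m\{u_j=0,\ u_l>0\text{ for }l\le m,\ l\ne j\}_x$ has exactly $m$ pairwise disjoint connected components, one inside each branch. Thus the integer $m=m(x)$ is simultaneously the number of branches of $Y_x$ and the number of connected components of the germ $\Sth(\partial\Qq)_x$.

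For $(\mathrm i)\Rightarrow(\mathrm{iv})$, assume $Y$ is a Nash normal-crossings divisor with components $Y_1,\dots,Y_r$. A facet $\Ff=\cl_\Qq(C)$, with $C$ a connected component of $\Sth(\partial\Qq)$, lies near each of its points in a single branch of $Y$, so by connectedness $C$, hence $\Ff$ (which equals $\cl(C)$ since $\Qq$ is closed in $\R^n$), is contained in one component $Y_\nu$. Then $\partial\Ff\subset\bigcup_{\mu\ne\nu}(Y_\mu\cap Y_\nu)$, and the Nash closure of $\partial\Ff$ inside the Nash manifold $Y_\nu$ is contained in that union, which is a Nash normal-crossings divisor of $Y_\nu$ because intersecting a transversal family of hypersurfaces of $M$ with the fixed hypersurface $Y_\nu$ again yields a transversal family; as $\Ff$ is closed in $Y_\nu$, this exhibits $\Ff$ as a Nash manifold with divisorial corners. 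The implication $(\mathrm{iv})\Rightarrow(\mathrm{ii})$ is immediate, since a Nash manifold with divisorial corners is by definition contained, as a closed subset, in a Nash manifold of its own dimension.

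For $(\mathrm{ii})\Rightarrow(\mathrm{iii})$, note first that the inequality ``$\#\{\text{facets through }x\}\le m(x)$'' always holds, because each of the $m(x)$ germ components of $\Sth(\partial\Qq)_x$ propagates to a connected component of $\Sth(\partial\Qq)$ and thus to a facet through $x$. If the inequality were strict at some $x$, two of these germ components --- sitting in distinct branches $\{u_j=0\}_x\ne\{u_{j'}=0\}_x$ of $Y_x$ --- would belong to the same facet $\Ff$, which by $(\mathrm{ii})$ lies in a $(d-1)$-dimensional Nash manifold $X$; since $x\in\Ff\subset X$, the smooth codimension-one germ $X_x$ would contain both $(d-1)$-dimensional germs $\{u_j=0,\,u_l>0\}_x$ and $\{u_{j'}=0,\,u_l>0\}_x$. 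But a smooth hypersurface germ is the unique minimal analytic germ through any of its $(d-1)$-dimensional subgerms (two distinct smooth hypersurface germs meet in codimension $\ge2$), so $X_x=\{u_j=0\}_x=\{u_{j'}=0\}_x$, contradicting that these are distinct branches of $Y_x$. Hence equality holds at every $x\in\partial\Qq$, which is $(\mathrm{iii})$.

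The heart of the proof is $(\mathrm{iii})\Rightarrow(\mathrm i)$, and this is the step I expect to require real work. The first point is local-to-global: one shows, using the identity principle for the irreducible Nash set cut out by each component of $Y$ together with the chart structure of Theorem~\ref{corners0}, that two branches of $Y_x$ lying in a common Nash irreducible component of $Y$ would force the corresponding germ components of $\Sth(\partial\Qq)_x$ into a common connected component of $\Sth(\partial\Qq)$, hence collapse two facets through $x$ into one; so $(\mathrm{iii})$ forces distinct branches of $Y_x$ into distinct Nash irreducible components of $Y$. Consequently each Nash irreducible component of $Y$ is a codimension-one Nash submanifold of $M$ \emph{at every point of $\partial\Qq$}, and transversality of the family along $\partial\Qq$ is free from the ``only Nash normal-crossings'' property; thus $Y$ is already a Nash normal-crossings divisor on some open semialgebraic neighbourhood of $\Qq$. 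The remaining --- and, I expect, most delicate --- step is of a global, shrinking nature: one passes to an open semialgebraic neighbourhood $M'\subset M$ of $\Qq$ that discards the singular loci of the components of $Y$ and the components of $Y$ not meeting $\partial\Qq$ in codimension one, while keeping $\Qq$ closed (automatic, since $\Qq$ is closed in $\R^n$) and controlling the Nash closure $Y'$ of $\partial\Qq$ in $M'$ so that $Y'=Y\cap M'$ stays a Nash normal-crossings divisor with $\Qq\cap Y'=\partial\Qq$; the compatible Nash retractions of Proposition~\ref{compretraction} are the natural tool to organize this shrinking component by component. Applying those same retractions to trim each Nash irreducible component of $Y'$ to an open neighbourhood of the corresponding facet yields the closing assertion that $M'$ can be chosen so that the Nash closure in $M'$ of every facet $\Ff$ meets $\Qq$ exactly along $\Ff$. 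The point I expect to require the most care is checking that the shrinking does not drop the Nash closure of $\partial\Qq$ below a normal-crossings divisor --- i.e. that every surviving component of $Y$ contains a $(d-1)$-dimensional piece of $\partial\Qq$ and therefore necessarily reappears in the Nash closure computed inside $M'$.
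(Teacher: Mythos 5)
First, a point of comparison: the paper does not prove this statement at all --- it is quoted verbatim from \cite[Thm.1.12, Cor.6.5]{fgr}, so there is no in-paper proof to measure your argument against. Judged on its own terms, your cycle $(\mathrm i)\Rightarrow(\mathrm{iv})\Rightarrow(\mathrm{ii})\Rightarrow(\mathrm{iii})\Rightarrow(\mathrm i)$ is a sensible organization, and the first three implications are essentially sound: the localization of a facet inside a single smooth component $Y_\nu$ under (i), the triviality of $(\mathrm{iv})\Rightarrow(\mathrm{ii})$, and the germ-counting argument for $(\mathrm{ii})\Rightarrow(\mathrm{iii})$ via the fact that a smooth hypersurface germ containing a $(d-1)$-dimensional subgerm of a branch must equal that branch, are all correct (modulo your unwarranted aside that $\Qq$ is closed in $\R^n$, which is harmless since only the closure in $\Qq$ is used).

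The genuine gap is in $(\mathrm{iii})\Rightarrow(\mathrm i)$, and it sits exactly where you flag uncertainty. Your pivotal claim is that two branches of $Y_x$ lying in a common Nash irreducible component of $Y$ would force the corresponding germ components of $\Sth(\partial\Qq)_x$ into a common connected component of $\Sth(\partial\Qq)$, hence into one facet. This does not follow from the identity principle: Nash irreducibility of a component of the Nash closure is a global property of that Nash set, and an irreducible Nash set can perfectly well be singular at $x$ with two branches each carrying a \emph{different} facet (connectedness of $\Sth(\partial\Qq)$ is a property of $\partial\Qq$, not of the ambient Nash closure, and the smooth locus of an irreducible Nash set need not even be connected). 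So (iii) does not, by the argument given, force the irreducible components of $Y$ to be smooth along $\partial\Qq$. What is actually needed --- and what constitutes the real content of \cite[Thm.1.12]{fgr} --- is the local-to-global statement that a facet which is locally contained at each point in a smooth analytic hypersurface germ is globally contained in an affine Nash manifold of dimension $d-1$ (i.e. one must first get to (ii)); this is a nontrivial gluing/extension problem resolved in \cite{fgr} by the finiteness machinery that is the main theme of that paper. Your appeal to Proposition \ref{compretraction} cannot fill this hole, since compatible Nash retractions \emph{presuppose} that one already has a Nash normal-crossings divisor; they are a tool for the subsequent shrinking, not for producing the smooth hypersurfaces in the first place. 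As written, $(\mathrm{iii})\Rightarrow(\mathrm i)$ is therefore a plan rather than a proof, and the one concrete reduction it offers rests on an implication that is false in the stated generality.
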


Note that properties (ii), (iii) and (iv) are intrinsic properties of $\Qq$ and do not depend on the Nash envelope $M$. The faces of a Nash manifold with divisorial corners are again Nash manifolds with divisorial corners. If a Nash envelope $M\subset\R^n$ of $\Qq$ satisfies (one of the equivalent) conditions of Theorem \ref{divisorialPre}, every open semialgebraic neighborhood $M'\subset M$ of $\Qq$ satisfies such conditions. For the rest of this article and the statements in the Introduction, we make the following assumption: 

\begin{assumption}
A Nash manifold with corners means a Nash manifold with divisorial corners.
\end{assumption}

\subsubsection{Nash equations for the facets of a Nash manifold with corners.}\label{envelope}
Let $\Qq\subset\R^n$ be a Nash manifold with corners. The following result provides suitable Nash equations for the facets of $\Qq$. The proof is strongly inspired by the proof of \cite[Lem.4.3]{fe3}, however it needs some modifications and we include it in full detail for the sake of the reader. 

\begin{lem}[Nash equations for the facets]\label{equation}
Let $M$ be a Nash envelope of the Nash manifold with corners $\Qq$ such that the Nash closure $Y$ of $\partial\Qq$ is a Nash normal-crossings divisor and $\Qq\cap Y=\partial\Qq$. Let $Y_1$ be a Nash irreducible component of $Y$. Then, after shrinking $M$ if necessary, there exists a Nash function $h_1:M\to\R$ such that: 
\begin{itemize}
\item $Y_1=\{h_1=0\}
$ and $d_xh_1:T_xM\to\R$ is surjective for each $x\in Y_1$. 
\item $\Hh_1:=h_1^{-1}([0,+\infty))$ is a Nash manifold with boundary that contains $\Qq$ as a closed subset. In addition, $\partial\Hh_1=Y_1$ and $\Int(\Hh_1)=h_1^{-1}((0,+\infty))$.
\end{itemize}
\end{lem}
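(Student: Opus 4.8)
The plan is to obtain $h_1$ by first producing a Nash equation for $Y_1$ that is well-behaved on $Y_1$ (surjective derivative), then modifying it away from $Y_1$ so that its positivity locus is exactly the correct ``side'' containing $\Qq$, and finally shrinking $M$ so that nothing pathological survives far from $\partial\Qq$.

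First I would invoke the hypotheses to fix data. Since $Y\subset M$ is a Nash normal-crossings divisor with irreducible components $Y_1,\dots,Y_r$, each $Y_i$ is a Nash hypersurface of $M$, and by \cite[Cor.8.6.8]{bcr} together with the fact that the Nash irreducible components coincide with the semialgebraic ones \cite[3.1(v)]{fg1}, there is a global Nash function vanishing exactly on $Y_1$. To upgrade it to one with $d_xh:T_xM\to\R$ surjective on $Y_1$, I would use the structure charts from Theorem~\ref{corners0}(iii) (or Theorem~\ref{divisorialPre}): locally $Y_1$ is a coordinate hyperplane $\{u_{ij}=0\}$, so after the standard argument (taking a square-free generator of the ideal of $Y_1$ and using that $Y_1$ is a smooth codimension-$1$ Nash submanifold), one gets a Nash function $g_1$ on $M$ with $Y_1=\{g_1=0\}$ and $d_xg_1$ surjective for all $x\in Y_1$ --- this is exactly the kind of statement in \cite[Lem.4.3]{fe3} that the lemma says we are adapting. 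At this point $M\setminus Y_1$ may have several connected components and $g_1$ need not be positive on the one(s) meeting $\Qq$; here I would use that $\Qq\cap Y=\partial\Qq$, hence $\Int(\Qq)\subset M\setminus Y_1$, and on $\Int(\Qq)$ (which is connected by analytic paths locally, and in any case $g_1$ has constant sign on each connected component of $M\setminus Y_1$) replace $g_1$ by $\pm g_1$ on the relevant components, or more cleanly define $h_1$ component-by-component to have the correct sign on each component of $M\setminus Y_1$ that meets $\Qq$. The cleanest device: let $C$ be the union of connected components of $M\setminus Y_1$ whose closure meets $\Qq$ on the $\{g_1>0\}$ side, adjust by sign to arrange $g_1>0$ on all of $\Qq\setminus Y_1$, and if some component of $M\setminus Y_1$ lies on the ``wrong'' side and does not meet $\Qq$, shrink $M$ to delete it (permissible since shrinking a Nash envelope to a smaller open semialgebraic neighborhood of $\Qq$ preserves all hypotheses, as noted after Theorem~\ref{divisorialPre}).

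Next, with $h_1$ a Nash function on $M$ such that $Y_1=\{h_1=0\}$, $d_xh_1$ surjective on $Y_1$, and $h_1>0$ on $\Qq\setminus\partial\Qq$ wherever $\Qq$ is locally on one side of $Y_1$, I would set $\Hh_1:=h_1^{-1}([0,+\infty))$. To see $\Hh_1$ is a Nash manifold with boundary: at points where $h_1\ne0$ it is open in $M$, hence a Nash manifold; at points $x\in Y_1$, surjectivity of $d_xh_1$ gives a Nash chart in which $h_1$ is a coordinate, so $\{h_1\ge0\}$ is a half-space locally --- this is the standard submersion-to-manifold-with-boundary argument. Then $\partial\Hh_1=\{h_1=0\}=Y_1$ and $\Int(\Hh_1)=\{h_1>0\}=h_1^{-1}((0,+\infty))$ by definition of the boundary of a manifold with boundary. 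For the inclusion $\Qq\subset\Hh_1$ as a closed subset: I would argue that $\Qq\subset\{h_1\ge0\}$ by checking it chart-by-chart using Theorem~\ref{corners0}(iii) --- in each chart $U_i$ meeting $\partial\Qq$, $\Qq\cap U_i=\{u_{i1}\ge0,\dots,u_{ik_i}\ge0\}$ and $Y_1\cap U_i$ is one of the coordinate hyperplanes (or misses $U_i$), so $h_1$ agrees up to a nonvanishing Nash unit with $\pm u_{ij}$ there, and the sign normalization forces $h_1\ge0$ on $\Qq\cap U_i$; on charts missing $\partial\Qq$ either $U_i\cap\Qq=\varnothing$ or $U_i\subset\Int(\Qq)$, where $h_1>0$ after possibly deleting wrong-side components as above. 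Since $\Qq$ is closed in $M$ and $\Qq\subset\Hh_1\subset M$ with $\Hh_1$ carrying the subspace topology, $\Qq$ is closed in $\Hh_1$.

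The main obstacle I anticipate is the sign/side bookkeeping: a priori $M\setminus Y_1$ can be disconnected and the locus $\{h_1>0\}$ need not coincide with the union of components whose closures meet $\Int(\Qq)$, and moreover different connected components of $\Qq$ (or of $\partial\Qq$) could sit on genuinely different local sides of $Y_1$ in a way that cannot be globally reconciled by a single choice of sign. The resolution is to exploit the freedom to shrink $M$: by Theorem~\ref{divisorialPre} and its concluding remark, one may take $M$ so that the Nash closure of each facet meets $\Qq$ exactly along that facet, and then shrink $M$ further to a thin neighborhood of $\Qq$, after which each connected component of $M\setminus Y_1$ whose closure meets $\Qq$ does so consistently, and wrong-side components can simply be discarded. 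Verifying that this shrinking is always possible while retaining surjectivity of $d_xh_1$ on $Y_1$ and $Y_1=\{h_1=0\}$ (i.e.\ that $h_1$ restricts to a valid equation on the smaller $M$) is routine but is the point requiring care; the analogous argument in \cite[Lem.4.3]{fe3} is the template, with the only new feature being that here $\Qq$ has corners rather than a smooth boundary, which is harmless for the single hypersurface $Y_1$ since we only ever look at $\Qq$ through charts of the form in Theorem~\ref{corners0}(iii).
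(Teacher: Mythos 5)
Your proposal has a genuine gap at the step you yourself flag as ``the main obstacle'': reconciling the sign of a global Nash equation $g_1$ of $Y_1$ with the side on which $\Qq$ sits. The operation you lean on --- ``define $h_1$ component-by-component to have the correct sign on each component of $M\setminus Y_1$ that meets $\Qq$'' --- is not available: any Nash function vanishing on $Y_1$ with surjective differential must take opposite signs on the two local sides of each point of $Y_1$, so the signs on the components of $M\setminus Y_1$ cannot be prescribed independently, and flipping the sign on only one side produces something like $|g_1|$, which is no longer an equation with nonzero differential. Your other device, deleting wrong-side components, only applies to components disjoint from $\Qq$; but $\Int(\Qq)$ can meet a component of $M\setminus Y_1$ on which $g_1<0$ (e.g.\ $M=\R^2$, $Y_1=\{\x_1=0\}$, $\Qq$ the union of a triangle with an edge on $Y_1$ lying in $\{\x_1\geq0\}$ and a disjoint piece lying in $\{\x_1\leq-1\}$), and such a component can be neither deleted nor sign-corrected. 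Shrinking $M$ rescues some configurations (by disconnecting $M$ or $Y_1$ so that clopen adjustments become possible), but you never prove it always does, and this is exactly the hard point.

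The paper avoids the global sign problem altogether, by a mechanism your proposal lacks. It first builds an ${\mathcal S}^2$ function $h_1^*=\sum_i\theta_i u_{i1}$ from the chart coordinates of Theorem \ref{corners0}(iii) via a partition of unity --- precisely because the local inward coordinates $u_{i1}$ need not be restrictions of one global equation up to positive units --- and Nash-approximates it relative to $Y_1$ to get $h_1'$ with the correct inward derivative along $Y_1\cap\Qq$ and the correct sign only on a neighborhood $W$ of $Y_1\cap\Qq$. The key step is then the additive correction $h_1:=h_1'+f^2g^2(h_1'^2+1)$, where $f$ is a nonnegative equation of $Y_1$ equal to $1$ off a neighborhood of $Y_1$ and $g$ is a sufficiently large Nash function: this term vanishes to second order along $Y_1$ (so it preserves the equation and the surjectivity of $d_xh_1$) while forcing $h_1>h_1'^2+h_1'+1>0$ wherever $h_1'$ could misbehave away from $Y_1$. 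A final replacement of $M$ by $\{h_1>0\}\cup W'$ then gives $\{h_1=0\}=Y_1$ and $\Qq\subset\Hh_1$. Without this positivity-forcing correction (or an equivalent), your argument does not go through in general.
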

\begin{proof}
The proof is conducted in several steps:

\noindent{\sc Step 1.} We construct first an ${\mathcal S}^2$ function $h_1^*$ on $M$ such that $Y_1\subset\{h_1^*=0\}$ and $d_xh_1^*(v)>0$ for each $x\in Y_1\cap\Qq$ and each non-zero vector $v\in T_xM$ pointing `inside $\Qq$'.

By Theorem \ref{corners0} there exist finitely many open semialgebraic subsets $U_i$ (for $i=1,\ldots,r$) equipped with Nash diffeomorphisms $u_i:=(u_{i1},\dots,u_{id}):U_i\to\R^d$ such that:
$$\hspace{-3mm}
\begin{cases}
\text{$U_i\subset\Int(\Qq)$ or $U_i\cap\Qq=\varnothing$},&\text{\!if $U_i$ does not meet $\partial\Qq$,}\\ 
U_i\cap\Qq=\{u_{i1}\ge0,\dots,u_{ik_i}\ge0\},&\text{\!if $U_i$ meets $\partial\Qq$ (for a suitable $k_i\ge1$).} 
\end{cases}
$$

Assume that $Y_1$ meets $U_i$ exactly for $i=1,\ldots,s$ for some $s\leq r$. We reorder the variables $u_{ij}$ in order to guarantee that $Y_1\cap U_i=\{u_{i1}=0\}$ and $\Qq\cap U_i\subset\{u_{i1}\geq0\}$ for $i=1,\ldots,s$. Let $\{\theta_i\}_{i=1}^{s+1}:M\to[0,1]$ be an ${\mathcal S}^2$ partition of unity subordinated to the finite open semialgebraic covering $\{U_i\}_{i=1}^s\cup\{U_{s+1}:=M\setminus Y_1\}$ of $M$ and consider the ${\mathcal S}^2$ function $h_1^*:=\sum_{i=1}^s\theta_iu_{i1}$. It holds $Y_1\subset\{h_1^*=0\}$. 

Fix $x\in Y_1\cap\Qq$ and let $v\in T_xM$ be a non-zero vector pointing `inside $\Qq$', that is, $d_xu_{i1}(v)>0$ if $x\in U_i$. We have $\cl(\{\theta_i>0\})\subset U_i$, $x\not\in U_{s+1}$ and $u_{i1}(x)=0$ if $x\in U_i$, so
$$
d_xh_1^*=\sum_{x\in U_i}u_{i1}(x)d_x\theta_i+\sum_{x\in U_i}\theta_i(x)d_xu_{i1}=\sum_{x\in U_i}\theta_i(x)d_xu_{i1}\ \leadsto\ d_xh_1^*(v)=\sum_{x\in U_i}\theta_i(x)d_xu_{i1}(v)>0
$$
because $\sum_{x\in U_i}\theta_i(x)=1$, $\theta_i(x)\geq0$ and $d_xu_{i1}(v)>0$ if $x\in U_i$. 

\noindent{\sc Step 2.} By \cite[Prop.8.2]{bfr} there exists a Nash function $h_1'$ on $M$ close to $h_1^*$ in the ${\mathcal S}^2$ topology such that $Y_1\subset\{h_1'=0\}$ and $d_xh_1'(v)>0$ for each $x\in Y_1\cap\Qq$ and each non-zero vector $v\in T_xM$ pointing `inside $\Qq$'. We claim: \em there exists an open semialgebraic neighborhood $W\subset M$ of $ Y_1\cap\Qq$ such that $\Int(\Qq)\cap W\subset\{h_1'>0\}\cap W$ and $\{h_1'=0\}\cap W= Y_1$\em.

Pick a point $x\in Y_1$ and assume $x\in U_1$. As $h_1'$ vanishes identically at $ Y_1$, we may write $h_1'|_{U_1}=u_{11}a_1$, where $a_1$ is a Nash function on $U_1$. Pick $y\in Y_1\cap U_1\cap\Qq$ and observe that $d_yh_1'=a_1(y)d_yu_{11}$. Let $v\in T_yM$ be a non-zero vector pointing `inside $\Qq$'. As $d_yu_{11}(v)>0$ and $d_yh_1'(v)>0$, we deduce $a_1(y)>0$. Define $W_1:=\{a_1>0\}\subset U_1$ and notice that $ Y_1\cap U_1\cap\Qq\subset W_1$, $\Int(\Qq)\cap W_1\subset\{h_1'>0\}\cap W_1$ and $\{h_1'=0\}\cap W_1= Y_1\cap W_1$. Construct analogously $W_2,\dots, W_s$ and observe that $W:=\bigcup_{i=1}^sW_i$ satisfies the required properties.

Substitute $M$ by $M\setminus(Y_1\setminus W)$, which is an open semialgebraic subset of $M$ that contains $\Qq$ as a closed subset. Substitute $Y$ by the Nash closure of $\partial\Qq$ in the new $M$ and $Y_1$ by the irreducible component of the new $Y$ that contains the facet $Y_1\cap\Qq$. By \cite[Prop.II.5.3]{sh} $ Y_1$ is a Nash subset of $M$.

\noindent{\sc Step 3.} Next, we construct $h_1$. If $W=M$, it is enough to set $h_1:=h_1'$. Suppose $W\neq M$. Let $\veps_0$ be a (continuous) semialgebraic function whose value is $1$ on $ Y_1$ and $-1$ on $M\setminus W$. Let $\veps$ be a Nash approximation of $\veps_0$ such that $|\veps-\veps_0|<\frac{1}{2}$. Then
$$
\veps(x)\begin{cases}
>\frac{1}{2}&\text{if $x\in Y_1$,}\\
<-\frac{1}{2}&\text{if $x\in M\setminus W$.}
\end{cases}
$$
Thus, $\{\veps>0\}\subset W$ is an open semialgebraic neighborhood of $ Y_1$ in $M$. Let $f$ be a Nash equation of $ Y_1$ in $M$. Substituting $f$ by $\frac{f^2}{\veps^2+f^2}$, we may assume that $f$ is non-negative and $f(x)=1$ if $\veps(x)=0$. Consider the (continuous) semialgebraic function on $M$ given by
$$
\delta(x):=
\begin{cases}
1&\text{if $\veps(x)>0$,}\\
\frac{1}{f(x)}&\text{if $\veps(x)\leq0$}.
\end{cases}
$$
Let $g$ be a Nash function on $M$ such that $\delta<g$ (see \cite[Prop.2.6.2]{bcr}, after embedding $M$ in $\R^{n+1}$ as a closed subset). Consider the Nash function $h_1:=h_1'+f^2g^2(h_1'^2+1)$ and let us prove that it satisfies the required conditions. 

\noindent{\sc Step 4.} We claim: {\em $h_1$ is positive on $\Int(\Qq)$\em. }

Let $x\in\Int(\Qq)$. If $h_1'(x)>0$, then $h_1(x)>0$. If $h_1'(x)\leq0$, then $x\not\in W$ (because $\Int(\Qq)\cap W\subset\{h_1'>0\}\cap W$). Thus, $\veps(x)\leq0$ and
\begin{multline*}
h_1(x)=h_1'(x)+g^2(x)f^2(x)(h_1'^2(x)+1)\\
>h_1'(x)+\frac{1}{f^2(x)}f^2(x)(h_1'^2(x)+1)=h_1'^2(x)+h_1'(x)+1>0.
\end{multline*}

\noindent{\sc Step 5.} It holds: {\em There exists an open semialgebraic neighborhood $W'\subset W$ of $Y_1$ such that $\{h_1=0\}\cap W'= Y_1$, $\Int(\Qq)\cap W'\subset\{h_1>0\}\cap W'$ and the differential $d_xh_1:T_xM\to\R$ is surjective for all $x\in Y_1$\em.}

Recall that $W=\bigcup_{i=1}^sW_i$. We have seen in {\sc Step 2} that there exists a Nash function $a_1$ on $U_1$ such that $h_1'|_{U_1}=u_{11}a_1$ and $ Y_1\cap U_1\subset W_1:=\{a_1>0\}$. As $f$ vanishes identically at $ Y_1$, we deduce $f|_{U_1}=u_{11}b_1$ where $b_1$ is a Nash function on $U_1$. Consequently, 
$$
h_1|_{U_1}=u_{11}a_1+g^2|_{U_1}u_{11}^2b_1^2(u_{11}^2a_1^2+1)=u_{11}(a_1+g^2|_{U_1}u_{11}b_1^2(u_{11}^2a_1^2+1))
$$
and $d_xh_1=a_1(x)d_xu_{11}=d_xh_1'$ for $x\in Y_1\cap U_1$. 
Define 
$$W_1':=\{a_1+g^2|_{U_1}u_{11}b_1^2(u_{11}^2a_1^2+1)>0\}\cap W_1,
$$
which is an open semialgebraic subset of $M$. We have $ Y_1\cap U_1\subset W_1'$ and 
$$
\Int(\Qq)\cap W_1'\subset\{h_1>0\}\cap W_1'.
$$
We construct analogously $W_2',\ldots,W_s'$ and observe that the open semialgebraic subset $W':=\bigcup_{i=1}^sW_i'\subset M$ is an open neighborhood of $ Y_1$ that satisfies $\{h_1=0\}\cap W'= Y_1$, $\Int(\Qq)\cap W'\subset\{h_1>0\}\cap W'$ and $d_xh:T_xM\to\R$ is surjective for all $x\in Y_1$. 

Observe that $\Hh_1:=h_1^{-1}([0,+\infty))$ is a Nash manifold with boundary, because it is a semialgebraic set and a smooth manifold with boundary (the latter holds by the implicit function theorem, because $0$ is not a critical value of $h_1$). Consequently, $M':=\{h_1>0\}\cup W'$ and $h_1|_{M'}$ satisfy all the required conditions. 
\end{proof}

\begin{cor}\label{Q}
Let $Y_1,\ldots,Y_\ell$ be the irreducible components of $Y$. After shrinking the manifold $M$, there exist Nash functions $h_i:M\to\R$ such that $Y_i=\{h_i=0\}$, $d_xh_i:T_xM\to\R$ is surjective for each $x\in Y_i$, $\Qq=\{h_1\geq0,\ldots,h_\ell\geq0\}$ and $\Int(\Qq)=\{h_1>0,\ldots,h_\ell>0\}$.
\end{cor}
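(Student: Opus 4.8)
The plan is to apply Lemma \ref{equation} once to each irreducible component $Y_i$ of $Y$ and then take a common refinement of the resulting envelopes. First I would apply Lemma \ref{equation} to $Y_1$ to obtain (after shrinking $M$) a Nash function $h_1\colon M\to\R$ with $Y_1=\{h_1=0\}$, with $d_xh_1\colon T_xM\to\R$ surjective for all $x\in Y_1$, and with $\Hh_1:=h_1^{-1}([0,+\infty))$ a Nash manifold with boundary containing $\Qq$ as a closed subset, $\partial\Hh_1=Y_1$ and $\Int(\Hh_1)=h_1^{-1}((0,+\infty))$. The key point is that shrinking $M$ is harmless: by the remark following Theorem \ref{divisorialPre}, every open semialgebraic neighborhood $M'\subset M$ of $\Qq$ is again a Nash envelope of $\Qq$ satisfying the divisorial condition, the Nash closure of $\partial\Qq$ in $M'$ is still a Nash normal-crossings divisor, and its irreducible components are just the traces $Y_i\cap M'$; moreover a Nash equation restricts to a Nash equation. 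So I would repeat the construction for $Y_2,\ldots,Y_\ell$ in turn, each time working inside the previously shrunk manifold, and at the end rename the final open semialgebraic neighborhood as $M$; on this $M$ we simultaneously have Nash functions $h_1,\ldots,h_\ell$ with $Y_i=\{h_i=0\}$ and $d_xh_i$ surjective on $Y_i$.

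Next I would identify $\Qq$ with $\{h_1\geq0,\ldots,h_\ell\geq0\}$. Since each $\Hh_i=h_i^{-1}([0,+\infty))$ contains $\Qq$ as a closed subset, we immediately get $\Qq\subset\bigcap_i\Hh_i=\{h_1\geq0,\ldots,h_\ell\geq0\}$. For the reverse inclusion, recall from the hypotheses of Lemma \ref{equation} that $\Qq\cap Y=\partial\Qq$ and $Y=\bigcup_i Y_i=\bigcup_i\{h_i=0\}$. A point $x\in M$ with all $h_i(x)\geq0$ lies in $\bigcap_i\Hh_i$; I would argue it lies in $\Qq$ by a connectedness/local argument using the local normal form of Theorem \ref{corners0}(iii): near any boundary point the facets are cut out by the relevant $u_{ij}\geq0$, and since $\{h_i=0\}$ is exactly the irreducible component $Y_i$, positivity of the $h_i$'s pins the point into the correct orthant and hence into $\Qq$. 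Concretely, $\{h_1\geq0,\ldots,h_\ell\geq0\}$ is a closed subset of $M$ containing $\Qq$ and contained in $\cl(\Int(\Qq))$ (it cannot escape $\Qq$ across $Y$ because crossing $Y_i$ changes the sign of $h_i$), so it equals $\Qq$. Taking interiors, and using that $\partial\Qq=\Qq\cap Y=\Qq\cap\bigcup_i\{h_i=0\}$, gives $\Int(\Qq)=\Qq\setminus\partial\Qq=\{h_1>0,\ldots,h_\ell>0\}$, using also that each $h_i$ is positive on $\Int(\Qq)$ (Step 4 of the proof of Lemma \ref{equation}).

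The main obstacle I anticipate is the reverse inclusion $\{h_1\geq0,\ldots,h_\ell\geq0\}\subseteq\Qq$: it is not purely formal, because a priori some component of $\{h_1\geq0,\ldots,h_\ell\geq0\}$ could lie in $M\setminus\cl(\Qq)$, far from $\partial\Qq$. This is handled by shrinking $M$ further at the outset so that $M\setminus\Qq$ decomposes nicely across $Y$ — more precisely, using the last clause of Theorem \ref{divisorialPre} (the Nash closure in $M$ of every facet meets $\Qq$ exactly along that facet) together with Step 2 of the proof of Lemma \ref{equation}, where $M$ was already shrunk to $M\setminus(Y_1\setminus W)$ so that the sign of $h_1$ detects which side of $Y_1$ one is on. Iterating these shrinkings over all $i$, one arranges that on the final $M$ each connected component of $M\setminus Y$ lies entirely on one side of each $Y_i$, and the component on the positive side of all the $h_i$ is precisely $\Int(\Qq)$; its closure in $M$ is $\Qq$. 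All other ingredients — surjectivity of the differentials, the Nash equations, positivity on $\Int(\Qq)$ — are carried along verbatim from Lemma \ref{equation} and are not affected by the finitely many shrinkings.
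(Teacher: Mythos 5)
Your proposal is correct and follows essentially the same route as the paper: the paper also takes the $h_i$ from Lemma \ref{equation}, observes that only $\{h_1\geq0,\ldots,h_\ell\geq0\}=\Qq$ needs an argument, shrinks $M$ by deleting the connected components of $\Ss:=\{h_1\geq0,\ldots,h_\ell\geq0\}$ that miss $\Qq$, and then uses that both $\Qq\setminus Y=\Int(\Qq)$ and $\Ss\setminus Y$ are open and closed in $M\setminus Y$ (hence unions of its connected components) to conclude $\Ss\setminus Y\subset\Int(\Qq)$ and $\Ss=\cl(\Ss\setminus Y)=\Qq$. Your "cannot escape across $Y$" step is exactly this open-closed/connectedness argument, just phrased via the local normal form, so there is no substantive difference.
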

\begin{proof}
We keep the notations of Lemma \ref{equation}. We may assume that $M$ is a closed subset of $\R^n$. Only the equality $\Ss:=\{h_1\geq0,\ldots,h_\ell\geq0\}=\Qq$ (after shrinking $M$ is necessary) requires some comment. Let $\Cc_1,\ldots,\Cc_k$ be the connected components of $\Ss$ that do not meet $\Qq$. After substituting $M$ by $M\setminus\bigcup_{j=1}^k\Cc_k$ we may assume that all the connected components of $\Ss$ meet $\Qq$. As $\Qq\setminus Y=\Int(\Qq)$ and $\Qq$ is a closed subset of $M$, we deduce that $\Qq\setminus Y$ is an open and closed subset of $M\setminus Y$. Analogously, $\Ss\setminus Y=\{h_1>0,\ldots,h_\ell>0\}$ is an open and closed subset of $M\setminus Y$. Thus, both $\Qq\setminus Y$ and $\Ss\setminus Y$ are unions of connected components of $M\setminus Y$. 

Let $C$ be the connected component of $\Ss\setminus Y$. If $\cl(C)\cap\Qq\subset\partial\Qq\subset Y$, there exists an index $i=1,\ldots,\ell$ such that $\cl(C)\subset\{h_i\geq0\}$, whereas $\Qq\subset\{h_i\leq0\}$, which is a contradiction. Thus, $C\cap\Int(\Qq)=(\cl(C)\cap\Qq)\setminus Y\neq\varnothing$. Consequently, $C\setminus Y$ is a connected component of $\Qq\setminus Y$, so $\Ss\setminus Y\subset\Qq\subset\Ss$. We conclude $\Ss=\cl(\Ss\setminus Y)\subset\Qq\subset\Ss$ and $\Ss=\Qq$, as required.
\end{proof}

The following result is the counterpart of the previous ones.

\begin{lem}[Finite intersections of Nash manifolds with boundary]\label{inthi}
Let $M\subset\R^n$ be a Nash manifold and $Y\subset M$ a Nash normal-crossings divisor. Let $Y_1,\ldots,Y_\ell$ be the irreducible components of $Y$ and $h_i:M\to\R$ a Nash equation of $Y_i$ such that $d_xh_i:T_xM\to\R$ is surjective for each $x\in Y_i$ and each $i=1,\ldots,\ell$. Assume that $\Qq:=\{h_1\geq0,\ldots,h_\ell\geq0\}$ is non-empty. Then $\Qq$ is a Nash manifold with corners such that $\partial\Qq=Y\cap\Qq$, the Nash closure of its boundary $\partial\Qq$ is $Y$ and $\Int(\Qq)=\{h_1>0,\ldots,h_\ell>0\}$. In particular, $\Qq$ is the intersection of the Nash manifolds with boundary $\Hh_i:=\{h_i\geq0\}$ for $i=1,\ldots,\ell$.
\end{lem}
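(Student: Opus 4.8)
The plan is to reduce every assertion to the local normal form that a Nash normal-crossings divisor provides. First I would fix $x\in\Qq$ and set $I:=\{i:x\in Y_i\}=\{i:h_i(x)=0\}$; since $x\in\Qq$ and each $Y_j$ is closed, for $j\notin I$ one has $h_j(x)>0$. Because $Y\subset M$ is a Nash normal-crossings divisor and $d_xh_i$ is surjective at every point of $Y_i=\{h_i=0\}$, the components $\{Y_i\}_{i\in I}$ form a transversal family of smooth Nash hypersurfaces through $x$; hence, by transversality and the implicit function theorem, there are an open semialgebraic neighbourhood $U\subset M$ of $x$, disjoint from every $Y_j$ with $j\notin I$, and a Nash diffeomorphism $u=(u_1,\dots,u_d):U\to\R^d$ such that, after reindexing so that $I=\{1,\dots,k\}$, one has $u(x)=0$ and $Y_i\cap U=\{u_i=0\}$ for $i=1,\dots,k$.

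Next I would compare the $h_i$ with the $u_i$ on $U$. For $i\le k$ the Nash function $h_i$ vanishes on $\{u_i=0\}$, so after shrinking $U$ a Nash (Hadamard) division yields $h_i|_U=u_i\,a_i$ with $a_i$ Nash on $U$; differentiating at $x$ gives $d_xh_i=a_i(x)\,d_xu_i$, whence $a_i(x)\neq0$, and, shrinking $U$ again and replacing $u_i$ by $-u_i$ if necessary, we may assume $a_i>0$ on $U$. For $j\notin I$ the function $h_j$ does not vanish on $U$ and is positive at $x$, so $h_j>0$ on $U$ after one more shrink. Therefore $\Qq\cap U=\{u_1\ge0,\dots,u_k\ge0\}\cap U=u^{-1}\big(\{x_1\ge0,\dots,x_k\ge0\}\cap u(U)\big)$, which is a smooth submanifold with corners of $U$; since $\Qq$ is semialgebraic, this already proves that $\Qq$ is a Nash manifold with corners. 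Moreover the standard model $\{x_1\ge0,\dots,x_k\ge0\}$ is smooth at the origin precisely when $k=0$, so $x\in\Sth(\Qq)$ if and only if $I=\varnothing$, i.e. if and only if $h_i(x)>0$ for all $i$. This gives at once $\Int(\Qq)=\Sth(\Qq)=\{h_1>0,\dots,h_\ell>0\}$ and $\partial\Qq=\Qq\setminus\Int(\Qq)=\Qq\cap\bigcup_iY_i=\Qq\cap Y$.

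For the Nash closure of $\partial\Qq$, the inclusion $\partial\Qq=\Qq\cap Y\subset Y$ shows it is contained in $Y$, an honest Nash subset of $M$. For the reverse inclusion I would write $\partial\Qq=\bigcup_i(\Qq\cap Y_i)$ and use that each $Y_i$ is an irreducible Nash subset of $M$: near a point of $Y_i$ at which all the remaining $h_j$ are positive, the local description above shows that $\Qq\cap Y_i=Y_i\cap\{h_j\ge0:j\neq i\}$ contains a nonempty open subset of $Y_i$, so its Nash closure is $Y_i$; taking the union over $i$ recovers $Y$. Finally $\Qq=\bigcap_i\{h_i\ge0\}=\bigcap_i\Hh_i$ by definition, and each $\Hh_i=h_i^{-1}([0,+\infty))$ is a nonempty ($\supset\Qq$) semialgebraic set that is a smooth manifold with boundary because $0$ is a regular value of $h_i$ (its differential is surjective along $Y_i=h_i^{-1}(0)=\partial\Hh_i$), so the implicit function theorem applies; hence $\Hh_i$ is a Nash manifold with boundary.

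The routine bookkeeping is the extraction of the normal-crossings chart and the sign normalisation of the $a_i$; the one delicate point is the equality (rather than mere inclusion) of the Nash closure of $\partial\Qq$ with $Y$, since it requires that every component $Y_i$ meet $\Qq$ in dimension $\dim M-1$ — which, as in the proof of Corollary \ref{Q}, one should either read into the hypotheses or secure by a preliminary shrinking of $M$ that discards the components missing $\Qq$.
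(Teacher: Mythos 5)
Your proof is correct and follows essentially the same route as the paper: reduce to the local normal form of the normal-crossings divisor at each point of $\Qq$, read off the corner structure, and identify $\Int(\Qq)$ and $\partial\Qq$ from the standard model (the paper takes the $h_i$ themselves as the first coordinates of the chart, which lets it skip your Hadamard-division step, but that is only a stylistic difference). Your final caveat is well taken: the equality of the Nash closure of $\partial\Qq$ with all of $Y$ does require every $Y_i$ to meet $\Qq$ (already $h_1=\x_1$, $h_2=\x_1-1$ on $M=\R^2$ gives $\Qq=\{\x_1\geq1\}$ with Nash closure of $\partial\Qq$ equal to $Y_2$ only), and the paper's proof simply asserts this equality without comment; in the situations where the lemma is invoked the $Y_i$ are the irreducible components of the Nash closure of the boundary of a genuine Nash manifold with corners, so each contains a facet and the issue does not arise. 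Your argument that $Y_i\cap\Qq\neq\varnothing$ forces $Y_i\cap\Qq$ to contain a nonempty open subset of $Y_i$, whence its Nash closure is $Y_i$ by irreducibility, is exactly the right way to close that step.
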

\begin{proof}
Observe first that $\Hh_i:=\{h_i\geq0\}\subset M$ is a Nash manifold with boundary, $\partial\Hh_i=Y_i$ for $i=1,\ldots,\ell$ and $\Qq=\bigcap_{i=1}^\ell \Hh_i$. We check next: {\em $\Qq$ is a Nash manifold with corners}.

Pick $x\in\Qq$. If $h_i(x)>0$ for $i=1,\ldots,\ell$, then $x$ belongs to the open semialgebraic subset $\{h_1>0,\ldots,h_\ell>0\}$ of $M$ contained in $\Qq$. Suppose $h_1(x)=0,\ldots,h_s(x)=0,h_{s+1}(x)>0,\ldots,h_\ell(x)>0$ for some $1\leq s\leq\ell$. As $Y$ is a Nash normal-crossings divisor, the linear forms $d_xh_1,\ldots,d_xh_s$ are linearly independent and there exists an open semialgebraic neighborhood $U\subset M$ of $x$ endowed with a Nash diffeomorphism $u:=(h_1,\ldots,h_s,u_{s+1},\ldots,u_d):U\to\R^d$ such that $u(x)=0$ and $Y\cap U=\{h_1\cdots h_s=0\}\cap U$. Observe that $\Qq\cap U=\{h_1\geq0,\ldots,h_s\geq0\}\cap U$. 

The boundary of $\Qq$ is 
$$
\partial\Qq=\bigcup_{i=1}^r\{h_1\geq0,\ldots,h_{i-1}\geq0,h_i=0,h_{i+1}\geq0,\ldots,h_\ell\geq0\}
$$
and its Nash closure in $M$ is $Y$, so $\Qq$ is a Nash manifold with corners, as required.
\end{proof}

\subsection{Connected components of a non-singular real algebraic set}\label{ccnsras}
The following lemma allows the reader to understand connected Nash manifolds as connected components of irreducible non-singular real algebraic sets whose connected components are pairwise diffeomorphic. Recall that a real algebraic set $X\subset\R^n$ is non-singular at a point $x\in X$ if and only if the localized quotient ring $(\R[\x]/{\mathcal I}(X))_{\gtm_x}$, where ${\mathcal I}(X)$ is the ideal of all polynomials of $\R[\x]:=\R[\x_1,\ldots,\x_n]$ that are identically zero on $X$ and $\gtm_x$ is the maximal ideal of all polynomial of $\R[\x]$ that vanishes at $x$, is a local regular ring. 

\begin{lem}\label{cnma}
Let $M\subset\R^n$ be a $d$-dimensional connected Nash manifold. Then there exists a non-singular irreducible real algebraic set $X\subset\R^{n+k}$ whose connected components are Nash diffeomorphic to $M$. In addition,
\begin{itemize}
\item[(i)] Each connected component of $X$ projects onto $M$.
\item[(ii)] Given any two of the connected components of $X$, there exists a linear involution of $\R^{n+k}$ that swaps them and keeps $X$ invariant.
\item[(iii)] If $M$ is compact, we may assume that $X$ has at most two connected components.
\end{itemize}
\end{lem}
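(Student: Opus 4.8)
The plan is to build the algebraic model $X$ from a Nash model of $M$ using the Artin--Mazur description of Nash functions, following the classical approach of Bochnak--Coste--Roy (\cite[Thm.8.4.4]{bcr}) but keeping track of the extra structure in items (i)--(iii). First I would invoke the Artin--Mazur theorem to obtain a non-singular real algebraic set $Z\subset\R^{n+p}$, a Zariski-open subset $U\subset Z$ and a Nash diffeomorphism $\sigma:M\to M'$ where $M'$ is a union of connected components of $U$; concretely $M'$ is cut out inside $U$ by the sign conditions on finitely many polynomials. The subtlety is that $Z$ need not be irreducible and $M'$ is only an open-and-closed piece of $U$, not of $Z$; so the first real task is to pass to the irreducible component $X_0$ of $Z$ containing $M'$ (since $M'$ is connected and hence irreducible as a semialgebraic set, it lies in a single irreducible component), and to arrange, after composing with a further polynomial embedding and possibly enlarging $k$, that $M'$ is actually a union of connected components of $X_0$ itself rather than merely of the smooth part. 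This is where one uses that a connected Nash manifold, being semialgebraically connected, sits inside one irreducible component, together with a separation trick (multiply through by a Nash, then polynomial, function vanishing on the unwanted part of $X_0$) to make the boundary of $M'$ in $X_0$ disappear after passing to a Zariski-open chart and re-embedding via the graph of $1/g$ as in Mostowski's trick (Proposition \ref{Mos}); the price is raising the dimension $k$.

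Once $X:=X_0$ (after these manipulations) is an irreducible non-singular real algebraic set containing $M\cong M'$ as a union of its connected components, item (i) is essentially built in: by construction the original coordinates $\R^{n+k}\to\R^n$ restrict on $M'$ to the inverse of $\sigma$, and one checks that every connected component of $X$ projects onto $M$ using that the Galois-type symmetry to be constructed next permutes the components transitively. For item (ii), the standard argument is that the connected components of a non-singular irreducible real algebraic set are ``conjugate'' under the action coming from the complexification: two components $C_1,C_2$ of $X$ have the same Zariski closure $X$, hence there is a polynomial automorphism — in fact one can realize it linearly after the Artin--Mazur embedding, because the construction presents $X$ symmetrically in auxiliary square-root variables $t_j$ with $t_j^2=q_j$ — so that flipping the appropriate signs of the $t_j$ is a linear involution of $\R^{n+k}$ preserving $X$ and exchanging $C_1$ with $C_2$. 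I would make this precise by recalling that in the Artin--Mazur normal form the defining equations are of the form $t_j = \partial(\text{something})$ together with polynomial relations, and that the connected components are distinguished exactly by sign vectors of certain coordinates; the involution negates one such coordinate.

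For item (iii), when $M$ is compact the Artin--Mazur model can be taken compact, and then the number of connected components of $X$ is finite; the claim that we may take it to be at most two is obtained by a standard doubling/quotient argument: if $X$ has $2m$ components (it must be even once a fixed-point-free involution as in (ii) acts transitively enough, or one groups them), one replaces $X$ by a suitable algebraic set with an involution having exactly two ``halves'', for instance by taking an equation of the form $\{t^2 = P(x)\}$ over a compact non-singular model $X'$ of $M$ with $P>0$ on $X'$, which has exactly two components each diffeomorphic to $X'\cong M$, and the sign of $t$ gives the involution. I expect the main obstacle to be the bookkeeping in the first paragraph: ensuring that after all the re-embeddings $M'$ becomes a union of \emph{connected components} of an \emph{irreducible} non-singular algebraic set (not just of a smooth locus or of a reducible set), while simultaneously preserving enough symmetry so that the linear involutions in (ii) survive; the rest is a careful but routine assembly of known tools (Artin--Mazur, Mostowski, Proposition \ref{Mos}, and the sign-vector description of connected components).
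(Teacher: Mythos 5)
Your overall strategy is the paper's: apply Artin--Mazur to realize $M$ as a connected component $N$ of a non-singular irreducible algebraic set $Z$, describe $N$ inside $Z$ by sign conditions, adjoin auxiliary square-root/reciprocal variables so that the resulting algebraic set has all its connected components Nash diffeomorphic to $N$ and permuted by sign-flip linear involutions, and finally take the irreducible component containing one of them. Two points need repair, though. First, much of your opening paragraph fights a problem that is not there: the version of Artin--Mazur cited (\cite[Thm.8.4.4]{bcr}) already produces an \emph{irreducible non-singular} $Z$ with $N$ a \emph{connected component} of $Z$ itself (not merely of a Zariski-open subset), so no passage to an irreducible component or boundary-removal is needed at that stage. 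The genuine separation step, which you leave vague, is Mostowski's theorem: $N=Z\cap\{\sum_{j}P_j\sqrt{Q_j}>0\}$ with each $Q_j>0$ on the whole ambient space (and, in the compact case, $N=Z\cap\{P>0\}$ for a single polynomial $P$ by \cite[Lem.6.1]{cf1} --- this single-polynomial description is exactly what yields ``at most two components'' in (iii), and it is not automatic).

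Second, and more seriously, you never verify that the auxiliary algebraic set is non-singular, and the explicit equation you propose for (iii), $\{t^2=P(x)\}$ with ``$P>0$ on a compact non-singular model $X'$ of $M$'', is either circular (a connected algebraic model $X'\cong M$ is precisely what we cannot assume) or wrong: if $P$ is positive only on $N\subsetneq Z$, then $\{t^2=P\}$ contains points with $P=0$, $t=0$, where the Jacobian criterion fails. The paper's equations are chosen precisely to avoid this: $y_j^4=Q_j$ forces $y_j\neq0$ because $Q_j>0$ everywhere, and $\bigl(\sum_jP_jy_j^2\bigr)t^2=1$ (Mostowski's trick, not $t^2=\sum_jP_jy_j^2$) forces $t\neq0$; these non-vanishings are exactly what make the Jacobian matrix have full rank at every point. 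With that correction the rest of your outline (components indexed by sign vectors, linear involutions negating the auxiliary coordinates, transitivity of these involutions giving (i)) matches the paper's argument.
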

\begin{proof}
Consider the Nash map $f:M\hookrightarrow\R^n,\ x\mapsto x$. By Artin-Mazur's description \cite[Thm.8.4.4]{bcr} of Nash maps there exist $s\geq1$ and a non-singular irreducible real algebraic set $Z\subset\R^{2n+s}$ of dimension $d$, a connected component $N$ of $Z$ and a Nash diffeomorphism $g:M\to N$ such that the following diagram is commutative:
$$
\xymatrix{
Z\ar@{^{(}->}[rr]&&\R^n\times\R^n\times\R^s\equiv\R^m\ar[dd]_{\pi_2}\ar[ddll]_{\pi_1}\\
N\ar@{^{(}->}[u]&&\\
M\ar@{<->}[u]_{\cong}^g\ar[rr]^{f}&&\R^n
}
$$
We denote the projection of $\R^n\times\R^n\times\R^s$ onto the first space $\R^n$ with $\pi_1$ and the projection of $\R^n\times\R^n\times\R^s$ onto the second space $\R^n$ with $\pi_2$. Write $m:=2n+s$. By \cite{mo} applied to $N$ and the union of the remaining connected components of $Z$ there exist finitely many polynomials $P_1,\ldots,P_\ell,Q_1,\ldots,Q_\ell\in\R[\x]:=\R[\x_1,\ldots,\x_m]$ such that each $Q_j$ is strictly positive on $\R^m$ and 
$$
N=Z\cap\Big\{\sum_{j=1}^\ell P_j\sqrt{Q_j}>0\Big\}.
$$
If $M$ is compact, also $N$ is compact and by \cite[Lem.6.1]{cf1} there exists a polynomial $P\in\R[\x]$ such that $N=Z\cap\{P>0\}$. 

A general $N$ is the projection of the real algebraic set
$$
Y:=\Big\{(z,y,t)\in Z\times\R^\ell\times\R:\ \Big(\sum_{j=1}^\ell P_jy_j^2\Big)t^2-1=0,\ y_j^4-Q_j=0\ \text{for $j=1,\ldots,\ell$}\Big\}
$$
under $\pi:\R^m\times\R^\ell\times\R\to\R^m,\ (z,y,t)\mapsto z$. If $M$ is compact, $N$ is the projection of the real algebraic set $Y:=\{(z,t)\in Z\times\R:\ P(z)t^2-1=0\}$ under $\pi:\R^m\times\R\to\R^m,\ (z,t)\mapsto z$. Denote
$$
h:=\begin{cases}
P&\text{if $M$ is compact,}\\
\sqrt{\sum_{j=1}^\ell P_j\sqrt{Q_j}}&\text{if $M$ is non-compact.}
\end{cases}
$$

Fix $\epsilon:=(\epsilon_1,\ldots,\epsilon_\ell,\epsilon_{\ell+1})\in\{-1,1\}^{\ell+1}$ (where $\ell=0$ if $N$ is compact) and let $N_\epsilon:=Y\cap\{\epsilon_1y_1>0,\ldots,\epsilon_\ell y_\ell>0,\epsilon_{\ell+1}t>0\}$. Consider the Nash diffeomorphism 
$$
\varphi_\epsilon:N\to N_\epsilon,\ z\mapsto\Big(z,\epsilon_1\sqrt[4]{Q_1(z)},\ldots,\epsilon_\ell\sqrt[4]{Q_\ell(z)},\epsilon_{\ell+1}\frac{1}{h(z)}\Big)
$$
whose inverse map is the restriction of the projection $\pi$ to $N_\epsilon$.

Observe that $\{N_\epsilon\}_{\epsilon\in\{-1,1\}^{\ell+1}}$ is the collection of the connected components of $Y$. As $\pi(N_\epsilon)=N$ and using the diagram above, we deduce
$$
(\pi_2\circ\pi)(N_{\epsilon})=\pi_2(N)=(f\circ\pi_1)(N)=f(M)=M.
$$
In addition, each $N_\epsilon$ is Nash diffeomorphic to $M$ and for $\epsilon\neq\epsilon'$ the linear involution 
$$
\phi_{\epsilon,\epsilon'}:\R^m\times\R^\ell\times\R\to\R^m\times\R^\ell\times\R,\ (x,y,t)\mapsto(x,(\epsilon\cdot\epsilon')\cdot(y,t))
$$
induces an involution of $Y$ such that $\phi_{\epsilon,\epsilon'}(N_\epsilon)=N_{\epsilon'}$. We have denoted
$$
(\epsilon\cdot\epsilon')\cdot(y,t):=(\epsilon_1\epsilon_1'y_1,\ldots,\epsilon_\ell\epsilon_\ell'y_\ell,\epsilon_{\ell+1}\epsilon_{\ell+1}'t).
$$
Let us check: {\em $Y$ is a non-singular real algebraic set}. 

Pick a point $(z,y,t)\in Y$. As $z$ is a non-singular point of the real algebraic set $Z$ of dimension $d$, there exists by the Jacobian criterion \cite[Prop.3.3.10]{bcr} polynomials $f_1,\ldots,f_{m-d}\in{\mathcal I}(Z)$ and an open semialgebraic set $U\subset\R^m$ of $z$ such that 
$$
Z\cap U=\{u\in U:\ f_1(u)=0,\ldots,f_{m-d}(u)=0\}
$$
and the rank of the Jacobian matrix $(\frac{\partial f_i}{\partial\z_k}(z))_{1\leq i\leq m-d,1\leq k\leq m}$ is $m-d$. Define $f_{m+j-d}(\z,\y_j):=\y_j^4-Q_j(\z)$ for $j=1,\ldots,\ell$ and $f_{m+\ell+1-d}(\z,\y,\t):=\big(\sum_{j=1}^\ell P_j(\z)\y_j^2\big)\t^2-1$ and denote Kroneker's $\delta$ for $1\leq j,p\leq\ell$ with $\delta_{jp}$. We have
$$
Y\cap(U\times\R^\ell\times\R)=\{(u,v,w)\in U\times\R^\ell\times\R:\ f_q(u,v,w)=0,\ q=1,\ldots,m+\ell+1-d\}
$$
and the Jacobian matrix of $f:=(f_1,\ldots,f_{m+\ell+1-d})$ at $(z,y,t)$ is
\begin{equation*}
J_f(z,y,t)=\left[\begin{array}{c|c|c}
\big(\frac{\partial f_i}{\partial\z_k}(z)\big)_{\substack{1\leq i\leq m-d,\\1\leq k\leq m}}&0&0\\\hline
\big(\frac{\partial f_i}{\partial\z_k}(z,y))_{\substack{m+1-d\leq i\leq m+\ell-d,\\1\leq k\leq m}}&(4y_j^3\delta_{jp})_{1\leq j,p\leq\ell}&0\\\hline
\big(\frac{\partial f_{m+\ell+1-d}}{\partial\z_k}(z,y,t)\big)_{1\leq k\leq m}&\big(\frac{\partial f_{m+\ell+1-d}}{\partial\y_j}(z,y,t)\big)_{1\leq j\leq\ell}&2t\sum_{j=1}^\ell P_j(z)y_j^2
\end{array}\right]
\end{equation*}
and has rank $m+\ell+1-d$ (because $y_1,\ldots,y_\ell\neq0$ and $t=\pm h(z)\neq0$). By the Jacobian criterion \cite[Prop.3.3.10]{bcr} $(z,y,t)$ is a non-singular point of the real algebraic set $Y$. Consequently, $Y$ is a non-singular real algebraic set.

Let $X$ be the irreducible component of $Y$ that contains $N_{(1,\ldots,1)}$, which has at most two connected components if $M$ is compact. Then $k:=m+\ell+1-n$ and the non-singular real algebraic set $X$ satisfies the requirements in the statement.
\end{proof}

\section{Folding Nash manifolds to build Nash manifolds with corners}\label{s3}

In \cite[Thm.1.12, Cor.6.5]{fgr} it is shown that a Nash manifold $\Qq\subset\R^n$ with corners is contained as a closed subset in a Nash manifold $M\subset\R^n$ of the same dimension and the Nash closure of its boundary $\partial\Qq$ is a Nash normal-crossings divisor of $M$. In this section we show that the Nash manifold $M$ can be `folded' to build the Nash manifold with corners $\Qq$, that is, there exists a surjective Nash map $f:M\to\Qq$ such that its restriction to $\Qq$ is a semialgebraic homeomorphism close to the identity map, its restriction to $\Int(\Qq)$ is a Nash diffeomorphism close to the identity map, and $f$ preserves the natural semialgebraic partition of the boundary $\partial\Qq$ (Theorem \ref{fold}). We use standard tools for smooth manifolds with boundary adapted to the Nash category (equations of the boundary, collars and doubling of smooth manifolds with boundary, etc.). These standard constructions in the Nash setting were mainly developed in \cite[Ch.VI]{sh} (compact case) and in \cite[\S4]{fe3} (general case). However, we need to adapt (taking profit of the results introduced in Section \ref{s2}) some of the statements and their proofs to obtain constructions compatible with an assigned Nash normal-crossings divisor. We also show that our construction is canonical and does not depend on the order chosen to `fold $M$ along the facets of $\Qq$' (see Section \ref{s4}).

\subsection{Folding one component of the boundary.}

Let $\Hh$ be a $d$-dimensional Nash manifold with boundary, let $M\subset\R^n$ be a $d$-dimensional Nash manifold that contains $\Hh$ as a closed subset and assume that the boundary $\partial\Hh$ of $\Hh$ is a Nash subset of $M$. Let $Y$ be a Nash normal-crossings divisor of $M$ such that the Nash subset $\partial\Hh$ of $M$ is a union of irreducible components of $Y$ (maybe after shrinking $M$ if necessary). Let $Y_1,\ldots,Y_r$ be the irreducible components of $Y$ that meet $\partial\Hh$ but are not contained in $\partial\Hh$ and let $Y_{r+1},\ldots,Y_s$ be the irreducible components of $Y$ that do not meet $\partial\Hh$. Let $h:M\to\R$ be a Nash equation of $\partial\Hh$. Observe that $h_i:=h|_{Y_i}$ is a Nash equation of $Y_i\cap\partial\Hh$ such that $Y_i\cap\Int(\Hh)=\{h_i>0\}$ and $d_xh_i:T_x Y_i\to\R$ is surjective for all $x\in Y_i\cap\partial\Hh$. Thus, $Y_i\cap\Hh$ is a Nash manifold with boundary $Y_i\cap\partial\Hh$ that is contained in $Y_i$ as a closed subset. In addition,
\begin{equation}\label{yi}
D(Y_i\cap\Hh)=\{(x,t)\in (Y_i\cap\Hh)\times\R:\, t^2-h_i(x)=0\}=(Y_i\times\R)\cap D(\Hh),
\end{equation}
is the Nash double of $Y_i\cap\Hh$. We show that there exists a Nash embedding of $M$ into $D(\Hh)$ that maps $Y_i$ into $D(Y_i\cap\Hh)=(Y_i\times\R)\cap D(\Hh)$ for $i=1,\ldots,r$ and maps $Y_j$ onto $(Y_j\times\R)\cap D(\Hh)\cap\{t>0\}$ for $j=r+1,\ldots,s$. We begin with the following technical lemma. 

\begin{lem}\label{fold1}
Let $k\geq1$ and $0<a\leq 1$. Consider the ${\mathcal S}^{2k}$ function $f_{a,k}:=(1-(\s/a)^{2k})^{2k}\s+(1-(1-(\s/a)^{2k})^{2k})\sqrt{\s}$. Then: 
\begin{itemize}
\item $f_{a,k}$ is positive semidefinite and strictly increasing on $[0,a]$. 
\item The Taylor polynomial of $f_{a,k}$ at $s=0$ of degree $2k$ is $\s$.
\item The Taylor polynomials of $f_{a,k}$ and $\sqrt{\s}$ of degree $2k-1$ coincide at $s=a$, 
\item $f_{a,k}(s)\leq\sqrt{s}$ on $[0,1]$.
\end{itemize}
\end{lem}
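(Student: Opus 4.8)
The plan is to analyze the ${\mathcal S}^{2k}$ function $f_{a,k}(\s):=\lambda(\s)\s+(1-\lambda(\s))\sqrt{\s}$ where I abbreviate $\lambda(\s):=(1-(\s/a)^{2k})^{2k}$; it is the convex combination of the identity $\s$ and the square root $\sqrt{\s}$ with weight $\lambda$, which equals $1$ to high order at $\s=0$ and vanishes to high order at $\s=a$. Three of the four assertions are essentially bookkeeping with Taylor expansions, so I would dispatch them first. The \emph{Taylor polynomial at $\s=0$}: since $\lambda(\s)=1+O(\s^{2k})$ and $1-\lambda(\s)=O(\s^{2k})$, while $\sqrt{\s}=O(\s^{1/2})$ is bounded near $0$, the product $(1-\lambda(\s))\sqrt{\s}$ is $O(\s^{2k})$ (indeed $O(\s^{2k+1/2})$), so $f_{a,k}(\s)=\s+O(\s^{2k})$ and the degree-$2k$ Taylor polynomial is exactly $\s$. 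The \emph{Taylor polynomial at $\s=a$}: here $\lambda(\s)$ has a zero of order $2k$ at $\s=a$ because $1-(\s/a)^{2k}$ vanishes simply at $\s=a$ and is then raised to the power $2k$; hence $f_{a,k}(\s)-\sqrt{\s}=\lambda(\s)(\s-\sqrt{\s})$ vanishes to order $2k$ at $\s=a$ (as $\s-\sqrt{\s}$ is smooth near $\s=a>0$), so the degree-$(2k-1)$ Taylor polynomials of $f_{a,k}$ and $\sqrt{\s}$ agree at $\s=a$.

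For the \emph{inequality $f_{a,k}(\s)\le\sqrt{\s}$ on $[0,1]$}: write $f_{a,k}(\s)-\sqrt{\s}=\lambda(\s)\bigl(\s-\sqrt{\s}\bigr)$. On $[0,1]$ we have $\s-\sqrt\s=\sqrt\s(\sqrt\s-1)\le0$, and $\lambda(\s)\ge0$ whenever it is defined — but one must be careful: for $\s\in(a,1]$ the factor $1-(\s/a)^{2k}$ is negative, and raising a negative number to the even power $2k$ still gives a nonnegative value, so $\lambda(\s)=(1-(\s/a)^{2k})^{2k}\ge0$ on all of $[0,1]$. Hence $f_{a,k}(\s)-\sqrt\s=\lambda(\s)(\s-\sqrt\s)\le0$ on $[0,1]$, with the understanding (to be stated) that the relevant domain for $f_{a,k}$ is really $[0,a]$; the inequality on all of $[0,1]$ is recorded for later convenience where $f_{a,k}$ is extended appropriately.

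The main obstacle is the \emph{positive semidefiniteness and strict monotonicity on $[0,a]$}. On $[0,a]$ both $\lambda(\s)\in[0,1]$ and $1-\lambda(\s)\in[0,1]$ and both of $\s,\sqrt\s\ge0$, so $f_{a,k}(\s)\ge0$ is immediate; the real work is $f_{a,k}'(\s)>0$ on $(0,a)$ (plus continuity to get monotonicity on the closed interval, and noting $f_{a,k}(0)=0$, so ``positive semidefinite'' should read ``nonnegative, vanishing only at $0$''). I would compute
\[
f_{a,k}'(\s)=\lambda'(\s)\bigl(\s-\sqrt\s\bigr)+\lambda(\s)\Bigl(1-\tfrac{1}{2\sqrt\s}\Bigr)+\tfrac{1}{2\sqrt\s}.
\]
Rearranging, $f_{a,k}'(\s)=\lambda(\s)+\tfrac{1-\lambda(\s)}{2\sqrt\s}+\lambda'(\s)(\s-\sqrt\s)$. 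The first two terms are nonnegative and the second is strictly positive for $\s\in(0,a)$ since $\lambda(\s)<1$ there; on $[0,a]$ one checks $\lambda'(\s)=2k(1-(\s/a)^{2k})^{2k-1}\cdot(-2k\s^{2k-1}/a^{2k})\le0$ and $\s-\sqrt\s\le0$, so the last term $\lambda'(\s)(\s-\sqrt\s)\ge0$ as well. Thus every summand is $\ge0$ and the middle one is $>0$ on $(0,a)$, giving $f_{a,k}'(\s)>0$ there; near $\s=0$ one uses the Taylor expansion $f_{a,k}(\s)=\s+O(\s^{2k})$ (with $2k\ge2$) to see $f_{a,k}$ is strictly increasing through $0$. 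Hence $f_{a,k}$ is strictly increasing on $[0,a]$, with $f_{a,k}(0)=0$, so in particular it is positive on $(0,a]$.
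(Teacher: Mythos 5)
Your proof is correct and follows essentially the same route as the paper's: both view $f_{a,k}$ as the combination $\lambda\s+(1-\lambda)\sqrt{\s}$ with $\lambda:=(1-(\s/a)^{2k})^{2k}$, read off the two Taylor statements from the order of vanishing of $1-\lambda$ at $0$ and of $\lambda$ at $a$, deduce $f_{a,k}-\sqrt{\s}=\lambda(\s-\sqrt{\s})\le0$ on $[0,1]$, and establish strict monotonicity from the same derivative formula by checking each summand is nonnegative with one strictly positive on $(0,a)$. The only (immaterial) difference is which term supplies the strict positivity of $f_{a,k}'$: you use $(1-\lambda)/(2\sqrt{\s})>0$ via $\lambda<1$ on $(0,a)$, while the paper can equally rely on the $\lambda'(\s-\sqrt{\s})$ term.
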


\begin{center}
\begin{figure}[ht]
\begin{minipage}{0.48\textwidth}
\begin{center}
{\scriptsize\begin{tikzpicture}[scale=5]
\draw[->] (-0.05,0)--(0.6,0);
\draw[->](0,-0.05)--(0,0.75);
\draw [domain=0:0.54,red,smooth,line width=0.75pt,samples=500] plot({\x}, {\x*(1-16*\x*\x*\x*\x)*(1-16*\x*\x*\x*\x)*(1-16*\x*\x*\x*\x)*(1-16*\x*\x*\x*\x)+sqrt(\x)*(1-(1-16*\x*\x*\x*\x)*(1-16*\x*\x*\x*\x)*(1-16*\x*\x*\x*\x)*(1-16*\x*\x*\x*\x))});
\draw[densely dotted] (0.5,0)--(0.5,0.70710678);
\draw (0.5,0.70710678) node {\tiny{$\bullet$}};
\draw (0.5,-0.05) node {\small $0.5$};
\end{tikzpicture}}
\end{center}
\end{minipage}
\hfil
\begin{minipage}{0.48\textwidth}
\begin{center}
{\scriptsize\begin{tikzpicture}[scale=5]
\draw[->] (-0.05,0)--(0.6,0);
\draw[->](0,-0.05)--(0,0.75);
\draw [domain=0:0.54,blue, smooth,line width=0.75pt] plot({\x}, {\x*(1-16*\x*\x*\x*\x)*(1-16*\x*\x*\x*\x)*(1-16*\x*\x*\x*\x)*(1-16*\x*\x*\x*\x)*(1-16*\x*\x*\x*\x)*
(1-16*\x*\x*\x*\x)*(1-16*\x*\x*\x*\x)*(1-16*\x*\x*\x*\x)*(1-16*\x*\x*\x*\x)*(1-16*\x*\x*\x*\x)*(1-16*\x*\x*\x*\x)*(1-16*\x*\x*\x*\x)
+sqrt(\x)*(1-(1-16*\x*\x*\x*\x)*(1-16*\x*\x*\x*\x)*(1-16*\x*\x*\x*\x)*(1-16*\x*\x*\x*\x)*(1-16*\x*\x*\x*\x)*(1-16*\x*\x*\x*\x)
*(1-16*\x*\x*\x*\x)*(1-16*\x*\x*\x*\x)*(1-16*\x*\x*\x*\x)*(1-16*\x*\x*\x*\x)*(1-16*\x*\x*\x*\x)*(1-16*\x*\x*\x*\x))});
\draw[densely dotted] (0.5,0)--(0.5,0.70710678);
\draw (0.5,0.70710678) node {\tiny{$\bullet$}};
\draw (0.5,-0.05) node {\small $0.5$};
\end{tikzpicture}}
\end{center}
\end{minipage}
\caption{{\small Graphs of $f_a$ for $a=\frac{1}{2}$ and $k=2$ (left) and $k=6$ (right).}}
\end{figure}
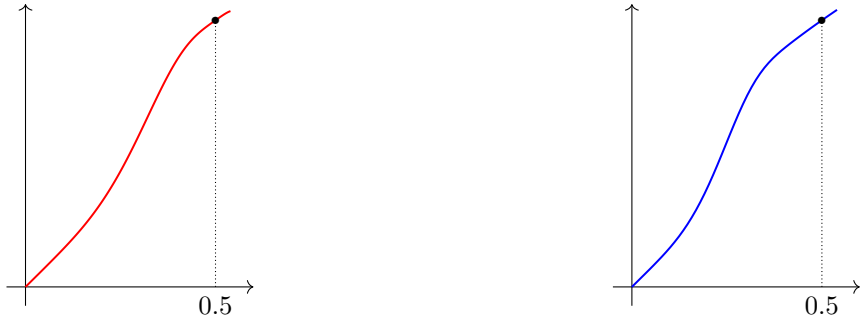
\end{center}

\begin{proof}
Write $f:=f_{a,k}$ to lighten notations. Using Newton's binomial, we have
$$
f=\sum_{\ell=0}^{2k}\binom{2k}\ell(-1)^\ell\Big(\frac{\s}{a}\Big)^{2k\ell}\s-\sum_{\ell=1}^{2k}\binom{2k}\ell(-1)^\ell\Big(\frac{\s}{a}\Big)^{2k\ell}\sqrt{\s},
$$
so $f$ is an ${\mathcal S}^{2k}$ function and the Taylor polynomial of $f$ at $s=0$ of degree $2k$ is $\s$. In addition,
$$
f=(1-(1+(\s/a)-1)^{2k})^{2k}\s+(1-(1-(1+(\s/a)-1)^{2k})^{2k})\sqrt{\s}
$$ 
and using Newton's binomial, we have
$$
f=\sqrt{\s}-\Big(\frac{\s}{a}-1\Big)^{2k}\Big(\sum_{\ell=1}^{2k}\binom{2k}\ell\Big(\frac{\s}{a}-1\Big)^{\ell-1}\Big)^{2k}(\s-\sqrt{\s}),
$$
so the Taylor polynomials of $f$ and $\sqrt{\s}$ of degree $2k-1$ at $s=a$ coincide.

Define $\sigma:=(1-(\s/a)^{2k})^{2k}$ and observe that $\sigma(0)=1$, $\sigma(a)=0$ and $\sigma([0,a])\subset[0,1]$, so both $\sigma$, $1-\sigma$ are positive semidefinite on $[0,a]$. Thus, $f=\sigma\s+(1-\sigma)\sqrt{\s}$ is positive semidefinite on $[0,a]$. As $0<a\leq 1$, it holds $\sqrt{s}-s>0$ for each $s\in(0,a)$, so the derivative
\begin{multline*}
f':=\Big(\s-\sqrt{\s}\Big)\sigma'+(1-\sigma)\frac{1}{2\sqrt{\s}}+\sigma\\
=(\sqrt{\s}-\s)4k^2(1-(\s/a)^{2k})^{2k-1}(\s/a)^{2k-1}(1/a)+(1-\sigma)\frac{1}{2\sqrt{\s}}+\sigma
\end{multline*}
is strictly positive on $(0,a)$ and $f$ is strictly increasing on $[0,a]$. Finally, as $s\leq\sqrt{s}$ on $[0,1]$, we deduce that $f(s)=\sigma(s)s+(1-\sigma(s))\sqrt{s}\leq\sqrt{s}$ for each $s\in[0,1]$, as required.
\end{proof}

We are ready to prove the embedding theorem. 

\begin{thm}[Nash embedding into the Nash double]\label{embedding}
Let $\Hh\subset\R^n$ be a $d$-dimensional Nash manifold with boundary that is a closed (semialgebraic) subset of $\R^n$, let $M\subset\R^n$ be a $d$-dimensional Nash manifold that contains $\Hh$ as a closed subset and assume that the boundary $\partial\Hh$ of $\Hh$ is a Nash subset of $M$. Let $Y$ be a Nash normal-crossings divisor of $M$ such that (after shrinking $M$ if necessary) $\partial\Hh$ in $M$ is a union of irreducible components of $Y$. Let $Y_1,\ldots,Y_r$ be the irreducible components of $Y$ that meet $\partial\Hh$ but are not contained in $\partial\Hh$ and let $Y_{r+1},\ldots,Y_s$ be the irreducible components of $Y$ that do not meet $\partial\Hh$. Let $D(\Hh)\subset\Hh\times\R$ be the Nash double of $\Hh$ and $\pi:D(\Hh)\to\Hh$ the restriction to $D(\Hh)$ of the projection $\Hh\times\R\to\Hh$ onto the first factor.
After shrinking $M$ once more if necessary, there exists a Nash embedding $\phi:M\to D(\Hh)$ that maps $\Hh$ onto $\Hh_+$ and satisfies:
\begin{itemize}
\item $\phi(Y_i)\subset D(Y_i\cap\Hh)$ for each $i=1,\ldots,r$, 
\item $\phi(Y_j)=(Y_j\times\R)\cap\Int(\Hh_+)$ for each $j=r+1,\ldots,s$, 
\item $\phi|_{\partial\Hh}=(\id_{\partial\Hh},0)$ and $\phi|_{\Hh}$ is close to $(\pi|_{\Hh_+})^{-1}$ with respect to the ${\mathcal S}^0$ topology,
\item There exist and open semialgebraic neighborhood $U$ of $\partial\Hh$ and a Nash retraction $\rho:U\to\partial\Hh$ compatible with $Y$ such that $M\setminus\Hh\subset U$ and $\phi|_{M\setminus\Hh}$ is close to $(\rho|_{M\setminus\Hh},0)$ with respect to the ${\mathcal S}^0$ topology.
\item There exists an open semialgebraic neighborhood $W\subset U$ of $\partial\Hh$ such that $\cl(W)\subset M$ and $\phi|_{\cl(W)\cup\Hh}:\cl(W)\cup\Hh\to D(\Hh)$ is proper.
\end{itemize}
\end{thm}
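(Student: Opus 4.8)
The plan is to produce the embedding first as an ${\mathcal S}^\mu$ map, with an explicit local model near $\partial\Hh$ built from the interpolating function of Lemma \ref{fold1}, and then Nash‑approximate it while keeping the incidences with the divisor $Y$.

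\emph{Normalisation.} First I would shrink $M$ so that, as in Corollary \ref{Q} and Lemma \ref{equation}, $\Hh=\{h\ge0\}$ for a Nash equation $h$ of $\partial\Hh$ with $\Int(\Hh)=\{h>0\}$ and $d_xh\colon T_xM\to\R$ surjective for $x\in\partial\Hh$. Combining Proposition \ref{compretraction} over the (pairwise disjoint) irreducible components of $\partial\Hh$ gives an open neighbourhood $U\supset\partial\Hh$ with a Nash retraction $\rho\colon U\to\partial\Hh$ compatible with $Y$, and Lemma \ref{style} then produces a compatible Nash collar $\varphi=(\rho,h/(\veps\circ\rho))\colon V\to\partial\Hh\times(-1,1)$ on some $V\subset U$ with $\varphi(Y_i\cap V)=(Y_i\cap\partial\Hh)\times(-1,1)$ for $i=1,\dots,r$. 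Using Lemma \ref{co} I would fix an open $W\supset\partial\Hh$ with $\cl(W)\subset M$ inside $V$, and then shrink $M$ once more so that $V$ is disjoint from $Y_{r+1},\dots,Y_s$ (which lie in $\Int(\Hh)$, being components of $Y$ not meeting $\partial\Hh$) while still $M\setminus\Hh$ is contained in a thin one‑sided sub‑collar $\varphi^{-1}(\partial\Hh\times(-a_1,0))$ for a sufficiently small positive Nash function $a_1$. Write $\iota:=\varphi^{-1}$.

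\emph{The ${\mathcal S}^\mu$ model and its verification.} Fix a large $k$ and a small positive Nash function $a_0\le 1$ on $\partial\Hh$, and let $G$ be the (odd in the collar variable, base‑point dependent) function equal to $f_{a_0,k}$ on $[0,a_0]$ and to the square root on $[a_0,1)$; by the Taylor‑polynomial statements of Lemma \ref{fold1} (fit with $\sqrt{\ }$ up to order $2k-1$ at $a_0$, Taylor polynomial $\s$ at $0$) this $G$ is strictly increasing, vanishes at $0$, and is of class ${\mathcal S}^\mu$ with $\mu:=2k-1$. Define $\phi_0\colon M\to\Hh\times\R$ by $\phi_0(x)=(x,\sqrt{h(x)})$ on $M\setminus V$ and, for $x\in V$ with $s:=h(x)/\veps(\rho(x))$, by
$$
\phi_0(x):=\Big(\iota\big(\rho(x),G(s)^2\big),\ \sqrt{\veps(\rho(x))}\;G(s)\Big);
$$
on the overlap $G(s)=\sqrt s$ makes the two formulas agree, $\phi_0$ is a well‑defined ${\mathcal S}^\mu$ map, and the identity $t^2=h$ holds identically, so $\phi_0(M)\subset D(\Hh)$. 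I would then check directly: $\phi_0|_{\partial\Hh}=(\id_{\partial\Hh},0)$ since $G(0)=0$; $\phi_0(\Hh)=\Hh_+$ and $\phi_0$ is an ${\mathcal S}^\mu$ embedding (in the chart of Lemma \ref{double} near $\partial\Hh\times\{0\}$ it reads $(y,s)\mapsto(y,\sqrt{\veps(y)}\,G(s))$ with $G$ strictly monotone and $G'(0)=1$, which also produces the model $(y_1,\dots,y_d)\mapsto(y_1^2,\dots)$ for $\pi\circ\phi_0$); $\phi_0(Y_i)\subset D(Y_i\cap\Hh)=(Y_i\times\R)\cap D(\Hh)$ for $i\le r$ by \eqref{yi} together with $\varphi(Y_i\cap V)=(Y_i\cap\partial\Hh)\times(-1,1)$; $\phi_0(Y_j)=(Y_j\times\R)\cap\Int(\Hh_+)$ for $j\ge r+1$ because $Y_j\subset\Int(\Hh)\setminus V$, where $\phi_0(x)=(x,\sqrt{h(x)})$; $\phi_0|_{\cl(W)\cup\Hh}$ is proper; and, for $a_0,a_1$ small enough, $\phi_0|_\Hh$ is ${\mathcal S}^0$‑close to $(\pi|_{\Hh_+})^{-1}$ while $\phi_0|_{M\setminus\Hh}$ is ${\mathcal S}^0$‑close to $(\rho|_{M\setminus\Hh},0)$.

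\emph{Nash approximation, conclusion, and the hard point.} The non‑Nash locus of $\phi_0$ is confined to $V$ (in fact to $\{|s|<a_0\}$); off it $\phi_0$ is the Nash map $(x,\sqrt{h(x)})$, which I would leave unchanged. On $V$ I would Nash‑approximate $\phi_0$ relative to a neighbourhood of the frontier of $V$ and relative to $\partial\Hh$, and post‑compose with a Nash retraction of a tubular neighbourhood of $D(\Hh)$ in $\R^{n+1}$ so as to land exactly in $D(\Hh)$, obtaining a Nash map $\phi$ that is ${\mathcal S}^1$‑close to $\phi_0$. Being an embedding and being proper on $\cl(W)\cup\Hh$ are open conditions and so pass to $\phi$; the two ${\mathcal S}^0$‑proximities pass to $\phi$ as well; $\phi|_{\partial\Hh}=(\id,0)$ and $\phi(Y_j)=(Y_j\times\R)\cap\Int(\Hh_+)$ for $j\ge r+1$ hold because $\phi$ was not altered on $\partial\Hh$ nor on $Y_j\subset M\setminus V$; and $\phi(\Hh)=\Hh_+$ follows since $\phi|_\Hh$ is a proper injection into $\Hh_+$ that is ${\mathcal S}^0$‑close to the homeomorphism $(\pi|_{\Hh_+})^{-1}$, hence onto. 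The two places that need genuine care are: (i) that the glued function $G$, and thus $\phi_0$, is really of class ${\mathcal S}^\mu$ across $\partial\Hh$ — precisely what the Taylor‑polynomial identities of Lemma \ref{fold1} are for; and (ii), the main obstacle, performing the Nash approximation on $V$ so that the non‑open incidences $\phi(Y_i)\subset D(Y_i\cap\Hh)$ for $i\le r$ are preserved: reducing to approximating the single building block $(y,\eta)\mapsto(\iota(y,G(\eta/\veps(y))^2),\sqrt{\veps(y)}\,G(\eta/\veps(y)))$ on $\partial\Hh\times\R$ by a Nash map carrying $(Y_i\cap\partial\Hh)\times\R$ into the Nash submanifold $D(Y_i\cap\Hh)$, one processes the strata $\Sing_\ell(Y)\cap\partial\Hh$ from the deepest outward using the compatible Nash retractions of Proposition \ref{compretraction}, exactly as in \cite{fgh,bfr}; this stratified, retraction‑compatible scheme is the technical heart of the argument.
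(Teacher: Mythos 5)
Your proposal follows essentially the same route as the paper's proof: a compatible Nash retraction and collar (Proposition \ref{compretraction}, Lemma \ref{style}), an explicit ${\mathcal S}^{2k-1}$ embedding into $D(\Hh)$ built by gluing the interpolating function of Lemma \ref{fold1} to $\sqrt{\s}$ in the collar, verification of all the incidences and proximities for this model, and then a Nash approximation that preserves the incidences with the normal-crossings divisor by working stratum-by-stratum (the paper invokes \cite[Thm.1.6, Thm.1.7, Prop.8.2]{bfr} and \cite[Lem.II.1.7]{sh} for exactly this step). The only differences are presentational (you write the local model in normalized collar coordinates, the paper in unnormalized ones), and you correctly identify the same technical crux.
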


\begin{proof}
The proof is conducted in several steps:

\noindent{\sc Step 1.} We first construct suitable semialgebraic neighborhoods $U\subset M$ of $\partial\Hh$ and $V\subset D(\Hh)$ of $\partial\Hh\times\{0\}$. 

As $\partial\Hh$ is a Nash submanifold of $M$, its connected components coincide with its Nash irreducible components. In particular, they are pairwise disjoint Nash irreducible components $Y_{s+1},\ldots,Y_p$ of $Y$. By Proposition \ref{compretraction} (applied separately to each $Y_{s+1},\ldots,Y_p$) we find an open semialgebraic neighborhood $U\subset M$ of $\partial\Hh$ equipped with a Nash retraction $\rho :U\to\partial\Hh$ compatible with $Y$. We may assume that $U$ does not meet the irreducible components of $Y$ that do not meet $\partial\Hh$. Let $h:M\to\R$ be a Nash equation of $\partial\Hh$ such that $\Int(\Hh)=\{h>0\}$ and $d_xh:T_x\Hh\to\R$ is surjective for all $x\in\partial\Hh$. Define the Nash map $\varphi:=(\rho, h):U\to\partial\Hh\times\R$. By Lemma \ref{style} there exists, after shrinking $U$ if necessary, a strictly positive Nash function $\varepsilon:\partial\Hh\to (0,1)$ such that 
\begin{align*}
\varphi(U)&=\{(y,s)\in\partial\Hh\times\R:\ |s|<\varepsilon(y)\},\\
\varphi(U\cap\Hh)&=\{(y,s)\in\partial\Hh\times\R:\, 0\leq s<\varepsilon(y)\},
\end{align*}
and $\varphi:U\to\varphi(U)$ is a Nash diffeomorphism such that
$$
\varphi(Z\cap U)=\{(y,s)\in (Z\cap\partial\Hh)\times\R:\, |s|<\varepsilon(y)\}
$$
for each irreducible component $Z$ of $\Sing_\ell(Y)$ such that $Z\cap\partial\Hh\neq\varnothing$, $Z\not\subset\partial\Hh$ and $\ell\geq0$. Define $V:=\pi^{-1}(U\cap\Hh)=\{(x,t)\in(U\cap\Hh)\times\R:\ t^2-h(x)=0\}$.

\noindent{\sc Step 2.} Let $V':=\{(y,t)\in\partial\Hh\times\R:\, |t|<\sqrt{\varepsilon(y)}\}$. We want to prove: {\em The Nash map
$$
\psi:V\to\partial\Hh\times\R,\ (x,t)\mapsto (\rho(x),t)
$$
is a Nash diffeomorphism onto its image $V'$, such that 
$$
\psi(Z\cap U\cap\Hh)=\{(y,t)\in (Z\cap\partial\Hh)\times\R:\, 0\leq t<\sqrt{\varepsilon(y)}\}
$$
for each irreducible component $Z$ of $\Sing_\ell(Y)$ such that $Z\cap\partial\Hh\neq\varnothing$, $Z\not\subset\partial\Hh$ and $\ell\geq0$. }

(1) {\em $\psi$ is injective.} If $(x_1,t_1),(x_2,t_2)\in V$ satisfy $\psi(x_1,t_1)=\psi(x_2,t_2)$, then $\rho(x_1)=\rho(x_2)$ and $h(x_1)=t_1^2=t_2^2=h(x_2)$. Then we have $\varphi(x_1)=\varphi(x_2)$, so $x_1=x_2$. Thus, $(x_1,t_1)=(x_2,t_2)$. 

(2) $\psi(V)=V'$. Fix a point $(x,t)\in V$. Then, $x=\pi(x,t)\in U$ and $\varphi(x)=(\rho(x),h(x))\in\varphi(U)$, so $t^2=h(x)<\veps(\rho(x))$ and $\psi(x,t)\in V'$. Conversely, fix a point $(y,t)\in V'$. As $(y,t^2)\in\varphi(U)$, there exists a point $x\in U$ such that $\varphi(x)=(\rho(x),h(x))=(y,t^2)$. As $x\in U$ and $h(x)=t^2\geq0$, we have $x\in U\cap\Hh$, so $(x,t)\in V$ and $(y,t)=(\rho(x),t)=\psi(x,t)\in\psi(V)$. 

(3) {\em The differential $d_z\psi:T_zD(\Hh)\to T_{\rho(z)}\Hh\times\R$ is an isomorphism for each $z\in V$.} Write $z:=(x,t)$ and notice that
$$
T_zD(\Hh)=\{(v,r)\in T_x\Hh\times\R:\, d_xh(v)-2tr=0\}
$$
and $d_z\psi(v,r)=(d_x\rho(v),r)$. If $t\neq 0$, then $
d_z\psi(v,r)=(d_x\rho(v),\frac{1}{2t}d_xh(v))$. As $d_x\varphi=(d_x\rho,d_xh)$ is an isomorphism, also $d_z\psi$ is an isomorphism. If $t=0$, that is, $z=(x,0)\in\partial\Hh\times\R$, then 
$$
T_zD(\Hh)=\{(v,r)\in T_x\Hh\times\R:\, d_xh(v)=0\}=T_x\partial\Hh\times\R
$$
and $d_z\psi(v,r)=(v,r)$ because $\rho|_{\partial\Hh}=\id_{\partial\Hh}$. Hence, $d_z\psi$ is again an isomorphism. 

(4) {\em Let $Z$ be a Nash irreducible component of $\Sing_\ell(Y)$ such that $Z\cap\partial\Hh\neq\varnothing$, $Z\not\subset\partial\Hh$ and $\ell\geq0$. Then
$$
\psi(Z\cap U\cap\Hh)=\{(y,t)\in (Z\cap\partial\Hh)\times\R:\, 0\leq t<\sqrt{\varepsilon(y)}\}.
$$}
By Proposition \ref{compretraction} we have $\rho(Z\cap U)=Z\cap\partial\Hh$. Thus, 
$$
\psi(Z\cap U\cap\Hh)\subset\{(y,t)\in (Z\cap\partial\Hh)\times\R:\, 0\leq t<\sqrt{\varepsilon(y)}\}.
$$
To prove that the previous inclusion is in fact an equality, it is enough to proceed similarly to the end of the proof of Lemma \ref{style}.

\noindent{\sc Step 3.} Let $a:\partial\Hh\to\R$ be a strictly positive Nash function such that $a<\varepsilon$. Recall that $\Hh_\epsilon=D(\Hh)\cap\{\epsilon t\geq0\}$. For $\epsilon=\pm$, define
\begin{align*}
&\Hh^{\bullet}:=\Hh\setminus\varphi^{-1}\Big(\Big\{(y,s)\in\partial\Hh\times\R:\, |s|<\frac{a(y)}{4}\Big\}\Big)\supset\Hh\setminus U,\\
&\Hh^{\bullet}_{\epsilon}:=\Hh_{\epsilon}\setminus\psi^{-1}\Big(\Big\{(y,s)\in\partial\Hh\times\R:\, |s|<\frac{\sqrt{a(y)}}{2}\Big\}\Big)\supset \Hh_{\epsilon}\setminus V.
\end{align*}
We claim: {\em The restriction $\varpi_{a\epsilon}:=\pi|_{\Hh^{\bullet}_{\epsilon}}:\Hh^{\bullet}_{\epsilon}\to\Hh^{\bullet},\ (x,t)\mapsto x$ is a Nash diffeomorphism, whose inverse map is $\varpi_{a\epsilon}^{-1}:\Hh^{\bullet}\to\Hh^{\bullet}_{\epsilon},\ x\mapsto(x,\epsilon\sqrt{h(x)})$ for $\epsilon=\pm$.}

As $\varpi_{a\epsilon}$ is injective, let us check: {\em it is surjective}. 

Let $x\in\Hh^{\bullet}$. As $x\in\Int(\Hh)$, we have $h(x)>0$ and we write $t:=\epsilon\sqrt{h(x)}$. It holds that $(x,t)\in D(\Hh)$ and $\pi(x,t)=x$. We want to check that $(x,t)\in\Hh^{\bullet}_{\epsilon}$. If $x\not\in U$, then $(x,t)\in \Hh_{\epsilon}\setminus V\subset\Hh^{\bullet}_{\epsilon}$. If $x\in U$, then $\psi(x,t)=(\rho(x),t)$. As $x\in\Hh^{\bullet}$, it holds $
\frac{a(\rho(y))}{4}\leq h(x)$, so $\frac{\sqrt{a(\rho(y))}}{2}\leq\sqrt{h(x)}=\epsilon t$. Consequently, $(x,t)\in\Hh^{\bullet}_{\epsilon}$, so $\varpi_{a\epsilon}$ is surjective. 

In addition, by Lemma \ref{doubleii}(i) $d_z\varpi_{a\epsilon}=d_z\pi$ is an isomorphism for each $z\in\Hh^{\bullet}_{\epsilon}\subset D(\Hh)\cap\{\epsilon t>0\}$. Consequently $\varpi_{a\epsilon}$ is a Nash diffeomorphism, as claimed. 

\noindent{\sc Step 4.} We now construct an ${\mathcal S}^{2k-1}$ embedding $\phi_{a,k}:M\to D(\Hh)$ for $k\geq 1$ arbitrarily large. Substitute $M$ by $\Hh\cup U$ and define
$$
F_{a,k}:\partial\Hh\times\R\to\partial\Hh\times\R,\ (y,s)\mapsto\begin{cases}
(y,s)&\text{if $s<0$,}\\
(y,f_{a(y),k}(s))&\text{if $0\leq s\leq a(y)$,}\\
(y,\sqrt{s})&\text{if $a(y)<s$,}
\end{cases}
$$
where $f_{a(y),k}$ is the ${\mathcal S}^{2k}$ function introduced in Lemma \ref{fold1} and we choose $k$ large enough. Denote 
\begin{align*}
& U':=\varphi(U)=\{(y,s)\in\partial\Hh\times\R:\ |s|<\veps(y)\},\\
& V'=\psi(V)=\{(y,t)\in\partial\Hh\times\R:\, |t|<\sqrt{\varepsilon(y)}\}\text{\ (already introduced in {\sc Step 2})}.
\end{align*}
The open semialgebraic set $F_{a,k}(U')$ is contained in $V'$, because if $-\veps(y)<-s<0$, then $-\sqrt{\veps(y)}<-\sqrt{s}<-s<0$ (recall that $0<\veps(y)<1$) and if $0\leq s\leq a(y)\leq 1$, then $0\leq f_{a(y),k}(s)\leq\sqrt{s}$.

The map $F_{a,k}$ is an ${\mathcal S}^{2k-1}$ diffeomorphism, because $f_{a(y),k}:[0,a(y)]\to[0,\sqrt{a(y)}]$ is by Lemma \ref{fold1} an ${\mathcal S}^{2k-1}$ diffeomorphism such that the Taylor polynomial of degree $2k$ at $s=0$ is $\s$ and its Taylor polynomial of degree $2k-1$ at $s=a(y)$ coincides with the one of $\sqrt{\s}$.

Define
$$
\phi_{a,k}:M\to D(\Hh),\ x\mapsto
\begin{cases}
\varpi_{a+}^{-1}(x)&\text{if $x\in M\setminus U=\Hh\setminus U\subset\Hh^{\bullet}$,}\\
(\psi^{-1}\circ F_{a,k}\circ\varphi)(x)&\text{if $x\in U$.}
\end{cases}
$$
It holds that $\phi_{a,k}$ is an ${\mathcal S}^{2k-1}$ diffeomorphism onto its image $\phi_{a,k}(M)$, whose ${\mathcal S}^{2k-1}$ inverse is
$$
\phi_{a,k}^{-1}:\phi_{a,k}(M)\to M,\ x\mapsto
\begin{cases}
\varpi_{a+}(y)&\text{if $y\in\phi_{a,k}(M)\setminus V\subset\Hh^{\bullet}_+$,}\\
(\varphi^{-1}\circ F_{a,k}^{-1}\circ\psi)(y)&\text{if $y\in V$.}
\end{cases}
$$
In particular, $\phi_{a,k}(M)$ is an open semialgebraic subset of $D(\Hh)$, so it is a Nash manifold. In addition, $\phi_{a,k}$ satisfies:
\begin{itemize}
\item $\phi_{a,k}(\Hh)=\Hh_+$ and $\phi_{a,k}|_{\partial\Hh}=(\id_{\partial\Hh},0)$.
\item $\phi_{a,k}(Y_j)=(Y_j\times\R)\cap\Int(\Hh_+)$ for each $j=r+1,\ldots,s$, because $Y_j\subset\Int(\Hh)$ and $\phi_{a,k}(\Int(\Hh))=\Int(\Hh_+)$.
\item $\phi_{a,k}(M)=\phi_{a,k}(\Int(\Hh))\cup\phi_{a,k}(U)$ is an open semialgebraic subset of $D(\Hh)$, because $\phi_{a,k}(\Int(\Hh))=\Int(\Hh_+)$ is an open semialgebraic subset of $D(\Hh)$, $\varphi|_U:U\to U'$, $F_{a,k}:\partial\Hh\times\R\to\partial\Hh\times\R$ and $\psi|_V:V\to V'$ are ${\mathcal S}^{2k-1}$ diffeomorphisms and $F_{a,k}(U')\subset V'$ is an open semialgebraic set.
\item $\phi_{a,k}(Y_i)\subset D( Y_i\cap\Hh)=(Y_i\times\R)\cap D(\Hh)$. This inclusion follows from the equality \eqref{yi} and because the retraction $\rho$ is compatible with $Y$.
\end{itemize}
It only remains to show that if $a$ is small enough, then $\phi_{a,k}|_{\Hh}$ is close to $(\pi|_{\Hh_+})^{-1}$ and $\phi_{a,k}$ is close to $(\rho,0)$ on $M\setminus\Hh$ (with respect to the ${\mathcal S}^0$ topology).

Let us check: $\xi(y,s):=\psi\circ(\pi|_{\Hh_+})^{-1}\circ\varphi^{-1}(y,s)=(y,\sqrt{s})$. 

Let $x\in U\cap\Hh$ be such that $(y,s)=\varphi(x)=(\rho(x),h(x))$, so $(\pi|_{\Hh_+})^{-1}(x)=(x,\sqrt{h(x)})$ and $\psi(x,\sqrt{h(x)})=(\rho(x),\sqrt{h(x)})=(y,\sqrt{s})$, so $\xi(y,s)=(y,\sqrt{s})$.

Let $\eta:M\to\R$ be a strictly positive continuous semialgebraic function and let us choose $a$ to guarantee that $\|\phi_{a,k}|_{\Hh}-(\pi|_{\Hh_+})^{-1}\|<\eta|_{\Hh}$. In fact, as $\psi,\varphi$ are Nash diffeomorphisms, it is enough to check: \em If $b:\varphi(U)\to\R$ is a strictly positive continuous semialgebraic function, there exists a strictly positive continuous semialgebraic function $a:\partial\Hh\to\R$ such that $a<\veps$ and $\|F_{a,k}|_{\varphi(U\cap\Hh)}-\xi\|<b$\em.

Let $a:\partial\Hh\to\R$ be a continuous semialgebraic function such that $0<a<\tfrac{3}{4}\veps<1$. As $F_{a,k}$ coincides with $\xi$ outside $\{(y,s)\in\partial\Hh\times\R:\ s\in[0,a(y))\}$, we will work on the compact semialgebraic set $K:=\{(y,s)\in\partial\Hh\times\R:\ s\in[0,\frac{3}{4}\veps(y)]\}$. We claim: {\em The projection $\pi_1:K\to\partial\Hh$ is open, proper and surjective}. 

The semialgebraic map $\pi_1$ is surjective, because $\partial\Hh\times\{0\}\subset K$. To prove that $\pi_1$ is open, let $A$ be an open subset of $K$ and let $y_0\in\pi_1(A)$. Let $s_0\in [0,\frac{3}{4}\veps(y_0)]$ be such that $(y_0,s_0)\in A$. As $A$ is open, there exists $\zeta>0$ and $B\subset\partial\Hh$ open such that $y_0\in B$ and $(B\times[s_0-\zeta,s_0+\zeta])\cap K\subset A$. In particular, $(y_0,s)\in A$ and $\pi_1(y_0,s)=y_0$ for each $s\in I:=[\max\{s_0-\zeta,0\},\min\{s_0+\zeta,\frac{3}{4}\veps(y_0)\}]$. Let $s_1\in(0,\frac{3}{4}\veps(y_0))\cap(s_0-\zeta,s_0+\zeta)$ be the midpoint of $I$. Let $\zeta'$ be such that $(s_1-\zeta',s_1+\zeta')\subset(0,\frac{3}{4}\veps(y_0))\cap(s_0-\zeta,s_0+\zeta)$. As $\veps$ is continuous and $s_1+\zeta'<\frac{3}{4}\veps(y_0)$, we may assume (after shrinking $B$ if necessary) that $\frac{3}{4}\veps(B)\subset[s_1+\zeta',+\infty)$. Thus, $B\times (s_1-\zeta',s_1+\zeta')\subset(B\times[s_0-\zeta,s_0+\zeta])\cap K\subset A$, because if $(y,s)\in B\times (s_1-\zeta',s_1+\zeta')$, then $0<s_1-\zeta'<s<s_1+\zeta'\leq\frac{3}{4}(y)$. As $\pi_1(B\times (s_1-\zeta',s_1+\zeta'))=B$ is an open neighborhood of $y_0$ in $\partial\Hh$, we deduce $\pi_1(A)$ is an open subset of $\partial\Hh$, so $\pi_1$ is an open map. 

We show now that $\pi_1$ is proper. Let $C\subset\partial\Hh$ be a compact set and let $\lambda:=\max\{\frac{3}{4}\veps|_C\}$. Then $\pi_1^{-1}(C)\subset C\times [0,\lambda]$, which is a compact set, so $\pi^{-1}(C)$ is also compact. Thus, $\pi_1$ is proper. 

As $\pi_1$ is open, closed and surjective, by \cite[Const.3.1]{fg4} the function 
$$
a:\partial\Hh\to\R,\ y\mapsto\frac{1}{2}\min\Big\{b(y,s)^2:\ s\in\Big[0,\frac{3}{4}\veps(y)\Big]\Big\}
$$ 
is strictly positive, continuous and semialgebraic. Fix $y\in\partial\Hh$. As the last component of $F_{a,k}$ is strictly increasing on $[0,a(y)]$ and $f_a(a(y))=\sqrt{a(y)}$, we have
\begin{align*}
\|F_{a,k}(y,s)&-(y,\sqrt{s})\|\\
&=\begin{cases}
0<b(y,s)&\text{if $a(y)\leq s\leq\frac{3}{4}\veps(y)$,}\\
|f_a(s)-\sqrt{s}|=\sqrt{s}-f_a(s)<\sqrt{s}<\sqrt{a(y)}\leq b(y,s)&\text{if $0\leq s<a(y)$.}
\end{cases}
\end{align*}

Let us check: {\em $\phi_{a,k}$ is close to $(\rho,0)$ on $M\setminus\Hh$}. If we denote $\tau:=\psi\circ(\rho,0)\circ\varphi^{-1}$, it is enough to prove for each $y\in\partial\Hh$: {\em $\|F_{a,k}(y,s)-\tau(y,s)\|<b(y,s)$ if $-a(y)<s<0$.}

Observe that $\tau(y,s)=(y,0)$. Indeed, let $x\in U$ be such that $(y,s)=\varphi(x)=(\rho(x),h(x))$. Thus, $\psi(\rho(x),0)=(\rho(\rho(x)),0)=(\rho(x),0)=(y,0)$, so $\tau(y,s)=(y,0)$. In addition, if $-a(y)<s<0$, we have (as $a<\frac{3}{4}\veps(y)<1$)
$$
\|F_{a,k}(y,s)-\tau(y,s)\|=\|(y,s)-(y,0)\|=-s<a(y)<\sqrt{a(y)}\leq b(y,s).
$$

\noindent{\sc Step 5.} Let us see now how to obtain the desired Nash embedding. Recall that $Y_1,\ldots,Y_r$ are the irreducible components of $Y$ that meet $\partial\Hh$ but are not contained in $\partial\Hh$. Consider the Nash normal-crossings divisor $\overline{Y}:=\bigcup_{i=1}^r Y_i\subset M$. Recall that $U\cap\bigcup_{j=r+1}^s Y_j=\varnothing$, so $\phi_{a,k}|_{\bigcup_{j=r+1}^s Y_j}=(\pi|_{\Hh_+})^{-1}|_{\bigcup_{j=r+1}^s Y_j}$. Fix $k\geq 1$ such that $2k-1\geq d\cdot\max\{1,(\binom{d}{[d/2]}-1)\}+1$ (for instance $k=d\cdot\max\{1,(\binom{d}{[d/2]}-1)\}+1$), where $d$ is the dimension of both $\Hh$ and $M$. By \cite[Thm.1.6, Thm 1.7]{bfr} we can approximate the restriction $\phi_{a,k}|_Y:Y\to \phi_{a,k}(M)\subset D(\Hh)$ by a Nash map $\widehat{\phi}:Y\to\phi_{a,k}(M)\subset D(\Hh)$ such that $\widehat{\phi}|_{\bigcup_{j=r+1}^sY_j}=(\pi|_{\Hh_+})^{-1}|_{\bigcup_{j=r+1}^s Y_j}$, $\widehat{\phi}|_{\partial\Hh}=(\id_{\partial\Hh},0)$ and $\widehat{\phi}|_{\overline{Y}}:\overline{Y}\to \phi_{a,k}(M)\subset D(\Hh)$ satisfies $\widehat{\phi}(Y_i)\subset D(Y_i\cap\Hh)$ for $i=1,\ldots,r$. By \cite[Prop.8.2]{bfr} we can extend $\widehat{\phi}$ to a global Nash map $\phi:M\to\phi_{a,k}(M)\subset D(\Hh)$, that is close to $\phi_{a,k}$ (which is an ${\mathcal S}^{2k-1}$ diffeomorphism onto its image $\phi_{a,k}(M)$) in the ${\mathcal S}^1$ topology. Thus, by \cite[Lem.II.1.7]{sh} the map $\phi:M\to\phi_{a,k}(M)$ is a Nash diffeomorphism, so in particular $\phi(M)=\phi_{a,k}(M)$.

\noindent{\sc Step 6.} We show: {\em There exists an open semialgebraic neighborhood $W\subset U$ of $\partial\Hh$ such that $\cl(W)\subset M$ and $\phi|_{\cl(W)\cup\Hh}:\cl(W)\cup\Hh\to D(\Hh)$ is proper.\em} Once this is done we conclude that, up to take a smaller semialgebraic function $a>0$ if necessary, the Nash embedding $\phi:M\to D(\Hh)$ defined in {\sc Step 5} satisfies all the other required properties.

As $\Hh$ is a closed (semialgebraic) subset of $\R^n$, the Nash double $D(\Hh)$ is a closed (semialgebraic) subset of $\R^{n+1}$. As $M$ is locally compact, there exists by Lemma \ref{co} an open semialgebraic neighborhood $W_0\subset M$ of $\partial\Hh$ such that $\cl(W_0)\subset M$. As $\phi(M)$ is open in $D(\Hh)$, the set $C:=D(\Hh)\setminus\phi(M)$ is a closed (semialgebraic) subset of $D(\Hh)$ that does not intersect $\partial\Hh$. Recall that we have assumed $M=U\cup\Hh$ at the beginning of {\sc Step 4}. Let $V\subset\phi(U\cap W_0)$ be an open semialgebraic neighborhood of $\partial\Hh$ such that $\cl(V)\cap C=\varnothing$, so $\cl(V)\subset\phi(M)$. The set $W:=\phi^{-1}(V)$ is an open semialgebraic neighborhood of $\partial\Hh$ in $U$ such that $\phi(\cl(W))=\cl(V)$, because $\phi:M\to\phi(M)$ is a semialgebraic homeomorphism, $\cl(W)\subset\cl(W_0)\subset M$ and $\cl(V)\subset\phi(M)$. The restriction $\phi|_{\cl(W)\cup\Hh}:\cl(W)\cup\Hh\to D(\Hh)$ satisfies $\phi(\cl(W)\cup\Hh)=\cl(V)\cup \Hh_+$ because $\phi(\cl(W))=\cl(V)$ and $\phi(\Hh)=\Hh_+$. As $\cl(V)\cup\Hh_+$ is closed in $D(\Hh)$, if $K\subset D(\Hh)$ is compact, then the set $K\cap (\cl(V)\cup \Hh_+)$ is compact. The map $\phi|_{\cl(W)\cup\Hh}:\cl(W)\cup\Hh\to\cl(V)\cup \Hh_+\subset D(\Hh)$ is a semialgebraic homeomorphism, so $\phi^{-1}(K\cap (\cl(V)\cup \Hh_+))$ is a compact set. We conclude that the restriction $\phi|_{\cl(W)\cup\Hh}:\cl(W)\cup\Hh\to D(\Hh)$ is proper, as required.
\end{proof}

As a straightforward consequence of the previous embedding theorem, we have the following result. We keep all the notations introduced in the statement of Theorem \ref{embedding}.

\begin{cor}\label{piego}
The composition $f:=\pi\circ\phi:M\to\Hh$ is a Nash map with the following properties:
\begin{itemize}
\item[\rm{(i)}] $f(Y_i)\subset Y_i$ for $i=1,\ldots,r$ and $f|_{Y_i}=\id_{Y_i}$ for each $i=r+1,\ldots,s$.
\item[\rm{(ii)}] $f|_{\Hh}$ is a semialgebraic homeomorphism close to $\id_{\Hh}$. Moreover $f|_{\Int(\Hh)}$ is a Nash diffeomorphism and $f|_{\partial\Hh}=\id_{\partial\Hh}$.
\item[\rm{(iii)}] $f|_{M\setminus\Hh}:M\setminus\Hh\to\Hh$ is a Nash embedding close to $\rho|_{M\setminus\Hh}$. 
\item[\rm{(iv)}] At each point $x\in\partial\Hh$ the map $f$ has a local presentation of the type 
$$
(y_1,\ldots,y_d)\mapsto(y_1^2,y_2,\ldots,y_d).
$$
\item[\rm{(v)}] There exists an open semialgebraic neighborhood $V\subset M$ of $\partial\Hh$, such that $f|_{\cl(V)\cup\Hh}$ is a proper Nash map. 
\end{itemize}
\end{cor}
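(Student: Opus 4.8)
The plan is to read off each assertion from the Nash embedding $\phi\colon M\to D(\Hh)$ furnished by Theorem \ref{embedding}, composed with the projection $\pi\colon D(\Hh)\to\Hh$, whose basic properties are recorded in Lemma \ref{doubleii}. As $\phi$ and $\pi$ are Nash maps, $f:=\pi\circ\phi$ is a Nash map, and each item follows by pairing a property of $\phi$ with the matching property of $\pi$. For (i): if $1\le i\le r$ then $\phi(Y_i)\subset D(Y_i\cap\Hh)=(Y_i\times\R)\cap D(\Hh)$, and every point of this set is of the form $(x,t)$ with $x\in Y_i$, so $f(Y_i)=\pi(\phi(Y_i))\subset Y_i$; if $r+1\le i\le s$ then $Y_i\subset\Int(\Hh)$ and $\phi|_{Y_i}=(\pi|_{\Hh_+})^{-1}|_{Y_i}$ (the relative condition imposed on $\bigcup_{j=r+1}^sY_j$ in the last step of the proof of Theorem \ref{embedding}), whence $f|_{Y_i}=\pi\circ(\pi|_{\Hh_+})^{-1}|_{Y_i}=\id_{Y_i}$.

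For (ii) and (iii) I would use that $\phi|_{\Hh}\colon\Hh\to\Hh_+$ is a semialgebraic homeomorphism carrying $\partial\Hh$ onto $\partial\Hh\times\{0\}$ and $\Int(\Hh)$ onto $\Int(\Hh_+)=D(\Hh)\cap\{t>0\}$, while $\phi|_{M\setminus\Hh}$ is a Nash embedding with image contained in $D(\Hh)\cap\{t<0\}$. Composing with $\pi$ and invoking Lemma \ref{doubleii}(i), which states that $\pi$ restricts to a semialgebraic homeomorphism $\Hh_+\to\Hh$ and to Nash diffeomorphisms $D(\Hh)\cap\{\pm t>0\}\to\Int(\Hh)$, we get that $f|_{\Hh}$ is a semialgebraic homeomorphism, $f|_{\Int(\Hh)}$ a Nash diffeomorphism and $f|_{M\setminus\Hh}$ a Nash embedding; the equality $f|_{\partial\Hh}=\id_{\partial\Hh}$ follows from $\phi|_{\partial\Hh}=(\id_{\partial\Hh},0)$ and Lemma \ref{doubleii}(ii). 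For the closeness claims, $\pi$ is the restriction of the linear, hence $1$-Lipschitz, projection $\Hh\times\R\to\Hh$, so left composition with $\pi$ does not increase $\mathcal{S}^0$-distances; since $\pi\circ(\pi|_{\Hh_+})^{-1}=\id_{\Hh}$ and $\pi\circ(\rho|_{M\setminus\Hh},0)=\rho|_{M\setminus\Hh}$ by Lemma \ref{doubleii}(ii), the $\mathcal{S}^0$-closeness of $\phi|_{\Hh}$ to $(\pi|_{\Hh_+})^{-1}$ and of $\phi|_{M\setminus\Hh}$ to $(\rho|_{M\setminus\Hh},0)$ transfers to the $\mathcal{S}^0$-closeness of $f|_{\Hh}$ to $\id_{\Hh}$ and of $f|_{M\setminus\Hh}$ to $\rho|_{M\setminus\Hh}$.

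For (iv), fix $x\in\partial\Hh$; since $\phi(M)$ is open in $D(\Hh)$ and $\phi(x)=(x,0)\in\phi(M)$, the map $\phi$ restricts to a Nash diffeomorphism from an open neighborhood of $x$ in $M$ onto an open neighborhood of $(x,0)$ in $D(\Hh)$, and composing with the local presentation of $\pi$ at $(x,0)$ provided by Lemma \ref{doubleii}(iii) (after replacing the source coordinates there by their $\phi$-pullbacks) yields the local presentation $(y_1,\dots,y_d)\mapsto(y_1^2,y_2,\dots,y_d)$ for $f$ at $x$. For (v), take as $V$ the open semialgebraic neighborhood $W\subset U$ of $\partial\Hh$ with $\cl(W)\subset M$ for which $\phi|_{\cl(W)\cup\Hh}\colon\cl(W)\cup\Hh\to D(\Hh)$ is proper (Theorem \ref{embedding}); since $\pi$ is proper by Lemma \ref{doubleii}(iv) and a composition of proper maps is proper, $f|_{\cl(V)\cup\Hh}=\pi\circ\phi|_{\cl(V)\cup\Hh}$ is a proper Nash map. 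The argument is essentially bookkeeping; the two steps that need a little attention are (iv), where the coordinate change must be carried through the composition, and (i) for $r+1\le i\le s$, which relies on the exact identity $\phi|_{Y_i}=(\pi|_{\Hh_+})^{-1}|_{Y_i}$ coming from the relative Nash approximation inside the proof of Theorem \ref{embedding}, and not merely on the set-theoretic equality $\phi(Y_i)=(Y_i\times\R)\cap\Int(\Hh_+)$ appearing in its statement.
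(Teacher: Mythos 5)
Your proposal is correct and follows essentially the same route as the paper, which simply derives (i)--(iii) from Theorem \ref{embedding}, (iv) from Lemma \ref{doubleii}(iii) together with the fact that $\phi$ is a Nash diffeomorphism onto its open image, and (v) from the properness of $\pi$ and of $\phi|_{\cl(W)\cup\Hh}$. Your extra care in (i) for $i=r+1,\dots,s$ (invoking the pointwise identity $\phi|_{Y_j}=(\pi|_{\Hh_+})^{-1}|_{Y_j}$ from Step 5 of the proof of Theorem \ref{embedding}, rather than only the set-theoretic equality in its statement) is a legitimate and welcome precision.
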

\begin{proof}
Statements (i), (ii) and (iii) follow from Theorem \ref{embedding}, whereas statement (iv) is a consequence of Lemma \ref{doubleii}(iii) and the fact that $\phi:M\to\phi(M)\subset D(\Hh)$ is a Nash diffeomorphism onto its image. To prove (v), we use that $\pi$ is proper (see Lemma \ref{doubleii}(iv)) and that $\phi$ is proper on $\cl(V)\cup\Hh$ where $V\subset M$ is a small semialgebraic neighborhood of $\partial\Hh$ in $M$. 
\end{proof}

\subsection{Folding all the components of the boundary.} 
We are almost ready to prove Theorem \ref{fold}. Before, we need a technical lemma to confront local models:

\begin{lem}\label{change}
Consider the Nash map $\varphi:\R^d:\to\R^d,\ (x_1,\ldots,x_d)\mapsto(x_1^2,x_2,\ldots,x_d)$ and let $\psi_1,\psi_2:\R^d\to\R^d$ be Nash diffeomorphisms such that $\psi_i(\{\x_j=0\})=\{\x_j=0\}$ for $j=1,\ldots,s$ and $i=1,2$. Then there exist Nash functions $h_j:\R^d\to\R$ each one strictly positive around $\{\x_j=0\}$ and a Nash map $g:\R^d\to\R^{d-s}$ such that $g|_{\{\x_1=0,\ldots,\x_s=0\}}:\{\x_1=0,\ldots,\x_s=0\}\to\R^{d-s}$ is a Nash diffeomorphism and $\psi_2\circ\varphi\circ\psi_1=((\x_1h_1)^2,\x_2h_2,\ldots,\x_sh_s,g)$.
\end{lem}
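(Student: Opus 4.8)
The statement is local at a point of $\{\x_1=0,\ldots,\x_s=0\}$, so the plan is to compute the composition $\psi_2\circ\varphi\circ\psi_1$ coordinate by coordinate, exploiting the hypothesis that both $\psi_1$ and $\psi_2$ preserve each coordinate hyperplane $\{\x_j=0\}$ for $j=1,\ldots,s$. The key algebraic fact I will use repeatedly is the following: if $F:\R^d\to\R$ is a Nash function vanishing on $\{\x_j=0\}$, then $F=\x_j\cdot F'$ for a Nash function $F'$ on $\R^d$; and if moreover $F$ is a coordinate of a Nash diffeomorphism whose zero set is exactly $\{\x_j=0\}$ with the differential of that coordinate nonzero along $\{\x_j=0\}$, then $F'$ is nonvanishing (hence of constant sign, which we can take positive after possibly composing with a sign, or rather: positive on a neighborhood of $\{\x_j=0\}$, which is all the statement asks).

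**Key steps.** First I would write $\psi_1=(\psi_{1,1},\ldots,\psi_{1,d})$. Since $\psi_1(\{\x_j=0\})=\{\x_j=0\}$ and $\psi_1$ is a diffeomorphism, for $j=1,\ldots,s$ the function $\psi_{1,j}$ vanishes exactly on $\{\x_j=0\}$ and $\x_j\mid\psi_{1,j}$, so $\psi_{1,j}=\x_j\,a_j$ with $a_j$ a Nash function that is nonvanishing on a neighborhood of $\{\x_j=0\}$ (it cannot vanish there, since otherwise $d\psi_1$ would drop rank along $\{\x_j=0\}$); after absorbing a global sign into the definition of the $h_j$ we may take $a_j>0$ near $\{\x_j=0\}$. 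Applying $\varphi$ then squares the first coordinate: the first component of $\varphi\circ\psi_1$ is $\x_1^2 a_1^2$, the $j$-th for $2\le j\le s$ is $\x_j a_j$, and the remaining $d-s$ components are $\psi_{1,s+1},\ldots,\psi_{1,d}$. Second, I compose with $\psi_2$. For each $j=1,\ldots,s$, the $j$-th component of $\psi_2\circ\varphi\circ\psi_1$ is $\psi_{2,j}$ evaluated at the previous point; since $\psi_{2,j}$ vanishes exactly on $\{\x_j=0\}$ and the $j$-th coordinate of $\varphi\circ\psi_1$ is $\x_1^2a_1^2$ (for $j=1$) or $\x_ja_j$ (for $2\le j\le s$), which vanishes precisely on $\{\x_j=0\}$, we again factor: $\psi_{2,j}\circ(\varphi\circ\psi_1)=(\x_1 a_1)^2\, b_1$ for $j=1$ and $=\x_j a_j\, b_j$ for $2\le j\le s$, with $b_j$ Nash and nonvanishing (hence positive, after a sign choice) on a neighborhood of $\{\x_j=0\}$, by the same rank argument applied to $\psi_2$. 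Setting $h_1:=a_1\sqrt{b_1}$ (legitimate since $b_1>0$ near $\{\x_1=0\}$, and one can extend it to a globally defined Nash function by the usual denominator trick, e.g.\ replacing $b_1$ by $b_1^2/(\text{something}^2+b_1^2)$ to make it globally defined and nonnegative) and $h_j:=a_j b_j$ for $2\le j\le s$, gives the first $s$ components in the desired form $((\x_1h_1)^2,\x_2h_2,\ldots,\x_sh_s)$.

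**The remaining $d-s$ components and the diffeomorphism claim.** Let $g:\R^d\to\R^{d-s}$ collect the last $d-s$ components of $\psi_2\circ\varphi\circ\psi_1$. It remains to check that $g|_{\{\x_1=0,\ldots,\x_s=0\}}$ is a Nash diffeomorphism onto $\R^{d-s}$. On the subspace $N:=\{\x_1=\cdots=\x_s=0\}$ the map $\psi_1$ restricts to a Nash diffeomorphism $N\to N$ (it preserves $N$ and is a global diffeomorphism), then $\varphi|_N=\id_N$ (squaring the already-zero first coordinate does nothing), then $\psi_2$ restricts to a Nash diffeomorphism $N\to N$; composing, $(\psi_2\circ\varphi\circ\psi_1)|_N:N\to N$ is a Nash diffeomorphism. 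Identifying $N\cong\R^{d-s}$ via the last $d-s$ coordinates, this restriction is exactly $g|_N$ (the first $s$ components $(\x_1h_1)^2,\x_jh_j$ all vanish on $N$), so $g|_N$ is a Nash diffeomorphism, as required.

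**Main obstacle.** The only genuinely delicate point is the extension/positivity bookkeeping: the functions $a_j,b_j$ are a priori only nonvanishing on a \emph{neighborhood} of $\{\x_j=0\}$, not globally, and $\sqrt{b_1}$ must be turned into a globally defined Nash function. This is handled by the standard trick of replacing each $b_j$ by a globally defined, everywhere-positive-near-$\{\x_j=0\}$ Nash function agreeing with it to the needed order, or simply by noting that the statement only requires $h_j$ to be strictly positive \emph{around} $\{\x_j=0\}$, so one may shrink to a neighborhood and invoke \cite[Prop.2.6.2]{bcr} or a partition-of-unity-free gluing (as in Step~3 of the proof of Lemma~\ref{equation}) to produce a global Nash representative. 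Everything else is a routine factorization computation using that a Nash function vanishing on a coordinate hyperplane is divisible by the corresponding coordinate.
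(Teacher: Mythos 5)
Your proposal is correct and follows essentially the same route as the paper's proof: factor each component $\psi_{ij}=\x_j h_{ij}$ using that a Nash function vanishing on a coordinate hyperplane is divisible by that coordinate, compose, set $h_1:=h_{11}\sqrt{h_{21}(\cdots)}$ and $h_j:=h_{1j}h_{2j}(\cdots)$, and verify the claim on $g$ by restricting to $\{\x_1=0,\ldots,\x_s=0\}$, where $\varphi$ acts as the identity. You are in fact slightly more careful than the paper about the global definedness of the square root and the sign normalization of the cofactors, both of which the paper passes over silently.
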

\begin{proof}
As $\psi_i(\{\x_j=0\})=\{\x_j=0\}$ for $j=1,\ldots,s$ and $\psi_i$ is a Nash diffeomorphism, then $\psi_i|_{\{\x_1=0,\ldots,\x_s=0\}}:\{\x_1=0,\ldots,\x_s=0\}\to\{\x_1=0,\ldots,\x_s=0\}$ is a Nash diffeomorphism. Thus, 
$$
\psi_2\circ\varphi\circ\psi_1|_{\{\x_1=0,\ldots,\x_s=0\}}:\{\x_1=0,\ldots,\x_s=0\}\to\{\x_1=0,\ldots,\x_s=0\}
$$ 
is a Nash diffeomorphism. Let $\pi:\R^d\to\R^{d-s},\ (x_1,\ldots,x_d)\mapsto(x_{s+1},\ldots,x_d)$ and let $g:=\pi\circ\psi_2\circ\varphi\circ\psi_1$. We have 
$$
g|_{\{\x_1=0,\ldots,\x_s=0\}}:\{\x_1=0,\ldots,\x_s=0\}\to\R^{d-s}
$$ 
is a Nash diffeomorphism and $\psi_2\circ\varphi\circ\psi_1=(g_1,\ldots,g_s,g)$ for some Nash functions $g_i:\R^d\to\R$. As $\psi_i(\{\x_j=0\})=\{\x_j=0\}$, its $j$th component $\psi_{ij}$ is divisible by $\x_j$ and thus, there exists a Nash function $h_{ij}:\R^d\to\R$ that is strictly positive around $\{\x_j=0\}$ such that $\psi_{ij}=\x_jh_{ij}$ for $i=1,2$ and $j=1,\ldots,s$. Thus, 
\begin{multline*}
\psi_2\circ\varphi\circ\psi_1=\\
(\psi_{11}^2h_{21}(\psi_{11}^2,\psi_{12},\ldots,\psi_{1d}),\psi_{12}h_{22}(\psi_{11}^2,\psi_{12},\ldots,\psi_{1d}),\cdots,\psi_{1s}h_{2s}(\psi_{11}^2,\psi_{12},\ldots,\psi_{1d}),g)\\
=((\x_1h_{11})^2h_{21}(\psi_{11}^2,\psi_{12},\ldots,\psi_{1d}),\x_2h_{12}h_{22}(\psi_{11}^2,\psi_{12},\ldots,\psi_{1d}),\ldots\\
\ldots,\x_sh_{1s}h_{2s}(\psi_{11}^2,\psi_{12},\ldots,\psi_{1d}),g).\end{multline*}
It is enough to take $h_1:=h_{11}\sqrt{h_{21}(\psi_{11}^2,\psi_{12},\ldots,\psi_{1d})}$ and $h_j:=h_{1j}h_{2j}(\psi_{11}^2,\psi_{12},\ldots,\psi_{1d})$ for $j=2,\ldots,s$.
\end{proof}

We next prove Theorem \ref{fold}.

\begin{proof}[Proof of Theorem \em \ref{fold}]
The proof os conducted in several steps:

\noindent{\sc Step 1.} {\em Initial preparation and construction of the folding Nash maps.}
Let $M$ be a Nash envelope of $\Qq$ such that the Nash closure $Y$ of $\partial \Qq$ is a Nash normal-crossings divisor of $M$ and $\Qq\cap Y=\partial\Qq$ (Section \ref{envelope}). Let $Y_1,\ldots,Y_\ell$ be the irreducible components of $Y$. By Lemma \ref{equation} there exist, after shrinking $M$ if necessary, Nash equations $h_i:M\to\R$ of $Y_i$ such that $d_xh_i:T_xM\to\R$ is surjective for each $x\in Y_i$ and $\Hh_i:=h_i^{-1}([0,+\infty))$ is a Nash manifold with boundary whose boundary is $Y_i$ for each $i=1,\ldots,\ell$. By Mostowski's trick (Proposition \ref{Mos}) we may assume $M$ is a closed (semialgebraic) set of $\R^n$, so $\Hh_i$ is a closed (semialgebraic) set of $\R^n$ for $i=1,\ldots,\ell$. By Corollary \ref{piego} there exist open semialgebraic neighborhoods $V_i\subset U_i\subset M$ of $\partial\Hh_i$ such that $\cl(V_i)\subset U_i$ and proper Nash maps $f_i:\cl(V_i)\cup\Hh_i\to\Hh_i$ and Nash retractions $\rho_i:U_i\to Y_i$ for $i=1,\ldots,\ell$ such that: 
\begin{itemize}
\item[(i)] $f_i(Y_j)\subset Y_j$ for $j=1,\ldots,\ell$ and $f_i|_{Y_i}=\id_{Y_i}$, 
\item[(ii)] $f_i|_{\Hh_i}:\Hh_i\to\Hh_i$ is a semialgebraic homeomorphism close to the identity map, whose restriction $f_i|_{\Int(\Hh_i)}:\Int(\Hh_i)\to\Int(\Hh_i)$ is a Nash diffeomorphism and $f_i|_{\partial\Hh_i}=\id_{\partial\Hh_i}$, 
\item[(iii)] $f_i|_{V_i\setminus\Hh_i}:V_i\setminus\Hh_i\to\Int(\Hh_i)$ is a semialgebraic embedding close to $\rho_i|_{V_i\setminus\Hh_i}:V_i\setminus\Hh_i\to\partial\Hh_i$,
\item[(iv)] $f_i$ has local representations $(y_1,\ldots,y_d)\mapsto(y_1^2,y_2,\ldots,y_d)$ at each point $x\in Y_i$. 
\end{itemize}

\noindent{\sc Step 2.} {\em Properties of the folding Nash maps with respect to $\Qq$.} 
Let us check: {\em $f_i|_{\Qq}:\Qq\to\Qq$ is a semialgebraic homeomorphism close to the identity map, whose restriction $f_i|_{\Int(\Qq)}:\Int(\Qq)\to\Int(\Qq)$ is a Nash diffeomorphism for $i=1,\ldots,\ell$}. We show first: \em $f_i(\Qq)=\Qq$ and $f_i(\Int(\Qq))=\Int(\Qq)$ for $i=1,\ldots,\ell$\em. Once this is done, as $\Qq=\Hh_1\cap\cdots\cap\Hh_\ell$, $\Int(\Qq)=\Int(\Hh_1)\cap\cdots\cap\Int(\Hh_\ell)$, $f_i|_{\Hh_i}:\Hh_i\to\Hh_i$ is a semialgebraic homomorphism close to the identity map and $f_i|_{\Int(\Hh_i)}:\Int(\Hh_i)\to\Int(\Hh_i)$ is a Nash diffeomorphism for each $i=1,\ldots,\ell$, we deduce that $f_i|_\Qq:\Qq\to\Qq$ is a semialgebraic homeomorphism close to the identity map and its restriction $f_i|_{\Int(\Qq)}:\Int(\Qq)\to\Int(\Qq)$ is a Nash diffeomorphism for $i=1,\ldots,\ell$.

Fix $i=1,\ldots,\ell$. As $f_i:\cl(V_i)\cup\Hh_i\to\Hh_i$ is proper and $\Qq=\cl(\Int(\Qq))\subset\Hh_i$, we have $f_i(\Qq)=f_i(\cl(\Int(\Qq)))=\cl(f_i(\Int(\Qq)))$, so it is enough to prove: $f_i(\Int(\Qq))=\Int(\Qq)$. 

As the map $f_i|_{\Int(\Hh_i)}:\Int(\Hh_i)\to\Int(\Hh_i)$ is a Nash diffeomorphism and $\Int(\Qq)\subset\Int(\Hh_i)$, we have $f_i(\Int(\Qq))\subset\Int(\Hh_i)$. As $f_i(Y_j)\subset Y_j$ for $j=1,\ldots,\ell$, we deduce $f_i(Y_j\cap\Int(\Hh_i))$ is a closed Nash submanifold of the Nash manifold $Y_j\cap\Int(\Hh_i)$ and both have the same dimension. Consequently, the image of each connected component of $f_i(Y_j\cap\Int(\Hh_i))$ is a connected component of $Y_j\cap\Int(\Hh_i)$. As $f_i$ is close to the identity map, $f_i(D)=D$ for each connected component of $Y_j\cap\Int(\Hh_i)$. Thus, $f_i(Y_j\cap\Int(\Hh_i))=Y_j\cap\Int(\Hh_i)$ and $f_i(Y\cap\Int(\Hh_i))=Y\cap\Int(\Hh_i)$. Observe that $\Int(\Qq)$ is a union of connected components of $\Int(\Hh_i)\setminus Y$ because $\Qq\cap Y=\partial\Qq$. As $f_i|_{\Int(\Hh_i)}:\Int(\Hh_i)\to\Int(\Hh_i)$ is a Nash diffeomorphism and $f_i(Y\cap\Int(\Hh_i))=Y\cap\Int(\Hh_i)$, we deduce that $f_i(\Int(\Qq))$ is also a union of connected components of $\Int(\Hh_i)\setminus Y$. As $f_i|_{\Int(\Qq)}$ is close to the identity map, $f_i(C)=C$ for each connected component of $\Int(\Qq)$. In particular, $f_i(\Int(\Qq))=\Int(\Qq)$, as claimed.

\noindent{\sc Step 3.} {\em Construction of suitable neighborhoods of $\Qq$.}
As $V_i\cup\Hh_i=V_i\cup\Int(\Hh_i)$ for $i=1,\ldots,\ell$, the intersection $W_0:=\bigcap_{i=1}^\ell(V_i\cup\Hh_i)$ is an open semialgebraic neighborhood of $\Qq$ in $M$. For each $i=1,\ldots,\ell$ there exists by Lemma \ref{co} an open semialgebraic neighborhood $W_i$ of $\Qq$ in $W_{i-1}$ such that $\cl(W_i)\subset\Omega_i:=\bigcap_{j=1}^\ell f_j^{-1}(W_{i-1})\cap W_{i-1}$ and observe that $f_j(\cl(W_i))\subset W_{i-1}$ for $1\leq i,j\leq\ell$. Let $\{C_{i\alpha}\}_\alpha$ be the family of the connected components of $W_i\setminus Y$ such that $\cl(C_{i\alpha})\cap\Qq=\varnothing$. Then $W_i\setminus\bigcup_{\alpha}\cl(C_{i\alpha})$ is an open semialgebraic neighborhood of $\Qq$ in $M$. Let us substitute $W_i$ by $W_i':=W_i\setminus\bigcup_{\alpha}\cl(C_{i\alpha})$. Now, the connected components $C$ of $W_i\setminus Y$ satisfy $\cl(C)\cap\Qq\neq\varnothing$, so $\cl(C)\cap\Hh_i\neq\varnothing$ for each $i=1,\ldots,\ell$ (because $\Qq=\Hh_1\cap\cdots\cap\Hh_\ell$ by Lemma \ref{inthi}). As we can do this reduction when construct $W_i$ and before we construct $W_{i+1}$, we keep the property that $f_j(\cl(W_i))\subset W_{i-1}$ for $1\leq i,j\leq\ell$.

\noindent{\sc Step 4.} {\em Crossed properties of the folding Nash maps.}
We claim: {\em $f_i(\Hh_j\cap W_k)\subset(\Hh_i\cap\Hh_j)\cap W_{k-1}$ for $1\leq i,j,k\leq\ell$.}

For simplicity we show: $f_2(\Hh_1\cap W_k)\subset\Hh_1\cap W_{k-1}$ for $k=1,\ldots,\ell$. As $f_2$ is continuous and $f_2(W_k)\subset W_{k-1}$, it is enough to prove: {\em $f_2(\Int(\Hh_1\cap\Hh_2))\subset\Int(\Hh_1\cap\Hh_2)$ and $f_2((\Int(\Hh_1)\cap W_k)\setminus\Hh_2)\subset\Int(\Hh_1)$ for $k=1,\ldots,\ell$}. 

To prove the inclusion $f_2(\Int(\Hh_1\cap\Hh_2))\subset\Int(\Hh_1\cap\Hh_2)$, it is enough to consider the Nash manifold with corners $\Hh_1\cap\Hh_2$ and to proceed analogously as above with $f_i$ and $\Qq$. 

We show next: $f_2((\Int(\Hh_1)\cap W_k)\setminus\Hh_2)\subset\Int(\Hh_1)$. To that end, we prove: {\em $f_2(C)\subset\Int(\Hh_1)$ for each connected component $C$ of $(\Int(\Hh_1)\cap W_k)\setminus\Hh_2$.}

As $f_2|_{W_k\setminus\Hh_2}:W_k\setminus\Hh_2\to\Int(\Hh_2)$ is a Nash embedding and $f_2(Y_1)\subset Y_1$, we deduce that each connected component $C$ of $(\Int(\Hh_1)\cap W_k)\setminus\Hh_2$ is mapped under $f_2$ into a connected component of $\Int(\Hh_2)\setminus Y_1$. Let $C$ be a connected component of $(\Int(\Hh_1)\cap W_k)\setminus\Hh_2\subset W_k\setminus(Y_1\cup Y_2)$. As 
$$
(\Int(\Hh_1)\cap W_k)\setminus\Hh_2=(\Hh_1\setminus\Int(\Hh_2))\cap(W_k\setminus Y_1)\cap(W_k\setminus Y_2)=(\Hh_1\setminus\Int(\Hh_2))\cap(W_k\setminus(Y_1\cup Y_2))
$$ 
is open and closed in $W_k\setminus (Y_1\cup Y_2)$, we deduce that $C$ is a connected component of $W_k\setminus (Y_1\cup Y_2)$. 

\noindent{\sc Case 1.} $\cl(C)\cap Y_2\neq\varnothing$. As $C$ is a connected component of $W_k\setminus (Y_1\cup Y_2)$ and $Y_1\cup Y_2$ is a Nash normal-crossings divisor, there exists a point $x\in\cl(C)\cap (Y_2\setminus Y_1)$. Let $U^x$ be a semialgebraic neighborhood of $x$ in $W_k$ that does not meet $Y_1$. As $x\in\cl(C)$ and $C\subset\Int(\Hh_1)$, we deduce $U^x\subset\Int(\Hh_1)$. As $\rho_2(x)=x\in\Int(\Hh_1)$, and $f_2|_{W_k\setminus\Hh_2}:W_k\setminus\Hh_2\to\Int(\Hh_2)$ is close to $\rho_2|_{W_k\setminus\Hh_2}:W_k\setminus\Hh_2\to\partial\Hh_2$, we may assume $f_2(U^x)\subset\Int(\Hh_1)$. Thus, $f_2(C)$ meets $\Int(\Hh_1)$. As $f_2(C)$ is a connected subset of $\Int(\Hh_2)\setminus Y_1$ and $\Int(\Hh_1)\cap\Int(\Hh_2)=\Hh_1\cap(\Int(\Hh_2)\setminus Y_1)$ is open and closed in $\Int(\Hh_2)\setminus Y_1$ and $f_2(C)$ meets one of the connected components of $\Int(\Hh_1)\cap\Int(\Hh_2)$, we deduce $f_2(C)$ is contained in one of the connected components of $\Int(\Hh_1)\cap\Int(\Hh_2)$. Consequently, $f_2(C)\subset\Int(\Hh_1)\cap\Int(\Hh_2)\subset\Int(\Hh_1)$.

\noindent{\sc Case 2.} $\cl(C)\cap Y_2=\varnothing$. As $C\subset \Hh_1\setminus\Hh_2$, we have
\begin{multline*}
\cl(C)\cap\Hh_2=\cl(C)\cap (Y_2\cup\Int(\Hh_2))=\cl(C)\cap\Int(\Hh_2)\\
=\cl(C\cap\Int(\Hh_2))\cap\Int(\Hh_2)\subset\cl(C\cap\Hh_2)=\varnothing.
\end{multline*}
Observe that $C$ is the closure in $W_k\setminus (Y_1\cup Y_2)$ of the union of some connected components of $W_k\setminus Y$, which is a contradiction, because the closure of each connected components of $W_k\setminus Y$ meets $\Qq\subset\Hh_2$. 

Thus, $\cl(C)\cap Y_2\neq\varnothing$ for each connected component of $(\Int(\Hh_1)\cap W_k)\setminus\Hh_2$. Consequently, $f_2(C)\subset\Int(\Hh_1)$ for each connected component $C$ of $(\Int(\Hh_1)\cap W_k)\setminus\Hh_2$, so $f_2((\Int(\Hh_1)\cap W_k)\setminus\Hh_2)\subset\Int(\Hh_1)$, as claimed.

\noindent{\sc Step 5.} {\em Construction and requested properties of the joint folding Nash map.}
For $i=1,\ldots,\ell$ define $f^\bullet_i:=f_i|_{\cl(W_{\ell-i+1})}:\cl(W_{\ell-i+1})\to\cl(W_{\ell-i})$, which is a proper map, because $f_i:\cl(V_i)\cup\Qq\to\Hh_i$ is a proper map such that $f(\cl(W_{\ell-i+1}))\subset\cl(W_{\ell-i})$ and $\Hh_i$ is a closed semialgebraic subset of $\R^n$. Consider the composition $f:=f_\ell^\bullet\circ\cdots\circ f_1^\bullet:\cl(W_\ell)\to\cl(W_0)$, which is a proper map, because it is a composition of proper maps. Denote $W:=W_\ell$ and let us check that $f:\cl(W)\to\cl(W_0)$ satisfies the following properties:
\begin{itemize}
\item[(1)] $f(Y_i\cap\cl(W_\ell))\subset Y_i$ for each $i=1,\ldots,\ell$ (because the corresponding property is satisified by each $f_i$),
\item[(2)] $f|_{\Qq}=f_\ell|_\Qq\circ\cdots\circ f_1|_\Qq:\Qq\to\Qq$ is a semialgebraic homeomorphism close to the identity map, whose restriction $f|_{\Int(\Qq)}:\Int(\Qq)\to\Int(\Qq)$ is a Nash diffeomorphism (because the corresponding property is satisified by each $f_i$),
\item[(3)] $f(\cl(W))=\Qq$ (so $f:\cl(W)\to\Qq$ is a proper Nash map),
\item[(4)] $f$ has local representations $(y_1,\ldots,y_d)\mapsto(y_1^2,\ldots,y_s^2,y_{s+1},\ldots,y_d)$ at each point $x\in\partial\Qq$. The integer $s\geq1$ depends on the point $x$ and corresponds to the number of irreducible components of $Y$ that pass through $x$.
\end{itemize}

We first prove (3). Using {\sc Step 4} recursively we have the following:
\begin{equation*}
\begin{split}
f(W)&=(f_\ell^\bullet\circ\cdots\circ f_1^\bullet)(W_\ell)=(f_\ell\circ\cdots\circ f_2)(W_{\ell-1}\cap\Hh_1)\\
&\subset(f_\ell\circ\cdots\circ f_3)(W_{\ell-2}\cap\Hh_1\cap\Hh_2)\subset(f_\ell\circ\cdots\circ f_4)(W_{\ell-3}\cap\Hh_1\cap\Hh_2\cap\Hh_3)\\
&\subset\cdots\subset W_0\cap\Hh_1\cap\cdots\cap\Hh_\ell=\Qq.
\end{split}
\end{equation*}
As $f$ is proper and $f(\Qq)=\Qq$, we conclude $f(\cl(W))=\cl(f(W))=\cl(\Qq)=\Qq$.

We next show (4). Pick a point $x\in\partial\Qq$ and assume that $x$ belongs exactly to $Y_1,\ldots,Y_s$. Recall that the analytic closure of $\partial\Qq_x$ is $Y_{1,x}\cup\cdots\cup Y_{s,x}$ and there exists an open semialgebraic neighborhood $U\subset M$ of $x$ equipped with a Nash diffeomorphism $u:U\to\R^d$ such that $u(x)=0$, $u(Y_j\cap U)=\{\x_j=0\}$ and $u(\Qq\cap U)=\{\x_1\geq0,\ldots,\x_s\geq0\}$. 

Observe that $\eta:=u\circ f\circ u^{-1}=\eta_\ell\circ\cdots\circ\eta_1$ where $\eta_k:=u\circ f_k^\bullet\circ u^{-1}:\R^d\to\R^d$ is either a Nash diffeomorphism or a Nash map that has a local representation 
$$
(y_1,\ldots,y_d)\mapsto(y_1,\ldots,y_{k-1},y_k^2,y_{k+1},\ldots,y_d)
$$ 
at each point $y\in\{\y_k=0\}$ for each $k=1,\ldots,\ell$. In addition, these local representations preserve the local representations of the Nash hypersurfaces $Y_1,\ldots,Y_s$, so they correspond to coordinates' hyperplanes in these (local) coordinates. By Lemma \ref{change} we may assume that $\eta=\sigma_s\circ\cdots\circ\sigma_1$ is a composition of Nash maps of the type $\sigma_m:\R^d\to\R^d$,
$$
(x_1,\ldots,x_d)\mapsto(x_1h_{1m},\ldots,x_{m-1}h_{m-1,m},x_m^2h_{mm}^2,x_{m+1}h_{m+1,m},\ldots,x_sh_{sm},g_m)
$$
where $h_{jm}:\R^d\to\R$ is a Nash function that does not vanish around $\{\x_j=0\}$ and $g_m:\R^d\to\R^{d-s}$ satisfies $g_m|_{\{\x_1=0,\ldots,\x_s=0\}}:\{\x_1=0,\ldots,\x_s=0\}\to\R^{d-s}$ is a Nash diffeomorphism. Thus, there exist Nash functions $w_j:\R^d\to\R$ that do not vanish around $\{\x_j=0\}$ and a Nash map $g:\R^d\to\R^{d-s}$ such that $g|_{\{\x_1=0,\ldots,\x_s=0\}}:\{\x_1=0,\ldots,\x_s=0\}\to\R^{d-s}$ is a Nash diffeomorphism satisfying
$$
\eta:\R^d\to\R^d,\ z:=(z_1,\ldots,z_d)\mapsto(z_1^2w_1^2(z),\ldots,z_s^2w_s^2(z),g(z)).
$$
The Nash map 
$$
\psi:\R^d\to\R^d,\ z:=(z_1,\ldots,z_d)\mapsto(z_1w_1(z),\ldots,z_sw_s(z),g(z))
$$
is a Nash diffeomorphism around the origin because the determinant of its Jacobian matrix at the origin
$$
J_{\psi}(0)=\left[
\begin{array}{ccc|c}
w_1(0)&\cdots&0&0\\ 
\vdots&\ddots&\vdots&\vdots\\
0&\cdots &w_s(0)&0\\\hline
*&\ldots &*&J_g(0)
\end{array}\right]
$$
is $w_1(0)\cdots w_s(0)J_g(0)\neq0$. After shrinking the open semialgebraic neighborhood $U\subset M$ of $x$, there exists a Nash diffeomorphism $\phi:\R^d\to\R^d$ such that $\psi\circ\phi=\id_{\R^d}$. As $\eta=\theta\circ\psi$, where
$$
\theta:\R^d\to\R^d,\ (z_1,\ldots,z_d)\mapsto(z_1^2,\cdots,z_s^2,z_{s+1},\ldots,z_d),
$$
we deduce that $\eta\circ\phi=\theta$, so $f$ has local representation $(y_1,\ldots,y_d)\mapsto(y_1^2,\ldots,y_s^2,y_{s+1},\ldots,y_d)$ around $x\in\partial\Qq$. Recall that $s\geq1$ corresponds to the number of irreducible components of $Y$ that passes through $x$, as required.
\end{proof}

\section{Nash double of a Nash manifold with corners.}\label{s4}

Let $\Qq\subset\R^n$ be a $d$-dimensional Nash manifold with corners and $M$ a Nash envelope of $\Qq$ such that the Nash closure $Y$ of $\partial\Qq$ is a Nash normal-crossings divisor of $M$ and $\Qq\cap Y=\partial\Qq$. Let $Y_1,\ldots,Y_\ell$ be the irreducible components of $Y$ and $h_i:M\to\R$ Nash equations of $Y_i$ such that $d_xh_i:T_xM\to\R$ is surjective for $i=1,\ldots,\ell$. By Lemma \ref{equation} the sets $\Hh_i:=h_i^{-1}([0,+\infty))$ are Nash manifolds with boundary $Y_i$ that contain $\Qq$ as a closed subset. Write $\Qq=\Hh_1\cap\ldots\cap\Hh_\ell=\{h_1\geq0,\ldots,h_\ell\geq0\}\subset M$ (Lemma \ref{inthi}), $t:=(t_1,\ldots,t_\ell)$ and define the {\em Nash double of $\Qq$} as:
$$
D(\Qq):=\{(x,t)\in M\times\R^\ell:\ t_1^2-h_1(x)=0,\ldots,t_\ell^2-h_\ell(x)=0\}.
$$
Observe that $D(\Qq)$ only depends on the Nash equations $h_1,\ldots,h_\ell$, which are unique up to multiplication by strictly positive Nash functions. Thus, $D(\Qq)$ is unique up to Nash diffeomorphism. In addition, the reader can check that $D(\Qq)$ is the inverse image of the value $0$ under the Nash map $f:M\times\R^\ell\to\R^\ell, (x,t)\mapsto(t_1^2-h_1(x),\ldots,t_\ell^2-h_\ell(x))$ and that $0$ is a regular value for $f$, so $D(\Qq)$ is a $d$-dimensional Nash manifold. We keep this notations along the whole section. We first analyze the local structure of $D(\Qq)$.

\subsection{Local structure of the double of a Nash manifold with corners}
Let us show how to construct a Nash chart of $D(\Qq)$ from a Nash chart of $\Qq$. To lighten notations we only consider the first indices ($1,\ldots,m$ for some $1\leq m\leq d$) to construct the Nash charts, but the analogous construction can be done if one chooses other subsets of indices $1\leq i_1<\cdots<i_m\leq\ell$ for some $1\leq m\leq d$ such that $D(\Qq)\cap\{\t_{i_1}=0,\ldots,\t_{i_m}=0\}\neq\varnothing$.

\begin{lem}[Nash charts for $D(\Qq)$]\label{dq}
Let $U\subset M$ be an open semialgebraic set endowed with a Nash diffeomorphism $u:U\to\R^d$ such that $u(\Qq\cap U)=\{\x_1\geq0,\ldots,\x_m\geq0\}$ for some $1\leq m\leq d$. Let $\lambda_1,\ldots,\lambda_m:U\to\R$ be strictly positive Nash functions such that $u_k=h_k\lambda_k$ for $k=1,\ldots,m$. Then
\begin{multline*}
v:(U\times\R^\ell)\cap D(\Qq)\cap\{\t_{m+1}>0,\ldots,\t_\ell>0\}\to\R^d,\\ 
(x,t)\mapsto\Big(t_1\sqrt{\lambda_1(x)},\ldots,t_m\sqrt{\lambda_m(x)},u_{m+1}(x),\ldots,u_d(x)\Big)
\end{multline*}
is a Nash diffeomorphism.
\end{lem}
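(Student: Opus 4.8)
The goal is to show that the map $v$ is a Nash diffeomorphism from $(U\times\R^\ell)\cap D(\Qq)\cap\{\t_{m+1}>0,\ldots,\t_\ell>0\}$ onto $\R^d$. The strategy is to exhibit an explicit Nash inverse, built from the Nash chart $u$ on $\Qq$. First I would observe that on the domain in question, $\t_{m+1},\ldots,\t_\ell$ are positive, and since $\t_k^2=h_k(x)$ for each $k$, this forces $h_k(x)>0$ for $k=m+1,\ldots,\ell$ and $h_k(x)\geq0$ for $k=1,\ldots,m$; thus the first coordinates $\t_k$ ($k\le m$) recover $\sqrt{h_k(x)}$ up to sign while the remaining ones are determined uniquely by $\t_k=\sqrt{h_k(x)}$. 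Since $u_k=h_k\lambda_k$ with $\lambda_k>0$ Nash, we have $u_k(x)=\t_k^2\lambda_k(x)\ge0$ on the domain, so $u(x)$ lands in the quadrant $\{\x_1\ge0,\ldots,\x_m\ge0\}=u(\Qq\cap U)$, and $t_k\sqrt{\lambda_k(x)}$ is a Nash square root of $u_k(x)=h_k(x)\lambda_k(x)$ (note $\sqrt{\lambda_k}$ is Nash because $\lambda_k$ is strictly positive).

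Next I would write down the candidate inverse. Given $(y_1,\ldots,y_d)\in\R^d$, set $x:=u^{-1}(y_1^2,\ldots,y_m^2,y_{m+1},\ldots,y_d)\in U$ — this is well-defined since $(y_1^2,\ldots,y_m^2,y_{m+1},\ldots,y_d)$ lies in $u(\Qq\cap U)$ hence in $u(U)=\R^d$ — and then set $t_k:=y_k/\sqrt{\lambda_k(x)}$ for $k=1,\ldots,m$ and $t_k:=\sqrt{h_k(x)}$ for $k=m+1,\ldots,\ell$. For the last block I would check that $h_k(x)>0$ for $k>m$: indeed $x\in\Qq\cap U$ and, after possibly shrinking $U$ so that $U$ meets only the components $Y_1,\ldots,Y_m$ of $Y$ (which is the content of the hypothesis $u(\Qq\cap U)=\{\x_1\ge0,\ldots,\x_m\ge0\}$ together with $Y\cap\Qq=\partial\Qq$ and the local normal-crossings picture of Theorem \ref{corners0}/Lemma \ref{inthi}), we get $Y_k\cap U=\varnothing$ for $k>m$, so $h_k$ has constant sign on $U$, and it is positive there since $\Qq\subset\Hh_k=\{h_k\ge0\}$ and $\Int(\Qq)\cap U\neq\varnothing$. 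Hence $\sqrt{h_k(x)}$ is Nash on $U$ for $k>m$. One then verifies by direct substitution that this map sends $\R^d$ into the prescribed domain (the sign conditions $t_k>0$ for $k>m$ hold, and $t_k^2-h_k(x)=0$ for all $k$), and that it is a two-sided inverse of $v$: composing $v$ after the candidate gives $(y_1\sqrt{\lambda_1(x)}/\sqrt{\lambda_1(x)},\ldots)=(y_1,\ldots,y_d)$, and composing the other way recovers $(x,t)$ using injectivity of $u$ and the relation $t_k^2=h_k(x)$ together with the sign of $t_k$.

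Finally I would note that both $v$ and its inverse are Nash: $v$ is a Nash map because $u$ is a Nash diffeomorphism, the $\lambda_k$ and hence $\sqrt{\lambda_k}$ are Nash on $U$, and $D(\Qq)\subset M\times\R^\ell$ carries its Nash manifold structure; the inverse is Nash because $u^{-1}$ is Nash, squaring is polynomial, $1/\sqrt{\lambda_k(x)}$ is Nash (as $\lambda_k>0$), and $\sqrt{h_k(x)}$ is Nash for $k>m$ by the argument above. Since a bijective Nash map with Nash inverse is a Nash diffeomorphism, this completes the proof.

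**Main obstacle.** The only genuinely non-formal point is justifying that $h_{m+1},\ldots,h_\ell$ are strictly positive on $U$ (equivalently, that $U$ meets no component of $Y$ other than $Y_1,\ldots,Y_m$), so that $\sqrt{h_k}$ is a Nash function there and the last block of the inverse is well-defined; this is where the hypothesis $u(\Qq\cap U)=\{\x_1\ge0,\ldots,\x_m\ge0\}$, combined with $\Qq\cap Y=\partial\Qq$ and the local structure of the normal-crossings divisor $Y$ from Lemma \ref{inthi}, must be invoked — possibly after shrinking $U$. Everything else is a routine verification that the two explicit formulas are mutually inverse Nash maps.
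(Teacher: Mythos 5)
Your proof is correct and follows essentially the same route as the paper, which simply writes down the explicit inverse $\phi(y)=\bigl(\varphi_m^*(y),\,y_1/\sqrt{\lambda_1(\varphi_m^*(y))},\ldots,y_m/\sqrt{\lambda_m(\varphi_m^*(y))},\sqrt{h_{m+1}(\varphi_m^*(y))},\ldots,\sqrt{h_\ell(\varphi_m^*(y))}\bigr)$ with $\varphi_m^*(y)=u^{-1}(y_1^2,\ldots,y_m^2,y_{m+1},\ldots,y_d)$ and leaves the verification to the reader. You in fact go a bit further by isolating the one real well-definedness issue (positivity of $h_k$ for $k>m$ on the image of $\varphi_m^*$), which the paper glosses over; note that this already holds on $\Qq\cap U$ without shrinking $U$, by transversality of $Y_k$ with the local quadrant together with $\Qq\subset\Hh_k$.
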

\begin{proof}
Denote $\varphi:=u^{-1}:\R^d\to U$, 
$$
\varphi_m^*:\R^d\to U,\ y:=(y_1,\ldots,y_d)\to\varphi(y_1^2,\ldots,y_m^2,y_{m+1},\ldots,y_d)
$$ 
and define
\begin{multline*}
\phi:\R^d\to(U\times\R^\ell)\cap D(\Qq)\cap\{\t_{m+1}>0,\ldots,\t_\ell>0\},\ 
y:=(y_1,\ldots,y_d)\mapsto\\
\Big(\varphi_m^*(y),\frac{y_1}{\sqrt{\lambda_1(\varphi_m^*(y))}},\ldots,\frac{y_m}{\sqrt{\lambda_m(\varphi_m^*(y))}},\sqrt{h_{m+1}(\varphi_m^*(y))},\ldots,\sqrt{h_\ell(\varphi_m^*(y))}\Big).
\end{multline*}
The reader can check that both $v$ and $\phi$ are well-defined Nash maps and that they are mutually inverse, so $v$ is a Nash diffeomorphism, as required.
\end{proof}
\begin{remark}
As $h_k$ is a Nash equation of $Y_k$ such that $d_yh_k:T_yY_k\to\R$ is surjective for $y\in Y_k$ and $\Qq=\{h_1\geq0,\ldots,h_\ell\geq0\}$ we deduce that if $U\subset M$ is an open semialgebraic set and $u_k:U\to\R$ is a Nash function such that $Y_k\cap U=\{u_k=0\}$ and $d_yu_k:T_yY_k\to\R$ is surjective for $y\in Y_k\cap U$, then there exists a Nash function $\lambda_k:U\to\R$ such that $u_k=h_k|_{U}\lambda_k$ and $\{\lambda_k=0\}=\varnothing$.
\end{remark}

\begin{cor}\label{hyper}
For each $i=1,\ldots,\ell$ define $h_i^*:D(\Qq)\to\R,\ (y,s)\mapsto h_i(y)$. Then the intersection $Y_i^*:=D(\Qq)\cap\{h_i^*=0\}=D(\Qq)\cap\{\t_i=0\}$ is a Nash hypersurface of $D(\Qq)$ for each $i=1,\ldots,\ell$. 
\end{cor}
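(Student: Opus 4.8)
The plan is to reduce everything to the explicit Nash charts of $D(\Qq)$ furnished by Lemma \ref{dq}. First I would record the identity that reconciles the two descriptions of $Y_i^*$: on $D(\Qq)$ one has $\t_k^2=h_k(y)$ for every $k$, so $h_i^*(y,s)=h_i(y)=s_i^2$, and therefore $D(\Qq)\cap\{h_i^*=0\}=D(\Qq)\cap\{\t_i=0\}$. Moreover this set is the zero set on $D(\Qq)$ of the Nash function $h_i^*$ (equivalently, of the restriction to $D(\Qq)$ of the coordinate projection $\t_i$), hence a Nash subset of $D(\Qq)$. It then remains only to check that $Y_i^*$ is, near each of its points, a Nash submanifold of $D(\Qq)$ of codimension $1$.

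For the local analysis, fix $(y_0,s_0)\in Y_i^*$, so that $y_0\in Y_i$. Since $Y$ is a Nash normal-crossings divisor of $M$, the set of indices $k$ with $y_0\in Y_k$ has the form $\{i_1,\dots,i_m\}\ni i$; by Theorem \ref{corners0} (and the Remark after Lemma \ref{dq}), after relabelling and shrinking, there is a connected open semialgebraic neighbourhood $U\subset M$ of $y_0$ with a Nash diffeomorphism $u\colon U\to\R^d$ such that $u(Y_k\cap U)=\{\x_k=0\}$ for $k=1,\dots,m$, $u(\Qq\cap U)=\{\x_1\ge0,\dots,\x_m\ge0\}$, and strictly positive Nash functions $\lambda_k$ on $U$ with $u_k=h_k\lambda_k$ for $k=1,\dots,m$. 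For $k>m$ we have $y_0\notin Y_k$, hence $h_k(y_0)>0$; shrinking $U$ we may assume $h_k>0$ on $U$, so $\t_k$ has constant sign on the connected component of $(U\times\R^\ell)\cap D(\Qq)$ containing $(y_0,s_0)$. Applying Lemma \ref{dq} to that component (after composing, if necessary, with the Nash involution of $D(\Qq)$ that changes the signs of the appropriate coordinates $\t_k$, $k>m$, which preserves $D(\Qq)$ because $\t_k^2=h_k(y)$), we obtain a Nash diffeomorphism $v$ of this neighbourhood of $(y_0,s_0)$ in $D(\Qq)$ onto $\R^d$ whose $i$-th coordinate is a strictly positive Nash multiple of $\t_i$. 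Since $Y_i^*=\{\t_i=0\}$, in the chart $v$ the set $Y_i^*$ corresponds exactly to the coordinate hyperplane $\{\y_i=0\}\subset\R^d$. Hence $Y_i^*$ is a Nash submanifold of $D(\Qq)$ of codimension $1$ near $(y_0,s_0)$, and, $(y_0,s_0)$ being arbitrary, $Y_i^*$ is a Nash hypersurface of $D(\Qq)$.

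The only subtlety — and what I expect to be the main obstacle — is that one should \emph{not} apply the implicit function theorem directly to $h_i^*$: on $D(\Qq)$ one has $h_i^*=\t_i^2$, which vanishes to second order along $Y_i^*$, so $0$ is not a regular value of $h_i^*$; this is precisely why the argument must go through the coordinate $\t_i$ (or through the charts of Lemma \ref{dq}). An alternative to the chart computation, in the same spirit, is to work with $g_i:=\t_i|_{D(\Qq)}$ directly: writing $T_{(y_0,s_0)}D(\Qq)=\{(v,w)\in T_{y_0}M\times\R^\ell:\ 2s_{0,k}w_k=d_{y_0}h_k(v)\ \text{for all }k\}$ and using $s_{0,i}=0$, one sees the $\partial/\partial\t_i$-direction is unconstrained, so $d_{(y_0,s_0)}g_i\colon T_{(y_0,s_0)}D(\Qq)\to\R$ is surjective and the implicit function theorem yields that $Y_i^*=\{g_i=0\}$ is a codimension-$1$ Nash submanifold. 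The remaining bookkeeping (choice of the index set $\{i_1,\dots,i_m\}$ and of signs for the coordinates $\t_k$ with $k>m$) is routine.
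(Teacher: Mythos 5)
Your proof is correct and follows essentially the same route as the paper, which also reads off $Y_i^*=D(\Qq)\cap\{\t_i=0\}$ as the coordinate hyperplane $\{0\}\times\R^{d-1}$ in the charts of Lemma \ref{dq} and concludes by covering $Y_i^*$ with such charts. You are in fact slightly more explicit than the paper about the sign bookkeeping for the coordinates $\t_k$ with $k>m$ and about why one must use $\t_i$ rather than $h_i^*=\t_i^2$, and your alternative tangent-space/implicit-function argument for $g_i:=\t_i|_{D(\Qq)}$ is also valid.
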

\begin{proof}
Consider the Nash chart of $D(\Qq)$ introduced in Lemma \ref{dq}. The image of $\{\t_1=0\}\cap(U\times\R^\ell)\cap D(\Qq)\cap\{\t_{m+1}>0,\ldots,\t_\ell>0\}$ under $v$ is $\{0\}\times\R^{d-1}$. As the previous charts cover $Y_i^*$, we deduce that $Y_i^*$ is a Nash hypersurface of $D(\Qq)$, as required.
\end{proof}

\subsection{Global structure of the double of a Nash manifold with corners}
We next analyze the main properties of $D(\Qq)$ that will be useful for the applications proposed in Section \ref{s5}.

\begin{thm}[Nash double of a Nash manifold with corners]\label{ndnmwc}
The Nash subset $D(\Qq)$ of $M\times\R^\ell$ is a $d$-dimensional Nash manifold that satisfies the following properties:
\begin{itemize}
\item[(i)] The projection $\pi:D(\Qq)\to\Qq,\ (x,t)\mapsto x$ is proper and surjective.
\item[(ii)] The semialgebraic set $\Qq^\ell:=D(\Qq)\cap\{t_1\geq0,\ldots,t_\ell\geq0\}$ is a Nash manifold with corners, which is Nash diffeomorphic to $\Qq$ and the Nash closure of $\partial\Qq^\ell$ in $D(\Qq)$ is the Nash normal-crossings divisor $Y^\ell:=\bigcup_{i=1}^\ell\{t_i=0\}$ of $D(\Qq)$.
\item[(iii)] The restriction $\pi|_{\Qq^\ell}:\Qq^\ell\to\Qq$ is a semialgebraic homeomorphism.
\item[(iv)] The restriction $\pi|_{\Int(\Qq^\ell)}:\Int(\Qq^\ell)\to\Int(\Qq)$ is a Nash diffeomorphism.
\item[(v)] There exists an open semialgebraic neighborhood $W\subset D(\Qq)$ of $\Qq^\ell$ Nash diffeomorphic to $M$.
\end{itemize}
\end{thm}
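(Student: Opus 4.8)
The plan is to dispatch items (i), (iii), (iv) and the first half of (ii) by short direct arguments, and to obtain (v) together with the Nash diffeomorphism $\Qq^\ell\cong\Qq$ in (ii) by realizing (a shrinking of) the Nash envelope $M$ as an open subset of $D(\Qq)$. Recall $D(\Qq)$ is already known to be a $d$-dimensional Nash manifold. For (i): the map $x\mapsto(x,\sqrt{h_1(x)},\dots,\sqrt{h_\ell(x)})$ is a continuous semialgebraic section of $\pi$ over $\Qq$, so $\pi$ is surjective; and $\pi$ is proper because on $D(\Qq)$ one has $t_i^2=h_i(x)$, hence for compact $K\subset\Qq$ the set $\pi^{-1}(K)$ is contained in $K\times\prod_{i=1}^\ell[-\sqrt{\max_K h_i},\sqrt{\max_K h_i}]$, a closed and bounded subset of $M\times\R^\ell$. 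For (iii) and (iv): on $\{t_1\ge0,\dots,t_\ell\ge0\}$ a point $(x,t)$ is recovered from $x=\pi(x,t)$ by $t_i=+\sqrt{h_i(x)}$, so $\pi|_{\Qq^\ell}$ is injective; being also surjective and proper it is a semialgebraic homeomorphism, which is (iii); over $\Int(\Qq)$ all $h_i$ are strictly positive, so the above section is Nash there and inverts $\pi|_{\Int(\Qq^\ell)}$, giving (iv) (equivalently, in the chart of Lemma \ref{dq} the map $\pi$ reads $(y_1,\dots,y_d)\mapsto(y_1^2,\dots,y_m^2,y_{m+1},\dots,y_d)$).

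For the first half of (ii): by Corollary \ref{hyper}, each $Y_i^*:=D(\Qq)\cap\{t_i=0\}$ is a Nash hypersurface of $D(\Qq)$. From the explicit tangent space $T_{(x,t)}D(\Qq)=\{(v,w)\in T_xM\times\R^\ell:\ d_xh_j(v)=2t_jw_j,\ j=1,\dots,\ell\}$ one reads that $t_i|_{D(\Qq)}$ is a Nash equation of $Y_i^*$ with surjective differential on it: when $t_i=0$ the vector $(0,e_i)$, with $e_i$ the $i$-th coordinate vector of $\R^\ell$, lies in $T_{(x,t)}D(\Qq)$ and $t_i|_{D(\Qq)}$ sends it to $1$; and the vectors $(0,e_{i_1}),\dots,(0,e_{i_k})$ show that at a common zero of $t_{i_1},\dots,t_{i_k}$ the differentials of $t_{i_1}|_{D(\Qq)},\dots,t_{i_k}|_{D(\Qq)}$ are linearly independent. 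Hence $Y^\ell=\bigcup_{i=1}^\ell Y_i^*$ is a Nash normal-crossings divisor of $D(\Qq)$. Since $\Qq^\ell$ is nonempty, Lemma \ref{inthi}, applied to $D(\Qq)$, $Y^\ell$ and the equations $t_i|_{D(\Qq)}$, gives that $\Qq^\ell$ is a Nash manifold with corners with $\partial\Qq^\ell=Y^\ell\cap\Qq^\ell$, that the Nash closure of $\partial\Qq^\ell$ in $D(\Qq)$ is $Y^\ell$, and that $\Int(\Qq^\ell)=D(\Qq)\cap\{t_1>0,\dots,t_\ell>0\}$.

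It remains to prove (v) and that $\Qq^\ell\cong\Qq$. The idea is to apply Theorem \ref{embedding} $\ell$ times. Start with $M_0:=M$ (closed in $\R^n$, by Mostowski's trick), with the divisor $Y$ and equations $h_1,\dots,h_\ell$. Setting $\Hh_1:=\{h_1\ge0\}$, Theorem \ref{embedding} gives a Nash embedding $\phi_1:M_0\to D(\Hh_1)=:M_1$ onto an open neighborhood of $(\Hh_1)_+$, carrying $\Hh_1$ onto $(\Hh_1)_+$ and the remaining components $Y_j$ into $\pi_1^{-1}(Y_j\cap\Hh_1)$, where $\pi_1:M_1\to\Hh_1$ is the projection. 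On $M_1$ the functions $t_1|_{M_1}$ and $h_j\circ\pi_1$ ($j\ge2$) have surjective differentials on their zero sets and cut out a Nash normal-crossings divisor — this uses the local form $(y_1,\dots,y_d)\mapsto(y_1^2,y_2,\dots,y_d)$ of $\pi_1$ at points of $\{t_1=0\}$ together with the normal-crossings property of $Y$. Iterating with $\{h_2\circ\pi_1\ge0\}\subset M_1$, and so on, after $\ell$ steps one reaches exactly $D(\Qq)$ — at the $k$-th step one adjoins a coordinate $t_k$ with $t_k^2$ the pull-back of $h_k$ — together with a Nash embedding $\Phi:=\phi_\ell\circ\cdots\circ\phi_1:M\to D(\Qq)$ onto an open semialgebraic neighborhood $W$ of $\Qq^\ell$ with $\Phi(\Qq)=\Qq^\ell$ (the inclusion $\Phi(\Qq)\subseteq\Qq^\ell$ is checked inductively by tracking the signs of the $t_k$, using that each $\phi_k$ carries its half-space to $\{t_k\ge0\}$ and that, as in Step 4 of the proof of Theorem \ref{fold}, the intermediate folding maps keep $\Qq$ inside $\Qq$; the reverse inclusion then follows from the bijectivity of $\pi|_{\Qq^\ell}$). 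As $W$ is Nash diffeomorphic to $M$ this proves (v), and $\Phi|_\Qq:\Qq\to\Qq^\ell$ — the restriction of a Nash diffeomorphism, with $\Qq$ closed in $M$ and $\Qq^\ell$ closed in $W$ — is the Nash diffeomorphism in (ii).

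The main obstacle is this last step: checking, at each of the $\ell$ stages, that the pulled-back defining functions still cut out a Nash normal-crossings divisor with surjective differentials (so Theorem \ref{embedding} applies) and that the composite $\Phi$ has the asserted image and satisfies $\Phi(\Qq)=\Qq^\ell$; the bookkeeping of the divisor and of the signs of the square-root coordinates through the iteration is the real content, the other four assertions being essentially formal consequences of the explicit description of $D(\Qq)$. Alternatively one may avoid the iteration by starting from the folding map $f:\cl(W_\ell)\to\Qq$ of Theorem \ref{fold}, observing that $h_i\circ f$ is the square of a Nash function $s_i$ — because $h_i\circ f_i=\tau_i^2$ for the auxiliary embeddings $\phi_i=(f_i,\tau_i)$ and $h_i\circ f_j=(\text{nowhere-zero Nash})\cdot h_i$ for $j>i$ — and verifying directly that $\Phi:=(f,s_1,\dots,s_\ell):W_\ell\to D(\Qq)$ is a Nash diffeomorphism onto an open neighborhood of $\Qq^\ell$; with this route the delicate point is the injectivity of $d\Phi$ at points of $W_\ell\setminus\Qq$.
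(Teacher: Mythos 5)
Your proposal is correct and, for the substantive content of the theorem, follows the same route as the paper: the Nash diffeomorphism $\Qq\cong\Qq^\ell$ in (ii) and the neighborhood $W$ in (v) are obtained by applying Theorem \ref{embedding} $\ell$ times, doubling one facet at a time, which is exactly the paper's Steps 1--2 and the recursive steps. The difference lies in the peripheral items: the paper derives (i), (iii), (iv) and the normal-crossings structure of $Y^\ell$ as by-products of the recursion (each $\pi_k$ is proper by Lemma \ref{doubleii}(iv), each $\pi_k|_{\Qq^k}$ is a semialgebraic homeomorphism, each $\pi_k|_{\Int(\Qq^k)}$ a Nash diffeomorphism, and the composite inherits these properties), whereas you prove them directly from the explicit description of $D(\Qq)$ --- the global section $x\mapsto(x,\sqrt{h_1(x)},\dots,\sqrt{h_\ell(x)})$, the boundedness of fibers over compacta, and the tangent-space computation showing that the differentials of $t_{i_1}|_{D(\Qq)},\dots,t_{i_k}|_{D(\Qq)}$ are independent on their common zero set, after which Lemma \ref{inthi} yields the corner structure of $\Qq^\ell$. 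These shortcuts are correct and arguably cleaner than routing everything through the iteration. Be aware, however, that what you label ``the main obstacle'' is precisely where the paper spends most of its effort: verifying at each stage that the lifted equations $h_i^k$ have surjective differentials on $Y_i^k$ (a case analysis on whether $t_k$ vanishes), that $Y^k$ is again a Nash normal-crossings divisor of $M^k$ (the paper covers $M^k$ by $W^k$ and $\Int(\Hh_{k-}^{k-1})$, on each of which the divisor is carried diffeomorphically to a known one), and that $\phi_k(\Qq^{k-1})=\Qq^k$ (a connected-components argument exploiting that $\phi_k$ is close to $(\pi_k|_{\Hh_{k+}^{k-1}})^{-1}$). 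Your sketch identifies the right checks but does not carry them out, so as written the proof of (ii) and (v) is an outline rather than a complete argument.
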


\begin{proof}

The proof is conducted in several steps:

\noindent{\sc Step 1.} Let $(D(\Hh_1),\pi_1)$ be the Nash double of $\Hh_1:=\{h_1\geq0\}$. Recall that
$$
M^1:=D(\Hh_1)=\{(x,t_1)\in M\times\R:\ t_1^2-h_1(x)=0\}\quad\text{and}\quad\pi _1:D(\Hh_1)\to\Hh_1,\ (x,t_1)\to x.
$$
The Nash map $\pi_1$ is proper by Lemma \ref{doubleii}(iv). Consider the $(d-1)$-dimensional Nash submanifolds $Y_1^1:=Y_1\times\{0\}$ and $Y_i^1:=D(Y_i\cap\Hh_1)=(Y_i\times\R)\cap D(\Hh_1)$ (see \eqref{yi}) of $M^1$ for $i=2,\ldots,\ell$. Consider the Nash functions 
$$
h_i^1:M^1\to\R,\ (x,t_1)\mapsto h_i^1(x,t_1):=
\begin{cases}
t_1&\text{if $i=1$,}\\
h_i(x)&\text{if $i=2,\ldots,\ell$.}
\end{cases}
$$ 
The tangent space to $M^1$ at a point $(x,t_1)\in M^1$ is:
$$
T_{(x,t_1)}M^1=\{(v,s_1)\in T_xM\times\R:\ d_xh_1(v)-2t_1s_1=0\}.
$$
We claim: {\em $h_i^1:M^1\to\R$ is a Nash equation of $Y_i^1$ in $M^1$ such that $d_{(x,t_1)}h_i^1:T_{(x,t_1)}M^1\to\R$ is surjective for each $(x,t_1)\in Y_i^1$ and $i=1,\ldots,\ell$.}

Suppose first $i=1$. Then $\{h_1^1=0\}=\{h_1(x)=0,t_1=0\}=Y_1\times\{0\}=Y_1^1$, so $h_i^1$ is a Nash equation of $Y_i^1$ in $M^1$. In addition, for $(x,0)\in Y_1^1=Y_1\times\{0\}$ we have
$$
T_{(x,0)}M^1=\{(v,s_1)\in T_xM\times\R:\ d_xh_1(v)=0\}=T_xY_1\times\R
$$
and $d_{(x,0)}h_1^1:T_xY_1\times\R\to\R,\ (v,s_1)\mapsto s_1$ is surjective.

Suppose next $i=2,\ldots,\ell$ and observe that 
$$
Y_i^1=\{(x,t_1)\in M\times\R:\ t_1^2-h_1(x)=0, h_i(x)=0\}=\{(x,t_1)\in Y_i\times\R:\ t_1^2-h_1(x)=0\},
$$
so $h_i^1=0$ is a Nash equation of $Y_i^1$ in $M^1$. Recall that
$$
T_{(x,t_1)}M^1=\{(v,s_1)\in T_xM\times\R:\ d_xh_1(v)-2t_1s_1=0\}
$$
and $d_{(x,t_1)}h_i^1:T_{(x,t_1)}M^1\to\R,\ (v,s_1)\mapsto d_xh_i(v)$. Let us check that it is surjective:

\noindent{\sc Case 1:} $t_1\neq 0$. We pick a vector $v\in T_xM$ such that $d_xh_i(v)\neq 0$ and $(v,\frac{d_x h_1(v)}{2t_1})\in T_{(x,t_1)}M^1$. We have
$$
d_{(x,t_1)}h_i^1\Big(v,\frac{d_xh_1(v)}{2t_1}\Big)=d_xh_i(v)\neq 0.
$$

\noindent{\sc Case 2:} $t_1= 0$. We have $T_{(x,0)}M^1=\{(v,s)\in T_xM\times\R:\ d_xh_1(v)=0\}$. As $Y_1\cup Y_i$ is a normal-crossings divisor, $d_xh_1$ and $d_xh_i$ are linearly independent, so there exists $v\in T_xM$ such that $d_xh_i(v)\neq 0$ and $d_xh_1(v)=0$. Observe that $(v,0)\in T_{(x,0)}M^1$ and $d_{(x,t_1)}h_i^1(v,0)=d_xh_i^1(v)\neq 0$.

Consequently, $d_{(x,t_1)}h_i^1:T_{(x,t_1)}M^1\to\R$ is surjective (in both cases), as claimed.

By Theorem \ref{embedding} there exist an open semialgebraic neighborhood $N^0\subset M$ of $\Hh_1$ and a Nash embedding $\phi_1:N^0\to M^1$ such that $\phi_1(\Hh_1)=\Hh_{1+}$ and $W^1:=\phi_1(N^0)$ is an open semialgebraic subset of $M^1$ that contains the Nash manifold with corners $\phi_1(\Qq)$ as a closed subset. In addition, $\phi_1|_{Y_1}=\id_{Y_1\times\{0\}}$, $\phi_1(Y_i)\subset Y_i^1$ for $i=2,\ldots,\ell$ and $\phi_1|_{\Hh_1}:\Hh_1\to\Hh_{1+}$ is close to $(\pi_1|_{\Hh_{1+}})^{-1}$. Consider the Nash subset $Y^1:=\bigcup_{i=1}^\ell Y_i^1$ of $M^1$, whose Nash irreducible components are Nash submanifolds of $M^1$. We have $\phi_1(Y\cap N^0)=Y^1\cap W^1$. In addition, $\pi_{1-}|_{\Int(\Hh_{1-})}:\Int(\Hh_{1-})\to\Int(\Hh_1)$ is a Nash diffeomorphism and $\pi_{1-}(Y^1\cap\Int(\Hh_{1-}))=Y\cap\Int(\Hh_1)$. As $Y$ is a Nash normal-crossings divisor of $M$ and $M^1=D(\Hh_1)=W^1\cup\Int(\Hh_{1-})$, we conclude that $Y^1$ is a Nash normal-crossings divisor of $M^1$. 

Denote $\Hh_1^{1}:=\Hh_{1+}$ and $\Hh_i^{1}:=\pi_1^{-1}(\Hh_i)=\{h_i^1\geq0\}$ for $i=2,\ldots,\ell$. By Lemma \ref{inthi} the intersection
$$
\Qq^1:=\bigcap_{i=1}^\ell \Hh_i^{1}=\{(x,t_1)\in M\times\R: t_1^2-h_1(x)=0,t_1\geq0,h_2(x)\geq0,\ldots,h_\ell(x)\geq0\}\subset M^1
$$
is a Nash manifold with corners and the Nash closure in $M^1$ of $\partial\Qq^1$ is $Y^1$. Observe that $\pi_1(\Qq^1)=\Qq$, the restriction $\pi_1|_{\Qq^1}:\Qq^1\to\Qq$ is a semialgebraic homeomorphism and the restriction $\pi_1|_{\Int(\Qq^1)}:\Int(\Qq^1)\to\Int(\Qq)$ is a Nash diffeomorphism. We claim: $\phi_1(\Qq)=\Qq^1$. As $\phi_1$ is a Nash diffeomorphism, it is enough to prove: {\em $\phi_1(\Hh_1\cap\Hh_i)=\Hh_1^1\cap\Hh_i^1=\Hh_{1+}\cap\pi_1^{-1}(\Hh_i)$ for $i=2,\ldots,\ell$.} 

As $\phi_1:N^0\to W^1$ is a Nash diffeomorphism, it is enough to check
$$
\phi_1(\Int(\Hh_1)\cap\Int(\Hh_i))=\Int(\Hh_{1+})\cap\pi_1^{-1}(\Int(\Hh_i)).
$$
Observe that $\Int(\Hh_1)\cap\Int(\Hh_i)$ is a union of connected components of $N^0\setminus(Y_1\cup Y_i)$. In addition, $\phi_1(Y_1)=Y_1\times\{0\}$ and $\phi_1(Y_i)\subset D(Y_i\cap\Hh_1)$, so 
$$
\phi_1(Y_i\cap\Int(\Hh_1))\subset D(Y_i\cap\Hh_1)\cap\Int(\Hh_{1+}).
$$
As $\phi_1|_{\Int(\Hh_1)}:\Int(\Hh_1)\to\Int(\Hh_{1+})$ is a Nash diffeomorphism, $\phi_1(Y_i\cap\Int(\Hh_1))$ is a closed Nash submanifold of the Nash manifold $D(Y_i\cap\Hh_1)\cap\Int(\Hh_{1+})$ of the same dimension. As $\phi_1$ is close to $(\pi_1|_{\Hh_1,+})^{-1}$, both Nash manifolds have the same number of connected components and 
$$
\phi_1(Y_1\cap\Int(\Hh_1))=D(Y_i\cap\Hh_1)\cap\Int(\Hh_{1+}).
$$
Note that $\Int(\Hh_{1+})\cap\pi _1^{-1}(\Hh_i)$ is a union of connected components of $\Int(\Hh_{1+})\setminus D(Y_i\cap\Hh_1)$. As $\phi_1$ is close to $(\pi_1|_{\Hh_{1+}})^{-1}$, we conclude that the Nash diffeomorphism $\phi_1$ provides a bijection between the connected components of $\Int(\Hh_1)\cap\Int(\Hh_i)$ and the connected components of $\Int(\Hh_{1+})\cap\pi _1^{-1}(\Hh_i)$. Consequently, $\phi_1(\Int(\Hh_1)\cap\Int(\Hh_i))=\Int(\Hh_{1+})\cap\pi _1^{-1}(\Hh_i)$, as claimed.

Summarizing, the objects we have constructed in this step are the following: 
\begin{itemize}
\item The $d$-dimensional Nash manifold $M^1:=D(\Hh_1)$ and the projection $\pi_1:D(\Hh_1)\to\Hh_1,\ (x,t_1)\mapsto x$. %The restriction $\pi_1|_{\Qq^1}:\Qq^1\to\Qq$ is a semialgebraic homeomorphism and the restriction $\pi_1|_{\Int(\Qq^1)}:\Int(\Qq^1)\to\Int(\Qq)$ is a Nash diffeomorphism.
\item The Nash manifolds with boundary $\Hh_1^{1}:=\Hh_{1+}$ and $\Hh_i^{1}:=\pi_1^{-1}(\Hh_i)=\{h_i^1\geq0\}$ for $i=2,\ldots,\ell$. The boundary of $\Hh_1^{1}$ is $Y_1^1:=Y_1\times\{0\}$ and the boundary of $\Hh_i^{1}$ is $Y_i^1:=D(Y_i\cap\Hh_1)$ for $i=2,\ldots,\ell$.
\item The Nash normal-crossings divisor $Y^1:=\bigcup_{i=1}^\ell Y_i^1$ of $M^1$.
\item The Nash equations
$$
h_i^1:M^1\to\R,\ (x,t_1)\mapsto\begin{cases}
t_1&\text{if $i=1$}\\
h_i(x)&\text{if $i=2,\ldots,\ell$,}
\end{cases}
$$
which satisfy: $d_{(x,t_1)}h_i^1:T_{(x,t_1)}M^1\to\R$ is surjective for each $(x,t_1)\in Y_i^1$ and $i=1,\ldots,\ell$.
\item The Nash manifold with corners $\Qq^1:=\bigcap_{i=1}^\ell \Hh_i^1$, which is mapped by the projection $\pi_1$ onto $\Qq$. In addition, the restriction $\pi_1|_{\Qq^1}:\Qq^1\to\Qq$ is a semialgebraic homeomorphism and the restriction $\pi_1|_{\Int(\Qq^1)}:\Int(\Qq^1)\to\Int(\Qq)$ is a Nash diffeomorphism.
\item The open semialgebraic neighborhoods $N^0\subset M$ of $\Hh_1$ and $W^1\subset M^1$ of $\Qq^1$ and the Nash diffeomorphism $\phi_1:N^0\to W^1$ that maps $\Qq$ onto $\Qq^1$, $Y_1$ onto $Y_1^1$ and $Y_i$ inside $Y_i^1$ for $i=2,\ldots,\ell$.
\end{itemize}

\noindent{\sc Step 2.} Let $(D(\Hh_2^1),\pi_2)$ be the Nash double of $\Hh_2^1=\{h_2^1\geq0\}$. We have
\begin{multline*}
M^2:=D(\Hh_2^1)=\{(x,t_1,t_2)\in(M\times\R)\times\R:\ t_1^2-h_1(x)=0, t_2^2-h_2^1(x,t_1)=0\}=\\
\{(x,t_1,t_2)\in M\times\R^2:\ t_1^2-h_1(x)=0, t_2^2-h_2(x)=0\}
\end{multline*}
and $\pi_2:D(\Hh_2^1)\to\Hh_2^1,\ (x,t_1,t_2)\mapsto(x,t_1)$. The Nash map $\pi_2$ is proper by Lemma \ref{doubleii}(iv). Consider the $(d-1)$-dimensional Nash submanifolds $Y_1^2:=D(Y_1^1\cap\Hh_2^1)$, $Y_2^2=Y_2^1\times\{0\}$ and $Y_i^2:=D(Y_i^1\cap\Hh_2^1)=(Y_i^1\times\R)\cap D(\Hh_2^1)$ (see \eqref{yi}) of $M^2$ for $i=3,\ldots,\ell$. Define
$$
h_i^2:M^2\to\R,\ (x,t_1,t_2)\mapsto h_i^2(x,t_1,t_2)=
\begin{cases}
h_1^1(x,t_1)=t_1&\text{if $i=1$,}\\
t_2&\text{if $i=2$,}\\
h_i^1(x,t_1)=h_i(x)&\text{if $i=3,\ldots,\ell$.}\\
\end{cases}
$$
The tangent space to $M^2$ at a point $(x,t_1,t_2)\in M^2$ is:
$$
T_{(x,t_1,t_2)}M^1=\{(v,s_1,s_2)\in T_xM\times\R^2:\ d_xh_1(v)-2t_1s_1=0,d_xh_2(v)-2t_2s_2=0\}.
$$
The reader can check (analogously as we have done in {\sc Step 1}): {\em $h_i^2:M^2\to\R$ is a Nash equation of $Y_i^2$ in $M^2$ such that $d_{(x,t_1,t_2)}h_i^2:T_{(x,t_1,t_2)}M^2\to\R$ is surjective for each $(x,t_1,t_2)\in Y_i^2$ and $i=1,\ldots,\ell$.}

By Theorem \ref{embedding} there exist an open semialgebraic neighborhood $N^1\subset M^1$ of $\Hh_2^1$ and a Nash embedding $\phi_2:N^1\to M^2$ such that $\phi_2(\Hh_2^1)=\Hh_{2+}^1$ and $W^2:=\phi_2(N^1)$ is an open semialgebraic subset of $M^2$ that contains the Nash manifold with corners $\phi_2(\Qq^1)$ as a closed subset. In addition, $\phi_2|_{Y_2^1}=\id_{Y_2^1\times\{0\}}$, $\phi_2(Y_i^1)\subset Y_i^2$ for $i=1,3,\ldots,\ell$ and $\phi_2|_{\Hh_2^1}:\Hh_2^1\to\Hh_{2+}^1$ is close to $(\pi_2|_{\Hh_{2+}^1})^{-1}$. Consider the Nash subset $Y^2:=\bigcup_{i=1}^\ell Y_i^2$ of $M^2$, whose Nash irreducible components are Nash submanifolds of $M^2$. We have $\phi_2(Y^1\cap N^1)=Y^2\cap W^2$. In addition, $\pi_{2-}|_{\Int(\Hh_{2-}^1)}:\Int(\Hh_{2-}^1)\to\Int(\Hh_2^1)$ is a Nash diffeomorphism and $\pi_{2-}(Y^2\cap\Int(\Hh_{2-}^1))=Y^1\cap\Int(\Hh_2^1)$. As $Y^1$ is a Nash normal-crossings divisor of $M^1$ and $M^2=D(\Hh_2^1)=W^2\cup\Int(\Hh_{2-}^1)$, we conclude that $Y^2$ is a Nash normal-crossings divisor of $M^2$. 

Denote $\Hh_2^{2}:=\Hh_{2+}^1$ and $\Hh_i^{2}:=\pi_2^{-1}(\Hh_i^1)=\{h_i^2\geq0\}$ for $i=1,3,\ldots,\ell$. By Lemma \ref{inthi} the intersection
\begin{multline*}
\Qq^2:=\bigcap_{i=1}^\ell \Hh_i^{2}=\{(x,t_1,t_2)\in M\times\R^2: t_1^2-h_1(x)=0,t_2^2-h_2(x)=0,t_1\geq0,t_2\geq0,\\
h_3(x)\geq0,\ldots,h_\ell(x)\geq0\}\subset M^2
\end{multline*}
is a Nash manifold with corners and the Nash closure in $M^2$ of $\partial\Qq^2$ is $Y^2$. Observe that $\pi_2(\Qq^2)=\Qq^1$, the restriction $\pi_2|_{\Qq^2}:\Qq^2\to\Qq^1$ is a semialgebraic homeomorphism and the restriction $\pi_2|_{\Int(\Qq^2)}:\Int(\Qq^2)\to\Int(\Qq^1)$ is a Nash diffeomorphism. In particular, $(\pi_1\circ\pi_2)(\Qq^2)=\Qq$. The reader can check (analogously as we have done in {\sc Step 1}): $\phi_2(\Qq^1)=\Qq^2$. Consequently, $(\phi_2\circ\phi_1)(\Qq)=\Qq^2$.

\noindent{\sc Recursive steps.} 
We proceed recursively for $k=1,\ldots,\ell$ and we construct: 
\begin{itemize}
\item The $d$-dimensional Nash manifold $M^k$ and the projection 
$$
\pi_k:M^k\to\Hh_k^{k-1},\ (x,t_1,\ldots,t_k)\mapsto(x,t_1,\ldots,t_{k-1}),
$$
which is proper by Lemma \ref{doubleii}(iv).
\item The Nash manifolds with boundary $\Hh_i^k$ and the $(d-1)$-dimensional Nash submanifolds $Y_i^k=\partial\Hh_i^k$ of $M^k$. 
\item The Nash normal-crossings divisor $Y^k:=\bigcup_{i=1}^\ell Y_i^k$ of $M^k$. 
\item The Nash equations
$$
h_i^k:M^k\to\R,\ (x,t_1,\ldots,t_k)\mapsto\begin{cases}
t_i&\text{if $i=1,\ldots,k$}\\
h_i(x)&\text{if $i=k+1,\ldots,\ell$,}
\end{cases}
$$
which satisfy: $d_{(x,t_1,\ldots,t_k)}h_i^k:T_{(x,t_1,\ldots,t_k)}M^k\to\R$ is surjective for each $(x,t_1,\ldots,t_k)\in Y_i^k$ and $i=1,\ldots,\ell$.
\item The Nash manifold with corners contained in $M^k$
\begin{equation*}
\begin{split}
\Qq^k:=\bigcap_{i=1}^\ell \Hh_i^k=\{(x,t_1,\ldots,t_k)&\in M\times\R^k:\ t_1^2-h_1(x)=0,\ldots,t_k^2-h_k(x)=0,\\
&t_1\geq0,\ldots,t_k\geq0,h_{k+1}(x)\geq0,\ldots,h_\ell(x)\geq0\},
\end{split}
\end{equation*}
which is mapped by the projection $\pi_k$ onto $\Qq^{k-1}$. Consequently, $(\pi_1\circ\cdots\circ\pi_k)(\Qq^k)=\Qq$. In addition, the restriction $\pi_k|_{\Qq^k}:\Qq^k\to\Qq^{k-1}$ is a semialgebraic homeomorphism and the restriction $\pi_k|_{\Int(\Qq^k)}:\Int(\Qq^k)\to\Int(\Qq^{k-1})$ is a Nash diffeomorphism.
\item The open semialgebraic neighborhoods $N^{k-1}\subset M^{k-1}$ of $\Hh_k^{k-1}$ and $W^k\subset M^k$ of $\Qq^k$ and the Nash diffeomorphism $\phi_k:N^{k-1}\to W^k$ that maps $\Qq^{k-1}$ onto $\Qq^k$, $Y_k^{k-1}$ onto $Y_k^k$ and $Y_i^{k-1}$ inside $Y_i^k$ for $i\neq k$. Consequently, $(\phi_k\circ\cdots\circ\phi_1)(\Qq)=\Qq^k$.
\end{itemize}

At this point observe that 
$$
D(\Qq)=M^\ell=\{(x,t)\in M\times\R^\ell: t_1^2-h_1(x)=0,\ldots, t_\ell^2-h_\ell(x)=0\}
$$
is a $d$-dimensional Nash manifold and $\Qq^\ell=\{t_1\geq0,\ldots,t_\ell\geq0\}\subset D(\Qq)$ is a Nash manifold with corners. The Nash closure of the boundary
$$
\partial\Qq^\ell=\bigcup_{i=1}^\ell\{t_1\geq0,\cdots,t_{i-1}\geq0,t_i=0,t_{i+1}\geq0,\cdots,t_\ell\geq0\}
$$
is the Nash normal-crossings divisor $Y^\ell=\bigcup_{i=1}^\ell\{t_i=0\}$ of $D(\Qq)$. After shrinking $N^0$ if necessary, the composition $\phi:=\phi_\ell\circ\cdots\circ\phi_1:N^0\to W:=\phi(N^0)$ is a Nash diffeomorphism such that $W$ is an open semialgebraic neighborhood of $\Qq^\ell$ in $D(\Qq)$ and $\phi(\Qq)=\Qq^\ell$. The composition $\pi:=\pi_1\circ\cdots\circ\pi_\ell:D(\Qq)\to\Qq$ is a proper surjective Nash map such that the restriction $\pi|_{\Qq^\ell}:\Qq^\ell\to\Qq$ is a semialgebraic homeomorphism and the restriction $\pi|_{\Int(\Qq^\ell)}:\Int(\Qq^\ell)\to\Int(\Qq)$ is a Nash diffeomorphism, as required.
\end{proof}

\section{Applications}\label{s5}

In this section we prove several applications proposed in the introduction as consequences of \cite[Cor.1.10]{cf2} and Theorem \ref{fold}.

\subsection{Weak Nash uniformization of closed semialgebraic sets.}

We prove Theorem \ref{red3}.

\begin{proof}[Proof of Theorem \em \ref{red3}]
By \cite[Cor.1.10]{cf2} applied to the components of $\Ss$ connected by analytic paths and \cite[Rmk.II.1.5]{sh} to the strictly positive continuous semialgebraic functions $\veps_i$ we may assume that:
\begin{itemize}
\item $\Qq:=\Ss$ is a connected Nash manifolds with corners, which is closed in $\R^n$, (so $\Ss$ has only one component connected by analytic paths that is a closed semialgebraic subset of $\R^n$, use \cite[Main Thm.1.8]{fe4}).
\item The Zariski closure $X\subset\R^n$ of $\Qq$ is a $d$-dimensional non-singular real algebraic set.
\item The Zariski closure $Y$ of $\partial\Qq$ is a normal-crossings divisor of $X$.
\end{itemize}
If $\Ss$ is compact, we may assume in addition that $X$ is compact and connected.

By \cite[Thm.1.11, 1.12]{fgr} there exists an open semialgebraic neighborhood $U\subset X$ of $\Qq$ such that:
\begin{itemize}
\item The Nash closure $Y$ of $\partial\Qq$ in $U$ is a Nash normal-crossings divisor of $U$ and $\Qq\cap Y=\partial\Qq$.
\item For every $x\in\partial\Qq$ the analytic closure of the germ $\partial\Qq_x$ is $Y_x$.
\end{itemize}
Choose a strictly positive continuous semialgebraic function $\veps_0:X\to\R$ and an open semialgebraic neighborhood $U_1\subset X$ of $\Qq$ such that $\cl(U_1)\subset U$ (Lemma \ref{co}). In case $\Qq$ is compact, we may choose $\veps_0$ constant. Define $M:=\{x\in U_1:\ \dist(x,\Qq)<\veps_0(x)\}$, which is a Nash manifold such that $\Qq\subset M\subset\cl(M)\subset U$. 

By \cite[Thm.VI.2.1]{sh} and its proof there exists an ${\mathcal S}^1$ function $g:U\to\R$ such that $M$ is diffeomorphic to $g^{-1}((-1,+\infty))$ and $\Hh:=g^{-1}([0,+\infty))\subset M$ is an ${\mathcal S}^1$ manifold with boundary that contains $\Qq$ in its interior. In case $\Qq$ is compact, also $\Hh$ is compact. We may assume in addition that the differential $d_xg:T_xM\to\R$ is surjective for each $x\in g^{-1}(0)$ (see Lemma \ref{equation}). As $\cl(M)\subset U$ is a closed subset of $X$, there exists an ${\mathcal S}^1$ extension $G$ of $g|_{\cl(M)}$ to $X$ such that $G^{-1}(0)=g^{-1}(0)$. Let $H:X\to\R$ be a Nash approximation of $G$ in the ${\mathcal S}^1$ topology (in case $X$ is compact, we may assume $H$ is a polynomial). As $G$ is strictly positive on the closed semialgebraic set $\Qq$, we may assume $H$ is strictly positive on $\Qq$, so $H^{-1}(0)\subset M\setminus\Qq$. As we deal with ${\mathcal S}^1$ approximation, we may assume $d_xH:T_xX\to\R$ is surjective for each $x\in H^{-1}(0)$. Thus, $\Hh_{\veps_0}':=H^{-1}([0,+\infty))\subset M$ is a Nash manifold with (non-singular) boundary $Z_{\veps_0}:=H^{-1}(0)$, which is a Nash submanifold of $X$ of dimension $d-1$. In case $X$ is compact, also $\Hh_{\veps_0}'$ is compact and $Z_{\veps_0}:=H^{-1}(0)$ is a (compact) non-singular real algebraic subset of $X$. As $\Qq$ is connected, it is contained in one of the connected components $\Hh_{\veps_0}$ of $\Hh_{\veps_0}'$. Observe that $\Hh_{\veps_0}$ is a Nash manifold with boundary that contains $\Qq$ in its interior $\Int(\Hh_{\veps_0})$, which is connected. Thus, $\Int(\Hh_{\veps_0})$ is a connected component of $X\setminus Z_{\veps_0}$. 

In case $\Qq$ is non-compact and $Z_{\veps_0}$ is not a non-singular algebraic subset of $X$, we apply \cite[Lem.C.1]{fe3} to assume up to a suitable Nash embedding of $X$ in some affine space $\R^p$ that $X\subset\R^p$ is a non-singular algebraic set and there exists an (algebraic) normal-crossings divisor $Y_{\veps_0}$ such that the Nash submanifold $Z_{\veps_0}$ is a union of connected components of $Y_{\veps_0}$. By \cite[\S3.E.1, Cor.4.4]{fgh3} all the points of $Z_{\veps_0}$ are regular in the sense of \cite[\S3.A.3,\S3.B.3]{fgh3}. By \cite{bm} (which solves exactly the singularities of $Y_{\veps_0}\setminus Z_{\veps_0}$ and keep $X\setminus(Y_{\veps_0}\setminus Z_{\veps_0})$ invariant up to a biregular diffeomorphism) we may assume in addtion that $Y_{\veps_0}$ is a non-singular algebraic subset of $X$.

By Theorem \ref{fold} there exists a proper Nash map $f_0:\cl(M)\to\Qq$ such that $f_0(M)=f_0(\Qq)=\Qq$, the restriction $f_0|_{\Hh_{\veps_0}\setminus f_0^{-1}(\Tt_{\veps_0})}:\Hh_{\veps_0}\setminus f_0^{-1}(\Tt_{\veps_0})\to\Qq\setminus\Tt_{\veps_0}$ is a Nash diffeomorphism, where $\Tt_{\veps_0}:=\{x\in\Qq:\ \dist(x,\partial\Qq)<\veps_0\}$, because $\Hh_{\veps_0}\setminus f_0^{-1}(\Tt_{\veps_0})\subset\Int(\Qq)$ (after shrinking $M$ at the beginning if needed). As $\Qq\subset\Hh_{\veps_0}\subset M$, we have $f_0(\Hh_{\veps_0})=\Qq$. As $\Hh_{\veps_0}\subset\cl(M)$ is closed and $f_0:\cl(M)\to\Qq$ is proper, $f:=f_0|_{\Hh_{\veps_0}}:\Hh_{\veps_0}\to\Qq$ is also a proper map, as required. 
\end{proof}

\subsection{An alternative proof to Theorem \ref{main1}}\label{altcontru}
In order to give an alternative proof of Theorem \ref{main1} based on Theorem \ref{red3}, we need some preliminary results. We start with the following Lemma that extends \cite[Lem.2.8]{fe3} to Nash manifolds with boundary.

\begin{lem}\label{approxb}
Let $\Hh_1\subset\R^m$ and $\Hh_2\subset\R^n$ be Nash manifolds with (non-singular) boundary. Let $f:\Hh_1\to\Hh_2$ be a semialgebraic homeomorphism. Then every continuous semialgebraic map $g:\Hh_1\to\Hh_2$ close to $f$ in the ${\mathcal S}^0$ topology, such that $g(\partial\Hh_1)\subset\partial\Hh_2$, is surjective. 
\end{lem}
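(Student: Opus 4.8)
The plan is to reduce the surjectivity of $g$ to a degree-theoretic/topological argument using the Nash doubles $D(\Hh_1)$ and $D(\Hh_2)$ introduced in Lemma \ref{double}. The key point is that a continuous semialgebraic map $g\colon\Hh_1\to\Hh_2$ with $g(\partial\Hh_1)\subset\partial\Hh_2$ is in particular a map of pairs $(\Hh_1,\partial\Hh_1)\to(\Hh_2,\partial\Hh_2)$, so it induces a homomorphism on relative (singular, or Borel–Moore in the noncompact case) homology in top dimension $d=\dim\Hh_1=\dim\Hh_2$. First I would note that the hypothesis forces $\dim\Hh_1=\dim\Hh_2$: since $f$ is a semialgebraic homeomorphism it preserves dimension, and it also maps $\partial\Hh_1$ onto $\partial\Hh_2$ (the boundary being intrinsically characterized as the non-smooth locus), so $f$ restricts to a homeomorphism $\partial\Hh_1\to\partial\Hh_2$ as well.

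**Main steps.** Assume, for contradiction, that $g$ is not surjective, so there is a point $y_0\in\Hh_2\setminus g(\Hh_1)$. Since $g(\partial\Hh_1)\subset\partial\Hh_2$ we may even take $y_0\in\Int(\Hh_2)$ (if $g$ misses a boundary point it misses nearby interior points too, because $g(\Hh_1)$ is closed when $\Hh_1$ is compact, and in the noncompact case one argues componentwise after restricting to a compact piece as in the proof of Theorem \ref{red3}). Then $g$ factors through $\Hh_2\setminus\{y_0\}$, which retracts onto a subset of lower effective dimension relative to $\partial\Hh_2$; concretely, the relative fundamental class argument gives that $g_*$ is zero in $H_d(\Hh_2,\partial\Hh_2)$ (with $\Z/2$ coefficients, to avoid orientability issues). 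On the other hand $f$ being a homeomorphism of pairs has $f_*$ an isomorphism on $H_d(\cdot,\partial\cdot)$, so $f_*$ of the fundamental class is the generator. Since $g$ is ${\mathcal S}^0$-close to $f$, it is homotopic to $f$ \emph{as a map of pairs}: a straight-line homotopy in an ambient tubular neighborhood works on $\Int(\Hh_1)$, and near the boundary one uses the compatible Nash collar (Lemma \ref{style}) of $\partial\Hh_2$ inside $\Hh_2$ to push the homotopy so that it stays in $\partial\Hh_2$ over $\partial\Hh_1$ — this is where the closeness hypothesis is used quantitatively. Hence $g_*=f_*$ on $H_d(\cdot,\partial\cdot)$, contradicting $g_*=0\neq f_*$. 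Therefore $g$ is surjective.

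**Alternative via doubling.** An equivalent and perhaps cleaner route avoids relative homology: extend $f$ and $g$ to maps $\widetilde f,\widetilde g\colon D(\Hh_1)\to D(\Hh_2)$ between the Nash doubles. Since $f$ is a homeomorphism carrying $\partial\Hh_1$ to $\partial\Hh_2$ and both halves $\Hh_\pm$, it doubles to a homeomorphism $\widetilde f$; since $g$ is close to $f$ and respects boundaries, it doubles to a map $\widetilde g$ close to $\widetilde f$ (reflecting $g$ across $\partial$ using the collar as above), hence homotopic to the homeomorphism $\widetilde f$, hence of degree $\pm1$ (mod $2$) on the closed manifold $D(\Hh_2)$ — so $\widetilde g$ is surjective. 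Restricting to $\Hh_1=\Hh_{1,+}$, the image $\widetilde g(\Hh_{1,+})$ together with its mirror $\widetilde g(\Hh_{1,-})$ covers $D(\Hh_2)$; a symmetry/reflection argument then forces $g(\Hh_1)=\Hh_2$.

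**Expected main obstacle.** The delicate point is the passage from ``$g$ is ${\mathcal S}^0$-close to $f$'' to ``$g$ is homotopic to $f$ as a map of pairs'', uniformly and semialgebraically, especially in the noncompact case where there is no uniform tubular neighborhood. I expect to handle this by invoking the compatible Nash collar of $\partial\Hh_2$ from Lemma \ref{style} to define, for $\veps$ small enough, a canonical semialgebraic retraction of an $\veps$-neighborhood of $\Hh_2$ onto $\Hh_2$ that sends an $\veps$-neighborhood of $\partial\Hh_2$ to $\partial\Hh_2$, and then the straight-line homotopy composed with this retraction does the job. The noncompact case is then reduced to the compact one by exhausting $\Hh_1$ by compact Nash submanifolds with corners (as in the proof of Theorem \ref{red3}) and noting that surjectivity is a local-to-global closed condition on the image.
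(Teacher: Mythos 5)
Your ``alternative via doubling'' is exactly the paper's route: one glues $f$ and $g$ across the boundary to maps $F,G:D(\Hh_1)\to D(\Hh_2)$ between the Nash doubles (well-defined precisely because $g(\partial\Hh_1)\subset\partial\Hh_2$ and $\pi_{i\pm}$ agree on $\{t=0\}$), transfers the ${\mathcal S}^0$-closeness from $(f,g)$ to $(F,G)$ via the homeomorphisms $\pi_{i\epsilon}$, deduces surjectivity of $G$, and recovers surjectivity of $g$ from the definition of $G$. The divergence is in how surjectivity of $G$ is obtained. The paper does no degree theory and no homotopy at all: it simply invokes \cite[Lem.2.8]{fe3}, which already states that a continuous semialgebraic map between Nash manifolds (without boundary, possibly noncompact) that is ${\mathcal S}^0$-close to a semialgebraic homeomorphism is surjective; the only quantitative work is producing $\delta$ on $\Hh_1$ so that $\|f-g\|<\delta$ forces $\|F-G\|<\veps$ on $D(\Hh_1)$, which follows from \cite[Rmk.II.1.5]{sh}. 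Your version instead argues that $\widetilde g$ is homotopic to $\widetilde f$ and ``of degree $\pm1$ on the closed manifold $D(\Hh_2)$''.

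That degree step is where your argument has a genuine gap: the lemma is stated for arbitrary Nash manifolds with boundary, so $D(\Hh_2)$ need not be compact, and for a noncompact target a map has no well-defined degree unless it is proper --- and ${\mathcal S}^0$-closeness to a homeomorphism does not by itself give properness of $\widetilde g$ or a proper homotopy to $\widetilde f$ without further work (this is essentially the content of \cite[Lem.2.8]{fe3}, so re-proving it here defeats the reduction). The same objection applies to your primary route through $H_d(\Hh_2,\partial\Hh_2)$, which additionally carries the burden of constructing a semialgebraic homotopy of pairs and of working with Borel--Moore classes; your proposed fix by exhaustion (``surjectivity is a local-to-global closed condition on the image'') is not an argument as stated, since one must show that the globally prescribed $\veps$ yields, on each compact piece, closeness sufficient to cover a corresponding compact piece of the target. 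In the compact case both of your routes are fine; in general you should, as the paper does, quote the boundaryless surjectivity lemma rather than redo degree theory. One further small point: the reduction ``we may take $y_0\in\Int(\Hh_2)$'' in your main route is unnecessary in the paper's scheme and only adds case analysis.
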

\begin{proof}
As $f:\Hh_1\to\Hh_2$ is a semialgebraic homeomorphism, $f(\partial\Hh_1)=\partial\Hh_2$ (use invariance of domain). Consider the Nash doubles $(D(\Hh_i),\pi_i)$ and $\Hh_{i,\epsilon}=D(\Hh_i)\cap\{\epsilon t\geq0\}$ where $\epsilon=\pm$. Recall that $\pi_{i\epsilon}:\Hh_{i,\epsilon}\to\Hh_i$ is a semialgebraic homeomorphism.
As $f(\partial\Hh_1)=\partial\Hh_2$, the map
$$
F:D(\Hh_1)\to D(\Hh_2),\ (x,t)\mapsto\begin{cases}
(\pi_{2+})^{-1}\circ f\circ\pi_{1+}(x,t)&\text{ if } t\geq0,\\
(\pi_{2-})^{-1}\circ f\circ\pi_{1-}(x,t)&\text{ if } t\leq 0,
\end{cases}
$$
is well-defined and semialgebraic. Observe that $F$ is bijective and
$$
F^{-1}:D(\Hh_2)\to D(\Hh_1),\ (y,s)\mapsto\begin{cases}
(\pi_{1+})^{-1}\circ f^{-1}\circ\pi_{2+}(y,s)&\text{ if } s\geq0,\\
(\pi_{1-})^{-1}\circ f^{-1}\circ\pi_{2-}(y,s)&\text{ if } s\leq 0.
\end{cases}
$$
Let us check: {\em $F$ is continuous}. Once this is done, the same proof shows that $F^{-1}$ is continuous, so $F$ is a semialgebraic homeomorphism. 

As $F$ is continuous on both $\Hh_{1,+}$, $\Hh_{1,-}$ and 
$
\pi_1|_{\Hh_{1+}\cap\Hh_{1-}}=\pi_1|_{\partial\Hh_1}=\id_{\partial\Hh_1},
$
we conclude by the pasting lemma that $F$ is continuous on $D(\Hh_1)$.

Let $g:\Hh_1\to\Hh_2$ be a continuous semialgebraic map. If $g(\partial\Hh_1)\subset\partial\Hh_2$, the map
$$
G:D(\Hh_1)\to D(\Hh_2),\ (x,t)\mapsto 
\begin{cases}
(\pi_{2+})^{-1}\circ g\circ\pi_{1+}(x,t)&\text{if $t\geq0$}\\
(\pi_{2-})^{-1}\circ g\circ\pi_{1-}(x,t)&\text{if $t\leq0$}
\end{cases}
$$
is well-defined, continuous and semialgebraic for $\epsilon=\pm$. The proof is analogous to the one for $F$. Next, by \cite[Lem.2.8]{fe3} there exists a strictly positive continuous semialgebraic function $\veps:D(\Hh_1)\to\R$ such that if $\|G-F\|<\veps$, then $G$ is surjective. 

For $\epsilon=\pm$, consider the strictly positive continuous semialgebraic functions 
$$
\veps_{\epsilon}:=\veps\circ\pi_{1\epsilon}^{-1}:\Hh_1\to\R
$$ 
and $\veps^*=\min\{\veps_+,\veps_-\}$. Let $\delta:\Hh_1\to\R$ be a strictly positive continuous semialgebraic function such that if $g:\Hh_1\to\Hh_2$ is a continuous semialgebraic map such that $\|f-g\|<\delta$, then $\|(\pi_{2\epsilon})^{-1}\circ f-(\pi_{2\epsilon})^{-1}\circ g\|<\veps^*$ for $\epsilon=\pm$ (see \cite[Rmk.II.1.5]{sh}). Thus, $\|(\pi_{2\epsilon})^{-1}\circ f\circ\pi_{1\epsilon}-(\pi_{2\epsilon})^{-1}\circ g\circ\pi_{1\epsilon}\|<\veps^*\circ\pi_{1\epsilon}\leq\veps$ for $\epsilon=\pm$. Consequently, if $\|f-g\|<\delta$, then $\|F-G\|<\veps$, so $G$ is surjective. Taking into account the definition of $G$, we conclude that $g$ is also surjective, as required.
\end{proof}

\subsubsection{Preliminary reductions to prove Theorem \em \ref{main1}}
The proof of Theorem \ref{main1} requires some additional preparation. As the image of a compact semialgebraic set connected by analytic paths under a Nash map is a compact semialgebraic set connected by analytic paths \cite[Main Thm.1.4]{fe3}, we are reduced to prove: \em If $\Ss\subset\R^n$ is a compact semialgebraic set connected by analytic paths, there exists a Nash map $f:\R^d\to\R^n$ such that $f(\ol{\Bb}_d)=\Ss$\em. 

By Theorem \ref{red3} we may assume that $\Hh:=\Ss$ is a connected compact Nash manifold with smooth boundary. Let $M:=D(\Hh)$ be the Nash double of $\Hh$ (which is a connected compact Nash manifold) and consider the proper surjective Nash map $\pi:D(\Hh)\to\Hh$ introduced in Lemma \ref{doubleii}. Thus, we may assume that $M:=\Ss$ is a connected compact Nash manifold. Alternatively, we may use \cite[Cor.1.10]{cf2} and Theorem \ref{ndnmwc} to obtain a similar reduction.

Consider the simplex $\Delta_d:=\{\x_1\geq0,\ldots,\x_d\geq0,\x_1+\cdots+\x_d\leq1\}\subset\R^d$. In \cite[Cor.2.8]{fu6} we proved that the closed ball $\ol{\Bb}_d$ is a polynomial image of the simplicial prism $\Delta_{d-1}\times[0,1]$. Thus, the proof of Theorem \ref{main1} is reduced to show the following: \em If $M\subset\R^n$ is a connected compact Nash manifold, there exists a Nash map $f:\R^d\to\R^n$ such that $f(\sigma\times[a,b])=M$, where $\sigma\subset\R^{d-1}$ is a $(d-1)$-dimensional simplex and $[a,b]\subset\R$ a compact interval\em. 

Let $\{U_i\}_{i=1}^r$ be a finite covering of $M$ equipped with Nash diffeomorphisms $u_i:U_i\to\R^d$ such that $M=\bigcup_{i=1}^mu_i^{-1}(\Delta_d)$. Define 
$$
\Delta_d':=\{\x_1\geq-2,\ldots,\x_d\geq-2,\x_1+\cdots+\x_d\leq d+1\}
$$
and observe that $\Delta_d\subset\ol{\Bb}_d\subset\Int(\Delta_d')$. Let $\psi:\R^d\to\R^d$ be an affine isomorphism such that $\psi(\Delta_d)=\Delta_d'$. 
 
\begin{lem}\label{simp3}
Consider the $(d-1)$-simplex $\sigma:=\{\x_1\geq0,\ldots,\x_{d-1}\geq0,\x_1+\cdots+\x_{d-1}\leq1\}\subset\R^{d-1}$. Then the surjective Nash map $f:\sigma\times[0,1]\to\Delta_d',\ (x,t)\mapsto\psi(((1-t)x,t))$ restricts to a Nash diffeomorphism $f|_{\sigma\times[0,1)}:\sigma\times[0,1)\to\Delta_d'\setminus\{\psi((0,\ldots,0,1))\}$.
\end{lem}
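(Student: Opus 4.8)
The plan is to factor $f$ through $\psi$ and reduce everything to the polynomial map
$$
g:\sigma\times[0,1]\longrightarrow\R^d,\qquad (x,t)=(x_1,\ldots,x_{d-1},t)\longmapsto\bigl((1-t)x_1,\ldots,(1-t)x_{d-1},t\bigr),
$$
so that $f=\psi\circ g$. Since $\psi$ is an affine automorphism of $\R^d$ carrying $\Delta_d$ bijectively onto $\Delta_d'$ (hence carrying $\Delta_d\setminus\{p\}$ Nash-diffeomorphically onto $\Delta_d'\setminus\{\psi(p)\}$, where $p:=(0,\ldots,0,1)$), it suffices to prove: (a) $g(\sigma\times[0,1])=\Delta_d$; and (b) $g$ restricts to a Nash diffeomorphism $g|_{\sigma\times[0,1)}\colon\sigma\times[0,1)\to\Delta_d\setminus\{p\}$. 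Granting (a) and (b), the two assertions of the lemma follow by composing with $\psi$, the Nash character of $f=\psi\circ g$ being clear since $g$ is polynomial and $\psi$ affine.

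For (a), observe that for $(x,t)\in\sigma\times[0,1]$ the point $g(x,t)$ has nonnegative first $d-1$ coordinates (as $1-t\ge0$ and $x_i\ge0$) and $\sum_{i<d}(1-t)x_i+t=(1-t)\sum_{i<d}x_i+t\le(1-t)+t=1$, so $g(\sigma\times[0,1])\subset\Delta_d$. Conversely, given $y=(y_1,\ldots,y_d)\in\Delta_d$, set $t:=y_d\in[0,1]$: if $t=1$ then $y_1=\cdots=y_{d-1}=0$ and $y=p=g(x,1)$ for any $x\in\sigma$, while if $t<1$ the point $x:=(y_1/(1-t),\ldots,y_{d-1}/(1-t))$ satisfies $x_i\ge0$ and $\sum_{i<d}x_i=\bigl(\sum_{i<d}y_i\bigr)/(1-y_d)\le(1-y_d)/(1-y_d)=1$, hence $x\in\sigma$ and $g(x,t)=y$. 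This proves $g$ is surjective onto $\Delta_d$; and since $g(x,t)$ always has last coordinate $t$, the same computation shows the sharper fact that $g(\sigma\times[0,1))=\Delta_d\cap\{y_d<1\}=\Delta_d\setminus\{p\}$.

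For (b), injectivity of $g$ on $\sigma\times[0,1)$ is immediate: $g(x,t)=g(x',t')$ forces $t=t'$ and then $(1-t)(x-x')=0$ with $1-t>0$, so $x=x'$. For the inverse, consider the rational map
$$
r:\R^d\dasharrow\R^d,\qquad (y_1,\ldots,y_d)\longmapsto\Bigl(\frac{y_1}{1-y_d},\ldots,\frac{y_{d-1}}{1-y_d},y_d\Bigr),
$$
whose only pole lies on the hyperplane $\{y_d=1\}$; thus $r$ is regular, hence Nash, on the open semialgebraic set $\{y_d<1\}$, which contains $\Delta_d\setminus\{p\}$ by the description of the image in (a). A direct check gives $r\circ g=\id$ on $\sigma\times[0,1)$ and $g\circ r=\id$ on $\Delta_d\setminus\{p\}$, so $g|_{\sigma\times[0,1)}$ is a bijective Nash map with Nash inverse $r|_{\Delta_d\setminus\{p\}}$, i.e.\ a Nash diffeomorphism onto $\Delta_d\setminus\{p\}$. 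Post-composing with the affine isomorphism $\psi$ gives the Nash diffeomorphism $f|_{\sigma\times[0,1)}\colon\sigma\times[0,1)\to\Delta_d'\setminus\{\psi(p)\}$ asserted in the statement.

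I do not expect a serious obstacle here. The only points needing a little care are checking that $(y_1/(1-y_d),\ldots,y_{d-1}/(1-y_d))$ genuinely lies in $\sigma$ (which uses precisely the defining inequality $\sum_{i<d}y_i\le 1-y_d$ of $\Delta_d$), and tracking that the restricted image is exactly $\Delta_d\setminus\{p\}$, so that the rational map $r$---whose only singular locus is $\{y_d=1\}$---is regular on a semialgebraic neighborhood of the target of $g|_{\sigma\times[0,1)}$.
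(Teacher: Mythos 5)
Your proof is correct and follows essentially the same route as the paper: reduce via the affine isomorphism $\psi$ to the map $(x,t)\mapsto((1-t)x,t)$ on $\sigma\times[0,1)$ and exhibit its explicit Nash inverse $(y_1,\ldots,y_d)\mapsto(y_1/(1-y_d),\ldots,y_{d-1}/(1-y_d),y_d)$. You simply supply more detail (surjectivity onto $\Delta_d$ and the image computation) than the paper, which states these steps as straightforward.
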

\begin{proof}
As $\psi$ is an affine isomorphism such that $\psi(\Delta_d)=\Delta_d'$, it is enough to prove that the surjective Nash map $\sigma\times[0,1]\to\Delta_d,\, (x,t)\mapsto((1-t)x,t)$ restricts to a Nash diffeomorphism 
$$
\varphi:\sigma\times[0,1)\to\Delta_d\setminus\{(0,\ldots,0,1)\}.
$$
This is straightforward, because the Nash map 
$$
\phi:\Delta_d\setminus\{(0,\ldots,0,1)\}\to\sigma\times[0,1),\ (x_1,\ldots,x_d)\mapsto\Big(\frac{x_1}{1-x_d},\ldots,\frac{x_{d-1}}{1-x_d},x_d\Big)
$$
is the inverse of $\varphi$.
\end{proof}

We are ready to prove Theorem \ref{main1}.

\subsubsection{Proof of Theorem \em \ref{main1}}

We keep the notations quoted above. In view of the contruction made in \cite[\S2.3]{cf1}, to obtain a global Nash map $f:\R^d\to\R^n$ such that $f(\sigma\times[0,2r-1])=M$, it is enough to prove: {\em The connected compact Nash manifold $M\subset\R^n$ is the image of $\sigma\times[0,2r-1]$ under a Nash map $f:\sigma\times[0,2r-1]\to\R^n$}.

Let us construct first a surjective continuous semialgebraic function 
$$
h:\sigma\times[0,2r-1]\to M=\bigcup_{i=1}^ru_i^{-1}(\Delta_d)=\bigcup_{i=1}^ru_i^{-1}(\ol{\Bb}_d)=\bigcup_{i=1}^ru_i^{-1}(\Delta_d')
$$
that is Nash on $\sigma\times(\bigcup_{i=1}^r(2(i-1),2(i-1)+1)$ and satisfies 
$$
u_i^{-1}(\ol{\Bb}_d)\subset h(\Delta_d\times(2(i-1),2(i-1)+1)),
$$ 
for each $i=1,\ldots,r$. 

Let $f:\sigma\times[0,1]\to\Delta_d'$ be the surjective Nash map introduced in Lemma \ref{simp3} and define 
$$
h_i:\sigma\times[2(i-1),2(i-1)+1]\to u_i^{-1}(\Delta_d'),\ (x,t)\mapsto u_i^{-1}(f(x,t-2(i-1))),
$$
which is a surjective Nash map whose restriction to $\sigma\times\{2(i-1)+1\}$ is constant and its restriction to $\sigma\times[2(i-1),2(i-1)+1)$ is a Nash diffeomorphism onto its image. Write $\{p_i\}:=h_i(\sigma\times\{2(i-1)+1\})$. Let $\alpha_i:[2(i-1)+1,\tfrac{1}{2}+2(i-1)+1]\to M$ be a continuous semialgebraic map such that $\alpha_i(2(i-1)+1)=p_i$ and $\alpha_i(\tfrac{1}{2}+2(i-1)+1)=u_{i+1}^{-1}(0)$. Consider the continuous semialgebraic map
$$
g_i:\sigma\times[2(i-1)+1,\tfrac{1}{2}+2(i-1)+1]\to M,\ (x,t)\mapsto\alpha_i(t).
$$
Let $b_1,\ldots,b_d$ be the vertices of $\Delta_d'$ different from $\psi(0,\ldots,0,1)$. Define $x_d:=1-x_1-\cdots-x_{d-1}$ and consider the continuous semialgebraic map 
\begin{align*}
g_i':\sigma\times\Big[\tfrac{1}{2}+2(i-1)+1,2(i-1)+2\Big]&\to\R^d,\\
 (x,t)&\mapsto u_{i+1}^{-1}\Big(\sum_{i=1}^dx_ib_i2\Big(t-\tfrac{1}{2}-2(i-1)-1\Big)\Big),
\end{align*}
which satisfies $g_i'(x,\tfrac{1}{2}+2(i-1)+1)=u_{i+1}^{-1}(0)$ and $g_i'(x,2(i-1)+2)=u_{i+1}^{-1}(\sum_{i=1}^dx_ib_i)$.

Define $h:\sigma\times[0,2r-1]\to M$, as
$$
(x,t)\mapsto\begin{cases}
h_i(x,t)&\text{if $(x,t)\in\sigma\times[2(i-1),2(i-1)+1]$,}\\
g_i(x,t)&\text{if $(x,t)\in\sigma\times[2(i-1)+1,\tfrac{1}{2}+2(i-1)+1]$,}\\
g_i'(x,t)&\text{if $(x,t)\in\sigma\times[\tfrac{1}{2}+2(i-1)+1,2(i-1)+2]$,}\\
h_r(x,t)&\text{if $(x,t)\in\sigma\times[2(r-1),2r-1]$}
\end{cases}
$$ 
for $i=1,\ldots,r-1$. Observe that $h$ is a continuous semialgebraic map such that for each $i=1,\ldots,r$:
\begin{itemize}
\item the restriction $h|_{\sigma\times(2(i-1),2(i-1)+1)}$ is a Nash diffeomorphism onto its image.
\item $h(\sigma\times[2(i-1),2(i-1)+1])=u_i^{-1}(\Delta_d')$. 
\item $u_i^{-1}(\ol{\Bb}_d)\subset u_i^{-1}(\Int(\Delta_d'))\subset h(\sigma\times(2(i-1),2(i-1)+1))$.
\end{itemize} 
We conclude that the restriction of $h$ to $\sigma\times(\bigcup_{i=1}^r(2(i-1),2(i-1)+1)$ is Nash and surjective, because $M=\bigcup_{i=1}^mu_i^{-1}(\ol{\Bb}_d)$. In addition, $h_i^{-1}(u_i^{-1}(\ol{\Bb}_d))\subset\Int(\sigma)\times(2(i-1),2(i-1)+1)$, so the sets $h_i^{-1}(u_i^{-1}(\ol{\Bb}_d))$ (for $i=1,\ldots,r$) are pairwise disjoint. Let $\Hh:\R^d\to M$ be a continuous semialgebraic extension of $h$ to $\R^d$. Define $X_1:=\bigcup_{i=1}^r(u_i\circ h_i)^{-1}(\partial \ol{\Bb}_d)$, which is a Nash subset of $\R^d$ contained in $\Omega:=\Int(\sigma)\times\bigcup_{i=1}^r(2(i-1),2(i-1)+1)$ and observe that $\Hh$ is Nash on the open semialgebraic set $\Omega$. By \cite[Thm.II.5.2]{sh} there exists a Nash map $F:\R^d\to M$ close to $\Hh$ such that $F|_{(u_i\circ h_i)^{-1}(\partial\ol{\Bb}_d)}=H|_{(u_i\circ h_i)^{-1}(\partial\ol{\Bb}_d)}$ for $i=1,\ldots,r$. As the restriction $\Hh|_{(u_i\circ h_i)^{-1}(\ol{\Bb}_d)}=h_i|_{h_i^{-1}(u_i^{-1}(\ol{\Bb}_d))}:h_i^{-1}(u_i^{-1}(\ol{\Bb}_d))\to u_i^{-1}(\ol{\Bb}_d)$ is by Lemma \ref{simp3} a Nash diffeomorphism, the restriction $F|_{(u_i\circ h_i)^{-1}(\ol{\Bb}_d)}$ is by Lemma \ref{approxb} surjective for $i=1,\ldots,r$. Thus, $M=\bigcup_{i=1}^mu_i^{-1}(\ol{\Bb}_d)\subset F(\sigma\times[0,2r-1])\subset M$, so $F(\sigma\times[0,2r-1])=M$, as required.
\qed

\subsection{Nash structure of orientable compact smooth manifolds}\label{surfaces}
It is well-known that the compact orientable surface of genus $g$ admits a Nash model. We show in the following example how to deduce straightforwardly this fact from Theorem \ref{ndnmwc}.

\begin{example}
Let $\Pp_n\subset\R^2$ be a convex polygon with $n$ edges. As $\Pp_n$ is convex, there exist polynomials $h_1,\ldots,h_n\in\R[\x,\y]$ of degree 1 such that $\Pp_{n}=\{h_1\geq0,\ldots,h_{n}\geq0\}$. Fix $2+\frac{1}{2}(1-(-1)^n)\leq s\leq n$ and let $\Jj:=\{J_k\}_{k=1}^s$ be a partition of the set $\{1,\ldots,n\}$ such that
\begin{equation}\label{compatibili}
\Pp_n\cap\{h_i=0\}\cap\{h_j=0\}=\varnothing 
\end{equation}
for each $i,j\in J_k$ with $i\neq j$. For each $k=1,\ldots,s$ define the polynomial $h_{J_k}:=\prod_{i\in J_k} h_i$. As the partition $\Jj$ satisfies \eqref{compatibili}, it holds that $\Pp_n$ is a connected component of the semialgebraic set $\{x\in\R^2:\ h_{J_1}(x)\geq0,\ldots,h_{J_s}(x)\geq0\}$. Thus, 
$$
D_s(\Pp_n):=\{(x,t)\in\Pp_n\times\R^s:\ t_1^2-h_{J_1}(x)=0,\ldots,t_s^2-h_{J_s}(x)=0\}\subset\R^{s+2}
$$
is a connected compact Nash surface, which is in addition a connected component of the (maybe singular) real algebraic set
$$
X_{s,n}=\{(x,t)\in\R^2\times\R^s:\ t_1^2-h_{J_1}(x)=0,\ldots,t_s^2-h_{J_s}(x)=0\}\subset\R^{s+2}.
$$
We claim: $D_s(\Pp_n)\subset\Sth(X_{s,n})$. 

Pick $(x,t)\in D_s(\Pp_n)$ and consider the Jacobian matrix
$$
J_{s,n}(x,t):=\begin{pmatrix}
2t_1&0&\cdots&0&\frac{\partial h_{J_1}}{\partial\x_1}(x)&\frac{\partial h_{J_1}}{\partial\x_2}
(x)\\
0&2t_2&\cdots&0&\frac{\partial h_{J_2}}{\partial\x_1}(x)&\frac{\partial h_{J_2}}{\partial\x_2}
(x)\\
\vdots&\vdots&\ddots&\vdots&\vdots&\vdots\\
0&0&\cdots&2t_s&\frac{\partial h_{J_s}}{\partial\x_1}(x)&\frac{\partial h_{J_s}}{\partial\x_2}
(x)
\end{pmatrix},
$$
which has rank $\leq s$. As $(x,t)\in D_s(\Pp_n)$, then $x\in\Pp_n$ and there exists at most two indices $k,\ell$ such that $x\in\{h_{J_k}=0,h_{J_\ell}=0\}$. If such is the case, the vectors $(\frac{\partial h_{J_k}}{\partial\x_1}(x),\frac{\partial h_{J_k}}{\partial\x_2}(x))$ and $(\frac{\partial h_{J_\ell}}{\partial\x_1}(x),\frac{\partial h_{J_\ell}}{\partial\x_2}(x))$ are linearly independent. If there exists only one index $k$ such that $x\in\{h_{J_k}=0\}$, the vector $(\frac{\partial h_{J_k}}{\partial\x_1}(x),\frac{\partial h_{J_k}}{\partial\x_2}(x))$ is non-zero because the partition $\Jj$ satisfies \eqref{compatibili}. As $t_k^2-h_{J_k}(x)=0$ for $k=1,\ldots,s$, the matrix $J_{s,n}(x,t)$ has rank $s$, so $(x,t)\in\Sth(X_{s,n})$, as claimed.

As $D_s(\Pp_n)$ is obtained recursively by doubling orientable Nash manifolds with boundary, $D_s(\Pp_n)$ is an orientable Nash surface. By the (smooth) classification of surfaces (see for instance \cite[Ch.9]{hir3}) $D_s(\Pp_n)$ is diffeomorphic to a connected sum of $g$ tori and the genus $g$ completely characterize its diffeomorphism class. 

For each $\veps:=(\veps_1,\ldots,\veps_s)\in\{-1,1\}^s$ the set $D_s(\Pp_n)\cap\{\veps_1t_1\geq0,\ldots,\veps_st_s\geq0\}$ is Nash diffeomorphic to $\Pp_n$. Thus, $D_s(\Pp_n)$ is obtained (topologically) by glueing $2^s$ copies of $\Pp_n$, one for each choice of $\veps\in\{-1,1\}^s$. The polygon $\Pp_n$ has a (natural) structure of CW complex with $n$ vertices, $n$ edges and 1 face. This CW complex structure induces a CW complex structure on $D_s(\Pp_n)$ with $2^{s-2}n$ vertices (because each vertex belongs exactly to 4 polygons of the CW complex), $2^{s-1}n$ edges (because each edge belongs exactly to 2 polygons of the CW complex) and $2^s$ faces. The Euler characteristic of $D_s(\Pp_n)$ is $2^{s-2}n-2^{s-1}n+2^s=2^{s-2}(4-n)$ and its genus is $g=\frac{2-2^{s-1}(4-n)}{2}=2^{s-3}(n-4)+1$, that is, $D_s(\Pp_n)$ is diffeomorphic to the connected sum of $g=2^{s-3}(n-4)+1$ tori, whenever the number $2^{s-3}(n-4)+1$ is a non-negative integer. In particular, the compact orientable surface of genus $g\geq1$ is diffeomorphic to the Nash surfaces $D_2(\Pp_{2g+2})\subset\R^4$ and $D_3(\Pp_{g+3})\subset\R^5$. Note that the genus $g$ of the surface $D_s(\Pp_n)$ depends both on $n$ and $s$ (see Table \ref{superfici}).

\begin{table}[!ht]
\centering
\begin{tabular}{|c|a|b|c|c|c|c|c|}
\hline
 & $s=2$ & $s=3$ & $s=4$ & $s=5$ & $s=6$ & $s=7$\\
\hline
$n=3$ & -- & 0 & -- & -- & -- & --\\
\hline
$n=4$ & 1 &1 & 1 & -- & -- & --\\
\hline
$n=5$ & -- &2 & 3 & 5 & -- & --\\
\hline
$n=6$ & 2 & 3 & 5 & 9 & 17 & --\\
\hline
$n=7$ & -- & 4 & 7 & 13 & 25 & 49\\
\hline
\end{tabular}
%\vspace*{5mm} 
\caption{\small{The genus of the Nash surface $D_s(\Pp_n)$ for $n\leq 7$.}}
\label{superfici}
\end{table}

By Lemma \ref{cnma} there exist irreducible non-singular real algebraic sets $X$ with at most two connected components that are both Nash diffeomorphic to either $D_2(\Pp_{2g+2})\subset\R^4$ or $D_3(\Pp_{g+3})\subset\R^5$. Consequently, our procedure provides algebraic sets with at most two connected components (mutually diffeomorphic) such that their connected components are models for the orientable compact smooth surfaces.
\hfill$\sqbullet$
\end{example}

\subsection{Nash approximation.}
We make use of Theorem \ref{ndnmwc} to prove Theorem \ref{approxn}. Let $\Qq\subset\R^n$ be a Nash manifold with corners, let $M\subset\R^n$ be a Nash envelope of $\Qq$ and $Y\subset M$ a Nash normal-crossings divisor such that $\Qq\cap Y=\partial\Qq$. Let $h_1,\ldots,h_\ell\in{\mathcal N}(M)$ be Nash equations for the irreducible components $Y_1,\ldots,Y_\ell$ of $Y$, that is, $Y_i:=\{h_i=0\}$ and $d_xh_i:T_xM\to\R$ is surjective for each $x\in Y_i$. We have $\Qq=\{h_1\geq0,\ldots, h_\ell\geq0\}$ and consider the Nash double of $\Qq$, that is, the Nash manifold $D(\Qq):=\{(x,t)\in M\times\R^\ell:\ t_1^2-h_1(x)=0,\ldots,t_\ell^2-h_\ell(x)=0\}$ together with the projection $\pi:D(\Qq)\to M, (x,t)\mapsto x$. It holds $\pi(D(\Qq))=\Qq$ and if we define $\pi_+^{-1}:\Qq\to D(\Qq),\ x\mapsto(x,\sqrt{h_1(x)},\ldots,\sqrt{h_\ell(x) })$, then $\pi_+^{-1}$ is a semialgebraic homeomorphism onto its image $\Qq^+:=D(\Qq)\cap\{t_1\geq0,\ldots,t_\ell\geq0\}$ such that $\pi_+^{-1}\circ\pi |_{\Qq^+}=\id_{\Qq^+}$ and $\pi\circ\pi _+^{-1}=\id_{\Qq}$.

\begin{proof}[Proof of Theorem \em \ref{approxn}]
By \cite{dk} there exist an open semialgebraic neighborhood $U\subset\R^m$ of $\Ss$ (in which $\Ss$ is closed) and a semialgebraic retraction $\nu:U\to\Ss$. Consider the continuous semialgebraic function $F:=\pi_+^{-1}\circ f\circ\nu:U\to D(\Qq)$. Let $\veps_0:\Ss\to\R$ be a strictly positive continuous semialgebraic function and let $\veps:U\to\R$ be a strictly positive continuous semialgebraic extension of $\veps_0$ to $U$. We consider the following commutative diagram:
$$
\xymatrix{
U\ar[r]^{\hspace{-1mm}F}\ar[d]_{\nu} &\Qq^+\ar@{^{(}->}[r] & D(\Qq)\ar[dl]^{\pi}\\
\Ss\ar[r]_{f} &\Qq\ar[u]^{\pi_+^{-1}} & 
}
$$
As the map $\pi_*:{\mathcal S}^0(U,D(\Qq))\to{\mathcal S}^0(U,\Qq), F\mapsto\pi\circ F$ is continuous by \cite[Rmk.II.1.5]{sh}, there exists $\delta:U\to\R$ such that if $G\in{\mathcal S}^0(U,D(\Qq))$ and $\|F-G\|<\delta$, then $\|\pi\circ F-\pi\circ G\|<\veps$. Let $G:U\to D(\Qq)$ be a Nash map such that $\|F-G\|<\delta$, so $\|\pi\circ F-\pi\circ G\|<\veps$. As $F|_{\Ss}=\pi_+^{-1}\circ f$, we deduce
$$
\|f-\pi\circ G|_{\Ss}\|=\|\id_{\Qq}\circ f-\pi\circ G|_{\Ss}\|=\|\pi\circ\pi _+^{-1}\circ f-\pi\circ G|_{\Ss}\|=\|\pi\circ F|_{\Ss}-\pi\circ G|_{\Ss}\|<\veps|_{\Ss}=\veps_0,
$$
as required.
\end{proof}

%\section{Declarations}

%\subsection{Funding}

%The first author is supported by GNSAGA of INDAM. The second author is supported by Spanish STRANO PID2021-122752NB-I00. This article has been developed during several one month research stays of the second author in the Dipartimento di Matematica of the Universit\`a di Trento. The second author would like to thank the department for the invitations and the very pleasant working conditions. This article contains a part of the results of the Ph.D. Thesis of the first author written under the supervision of the second author.

%\subsection{Conflicts of interest/Competing interests declaration}

%The corresponding author states that there is no conflict of interest. Competing interests: The authors declares none.

\bibliographystyle{amsalpha}

\begin{thebibliography}{FFQU}

\bibitem[BFR]{bfr} E. Baro, J.F. Fernando, J.M. Ruiz: Approximation on Nash sets with monomial singularities. \em Adv. Math. \em {\bf262} (2014), 59--114.

\bibitem[BM]{bm} E. Bierstone, P.D. Milman: Resolution except for minimal singularities I. {\em Adv. Math.} {\bf231} (2012), no. 5, 3022--3053.

\bibitem[Ca]{ca} A. Carbone: Differentiable approximation of continuous definable maps that preserves the image. {\em Math. Ann.} {\bf391} (2025), no. 1, 717--746.

\bibitem[BCR]{bcr} J. Bochnak, M. Coste, M.-F. Roy: Real algebraic geometry. {\em Ergeb. Math. } {\bf 36}, Springer-Verlag, Berlin (1998).

\bibitem[CF1]{cf1} A. Carbone, J.F. Fernando: Surjective Nash maps between semialgebraic sets. {\em Adv. Math.} {\bf 438} (2024), 109288 (57 pages).

\bibitem[CF2]{cf2} A. Carbone, J.F. Fernando: Nash uniformization of chessboard sets by Nash manifolds with corners. {\em Preprint RAAG} (2026) {\tt arxiv:2306.08093}. 

\bibitem[CF3]{cf3} A. Carbone, J.F. Fernando: Nash approximation of differentiable semialgebraic maps. {\em Preprint RAAG} (2026).

\bibitem[DK]{dk} H. Delfs, M. Knebusch: Separation, retractions and homotopy extension in semialgebraic spaces. {\em Pacific J. Math.} {\bf114} (1984), no. 1, 47--71.

\bibitem[Ef]{ef} G.A. Efroymson: The Extension Theorem for Nash Functions. Real Algebraic Geometry and Quadratic Forms (Rennes, 1981). {\em Lecture Notes in Mathematics}, vol. {\bf959}, Springer, Berlin (1982), 343--357.

\bibitem[Fe1]{fe2} J.F. Fernando: On the substitution theorem for rings of semialgebraic functions. {\em J. Inst. Math. Jussieu} {\bf14} (2015), no. 4, 857--894.

\bibitem[Fe2]{fe3} J.F. Fernando: On Nash images of Euclidean spaces. {\em Adv. Math.} {\bf 331} (2018), 627--719.

\bibitem[Fe3]{fe4} J.F. Fernando: On a Nash curve selection lemma through finitely many points. {\em Rev. Mat. Iberoam.} {\bf41} (2025), no. 4, 1201--1252.

\bibitem[FG1]{fg1} J.F. Fernando, J.M. Gamboa: On the irreducible components of a semialgebraic set. {\em Internat. J. Math.} {\bf23} (2012), no. 4, 1250031, 40 pp.

\bibitem[FG]{fg4} J.F. Fernando, J.M. Gamboa: On open and closed morphisms between semialgebraic sets. \em Proc. Amer. Math. Soc. {\bf140} \em (2012), no. 4, 1207--1219.

\bibitem[FGR]{fgr} J.F. Fernando, J.M. Gamboa, J.M. Ruiz: Finiteness problems on Nash manifolds and Nash sets. \em J. Eur. Math. Soc. \em (JEMS) {\bf16} (2014), no. 3, 537--570.

\bibitem[FGh1]{fgh} J.F. Fernando, R. Ghiloni: Differentiable approximation of continuous semialgebraic maps. {\em Selecta Math. (N.S.)} {\bf25} (2019), no. 3, Paper No. 46, 30 pp. 

\bibitem[FGh2]{fgh2} J.F. Fernando, R. Ghiloni: Smooth approximations in PL geometry. {\em Amer. J. Math.} {\bf144} (2022), no. 4, 967--1007.

\bibitem[FGh3]{fgh3} J.F. Fernando, R. Ghiloni: A converse to Cartan's Theorem B: The extension property for real analytic and Nash sets. {\em Preprint RAAG} (2025). {\tt ArXiv:2506.18347}

\bibitem[FU]{fu6} J.F. Fernando, C. Ueno: On polynomial images of a closed ball. {\em J. Math. Soc. Japan} {\bf 75} (2023), no. 2, 679--733.

\bibitem[H]{hir3} M.W. Hirsch: Differential topology. \em Graduate Texts in Mathematics\em, {\bf 33}. Springer--Verlag, New York--Heidelberg--Berlin (1976).

\bibitem[Mt]{mt} Matsumura, H.: Commutative algebra. Second edition. \em Mathematics Lecture Note Series\em, {\bf56}. Benjamin/Cummings Publishing Co., Inc., Reading, Mass. (1980).

\bibitem[M]{mo} T. Mostowski: Some properties of the ring of Nash functions. {\em Ann. Scuola Norm. Sup. Pisa Cl. Sci.} (4) {\bf3} (1976), no. 2, 245--266.

\bibitem[ORR]{orr} E. Outerelo, J. A. Rojo, and J. M. Ruiz, Topolog\'\i a diferencial: un curso de iniciaci\'on, Sanz y Torres, S.L., Madrid, (2020).

\bibitem[P1]{p1} W. Paw\l ucki: Strict $\Cont^p$-triangulations—a new approach to desingularization. {\em J. Eur. Math. Soc.} (JEMS) {\bf26} (2024), no. 10, 3863–3909.

\bibitem[P2]{p2} W. Paw\l ucki: Strict $\Cont^p$-triangulations of sets locally definable in o-minimal structures with an application to a $\Cont^p$-approximation problem. {\em Rev. Mat. Complut.} {\bf37} (2024), no. 3, 713--722.

\bibitem[Sh]{sh} M. Shiota: Nash manifolds. \em Lecture Notes in Mathematics\em, {\bf 1269}. Springer-Verlag, Berlin (1987).

\bibitem[St]{st} J. Stasica: Smooth points of a semialgebraic set. {\em Ann. Pol. Math.} {\bf82} (2003), no.2, 149--153.

\bibitem[Whi]{whi} U. Whitcher: The Teddy-Lambkin Theorem. {\em Feature column AMS}, April 1st (2025).\\ 
{\tt https://mathvoices.ams.org/featurecolumn/2025/04/01/the-teddy-lambkin-theorem} 

\bibitem[Wh]{wh} H. Whitney: Differentiable manifolds. {\em Ann. of Math.} (2) {\bf37} (1936), no. 3, 645--680.

\end{thebibliography}

\end{document}